\newtheorem{theorem}{Theorem}[section]
\newtheorem{lemma}[theorem]{Lemma}
\newtheorem{remark}{Remark}[section]
\numberwithin{equation}{section}
\begin{document}

\title{Finite many critical problems involving Fractional
Laplacians in $\mathbb{R}^{N}$}




\author{Yu Su}
\address{School of Mathematics and Statistics, Central South University, Changsha, 410083 Hunan, P.R.China.}
\curraddr{}
\email{yizai52@qq.com}
\thanks{}

\author{Haibo Chen}
\address{School of Mathematics and Statistics, Central South University, Changsha, 410083 Hunan, P.R.China.}
\curraddr{}
\email{math\_chb@163.com}
\thanks{This research was supported by National Natural Science Foundation of China 11671403.}

\subjclass[2010]{Primary  35J50; 35J60.}

\keywords{Choquard equation; finite many critical exponents; refined Hardy-Littlewood-Sobolev inequality.}

\date{}

\dedicatory{}

\begin{abstract}
In this paper,
we consider the nonlocal elliptic problems in
$\mathbb{R}^{N}$,
which involve  finite many critical exponents.
By using endpoint refined Hardy--Sobolev inequality,
fractional Coulomb--Sobolev space and variational method,
we
establish the existence of nonnegative solution.
Our results generalize some results obtained by Yang and Wu [Adv. Nonlinear Stud. (2017) \cite{Yang2017}].
\par
\end{abstract}

\maketitle

\section{Introduction}
In this paper,
we  consider the following problems:
$$
(-\Delta)^{s} u
=
\sum_{i=1}^{k}
\left(
\int_{\mathbb{R}^{N}}
\frac{|u|^{2^{*}_{\alpha_{i}}}}{|x-y|^{\alpha_{i}}}
\mathrm{d}y
\right)
|u|^{2^{*}_{\alpha_{i}}-2}u
+
|u|^{2^{*}_{s}-2}u
,
\mathrm{~in~}
\mathbb{R}^{N},
\eqno(\mathcal{P}_{1})
$$
and
$$
(-\Delta)^{s} u
-\frac{\zeta u}{|x|^{2s}}
=
\left(
\int_{\mathbb{R}^{N}}
\frac{|u|^{2^{*}_{\alpha}}}{|x-y|^{\alpha}}
\mathrm{d}y
\right)
|u|^{2^{*}_{\alpha}-2}u
+
\sum_{i=1}^{k}
\frac{|u|^{2^{*}_{s,\theta_{i}}-2}u}
{|x|^{\theta_{i}}}
,
\mathrm{~in~}
\mathbb{R}^{N},
\eqno(\mathcal{P}_{2})
$$
where
$N\geqslant3$,
$s\in(0,1)$,
$\zeta\in
\left[
0,4^{s}\frac{\Gamma(\frac{N+2s}{4})}{\Gamma(\frac{N-2s}{4})}
\right)$,
$\alpha\in(0,N)$,
$2^{*}_{s}=\frac{2N}{N-2s}$
is the critical Sobolev  exponent,
$2^{*}_{s,\theta_{i}}=\frac{2(N-\theta_{i})}{N-2s}$
are the critical Hardy--Sobolev  exponents,
$2^{*}_{\alpha_{i}}=
\frac{2N-\alpha_{i}}{N-2s}$
are the Hardy--Littlewood--Sobolev upper critical exponents,
the parameters
$\alpha_{i}$
and
$\theta_{i}$
satisfy the assumptions:

\noindent
($H_{1}$)
$0<\alpha_{1}<\alpha_{2}<\cdots<\alpha_{k}<N$
($k\in \mathbb{N}$,
$2\leqslant k<\infty$);

\noindent
($H_{2}$)
$0<\theta_{1}<\cdots<\theta_{k}<2s$
($k\in \mathbb{N},~2\leqslant k<\infty$),
and
$2\theta_{k}-\theta_{1}\in(0,2s)$.

The fractional Laplacian $(-\Delta)^{s}$
of a function
$u: \mathbb{R}^{N}\rightarrow\mathbb{R}$
can be defined as
$$(-\Delta)^{s}u=\mathcal{F}^{-1}(|\xi|^{2s}\mathcal{F}(u)(\xi)),~\mathrm{for~all}~\xi\in \mathbb{R}^{N},$$
and for $u\in C^{\infty}_{0}(\mathbb{R}^{N})$,
where $\mathcal{F}(u)$ denotes the Fourier transform of $u$.
The operator
$(-\Delta)^{s}$
in
$\mathbb{R}^{N}$
is a nonlocal pseudo--differential operator taking the form
\begin{equation*}
\begin{aligned}
(-\Delta)^{s}u(x)
=
C_{N,s}\mathrm{P. V.}
\int_{\mathbb{R}^{N}}
\frac{u(x)-u(y)}{|x-y|^{N+2s}}
\mathrm{d}y,
\end{aligned}
\end{equation*}
where
$\mathrm{P. V.}$
is the Cauchy principal value and
$C_{N,s}$
is a normalization constant.
The fractional
power of Laplacian is the infinitesimal generator of L\'{e}vy stable diffusion process and arise
in anomalous diffusion in plasma, population dynamics, geophysical fluid dynamics, flames
propagation, minimal surfaces and game theory
(see
\cite{Applebaum2004,Caffarelli2012,Garroni2005}).

Problem
$(\mathcal{P}_{1})$
and
$(\mathcal{P}_{2})$
are related to the nonlinear Choquard equation as follows:
\begin{equation}\label{1}
-\Delta u
+
V(x)u
=
\left(
|x|^{\alpha}*|u|^{q}
\right)
|u|^{q-2}u,
\mathrm{~in~}
\mathbb{R}^{N},
\end{equation}
where
$\frac{2N-\alpha}{N}\leqslant q\leqslant\frac{2N-\alpha}{N-2}$
and
$\alpha\in(0,N)$.
For
$q=2$
and
$\alpha=1$,
the problem
(\ref{1})
goes back to the description of the quantum theory of a polaron at rest by Pekar in 1954
\cite{Pekar1954}
and the modeling of an electron trapped in its own hole in
1976
in the work of Choquard,
as a certain approximation to Hartree--Fock theory of one--component plasma
\cite{Penrose1996}.
The existence and qualitative properties of solutions of Choquard type equations
(\ref{1})
have been widely studied in the last decades
(see \cite{Moroz2016}).

For Laplacian with nonlocal Hartree type nonlinearities,
Gao and Yang \cite{Gao2016} investigated the following critical Choquard equation:
\begin{equation}\label{2}
\begin{aligned}
-\Delta u
=
\left(
\int_{\mathbb{R}^{N}}
\frac{|u|^{2^{*}_{h,\alpha}}}{|x-y|^{\alpha}}
\mathrm{d}y
\right)
|u|^{2^{*}_{h,\alpha}-2}u
+\lambda u,
\mathrm{~in~}
\Omega,
\end{aligned}
\end{equation}
where
$\Omega$
is a bounded domain of
$\mathbb{R}^{N}$,
with lipschitz boundary,
$N\geqslant3$,
$\alpha\in(0,N)$
and
$\lambda>0$.
By using variational methods,
they established the existence, multiplicity and nonexistence of nontrivial solutions to equation (\ref{2}).
For details and recent works we refer to \cite{O.Alves2017,Gao2017JMAA,Mercuri2016} and the references therein.

For fractional Laplacian with nonlocal Hartree--type nonlinearities,
D'Avenia, Siciliano and Squassina
\cite{d'Avenia2015}
considered the following fractional Choquard equation:
\begin{equation}\label{3}
\begin{aligned}
(-\Delta)^{s} u
+
\omega u
=
\left(
\mathcal{K}_{\alpha}\ast|u|^{q}
\right)
|u|^{q-2}u,
\mathrm{~in~}
\mathbb{R}^{N},
\end{aligned}
\end{equation}
where
$N\geqslant3$,
$s\in(0,1)$,
$\omega\geqslant0$,
$\alpha\in(0,N)$
and
$q\in(\frac{2N-\alpha}{N},\frac{2N-\alpha}{N-2s})$.
In particularly,
when
$\omega=0$,
$\alpha=4s$
and
$q=2$,
then peoblem
(\ref{3})
become a fractional Choquard euqation with upper critical exponent in the sense of Hardy--Littlewood--Sobolev inequatlity as follows:
\begin{equation}\label{4}
\begin{aligned}
(-\Delta)^{s} u
=
\left(
\int_{\mathbb{R}^{N}}
\frac{|u|^{2}}{|x-y|^{4s}}
\mathrm{d}y
\right)
u,
\mathrm{~in~}
\mathbb{R}^{N}.
\end{aligned}
\end{equation}
D'Avenia, Siciliano and Squassina in
\cite{d'Avenia2015}
obtained regularity, existence, nonexistence of nontrivial solutions to problem (\ref{3}) and problem (\ref{4}).
Mukherjee and Sreenadh \cite{Mukherjee2017Fractional}
extended the study of problem \eqref{2} to fractional Laplacian equation.

Recently,
Yang and Wu \cite{Yang2017}
studied the following nonlocal elliptic problems:
\begin{equation}\label{7}
\begin{aligned}
(-\Delta)^{s} u
-\frac{\zeta u}{|x|^{2s}}
=
\left(
\int_{\mathbb{R}^{N}}
\frac{|u|^{2^{*}_{\alpha}}}{|x-y|^{\alpha}}
\mathrm{d}y
\right)
|u|^{2^{*}_{\alpha}-2}u
+
\left(
\int_{\mathbb{R}^{N}}
\frac{|u|^{2^{*}_{\beta}}}{|x-y|^{\beta}}
\mathrm{d}y
\right)
|u|^{2^{*}_{\beta}-2}u
,
\mathrm{~in~}
\mathbb{R}^{N},
\end{aligned}
\end{equation}
and
\begin{equation}\label{8}
\begin{aligned}
(-\Delta)^{s} u
-\frac{\zeta u}{|x|^{2s}}
=
\left(
\int_{\mathbb{R}^{N}}
\frac{|u|^{2^{*}_{\alpha}}}{|x-y|^{\alpha}}
\mathrm{d}y
\right)
|u|^{2^{*}_{\alpha}-2}u
+
\frac{|u|^{2^{*}_{s,\theta}-2}u}{|x|^{\theta}},
\mathrm{~in~}
\mathbb{R}^{N},
\end{aligned}
\end{equation}
where
$N\geqslant3$,
$s\in(0,1)$,
$\zeta\in
\left[
0,4^{s}\frac{\Gamma(\frac{N+2s}{4})}{\Gamma(\frac{N-2s}{4})}
\right)$,
$\alpha,\beta\in(N-2s,N)$,
$\theta\in(0,2s)$,
$2^{*}_{\alpha}=\frac{2N-\alpha}{N-2s}$
and
$2^{*}_{s,\theta}=\frac{2(N-\theta)}{N-2s}$.
Using the refinement of the Sobolev inequality which is related to
the Morrey space,
they showed the existence of nontrivial solutions for problem
(\ref{7})
and
problem
(\ref{8}).
In \cite{ZhangBL2017},
Wang, Zhang and Zhang extended the study of problem (\ref{8}) to the fractional Laplacian system.
By using variational methods,
they investigated the extremals of the corresponding best fractional Hardy--Sobolev constant and established the existence of solutions to the fractional Laplacian system.

Moreover,
there are many other kinds of problem involving two critical nonlinearities,
such as the Laplacian
$-\Delta$
(see \cite{Li2012,Seok2018,Zhong2016}),
the p--Laplacian
$-\Delta_{p}$
(see \cite{Pucci2009}),
the biharmonic operator
$\Delta^{2}$ (see \cite{Bhakta2015}),
and the fractional operator
$(-\Delta)^{s}$
(see \cite{Ghoussoub2016,Chen2018}).

There are two questions arise:

{\bf Question 1:
For
$\zeta=0$,
can we extend the study of problem
(\ref{7})
in the finite many critical nonlinearities?}

{\bf Question 2:
Can we extend the studies of problem
(\ref{8})
and
problem
(\ref{3})
in the finite many critical nonlinearities?}

We answer above questions in this paper.
To our knowledge, there are no results in these senses.

The variational approach that we adopt here, relies on the following
inequalities:
\begin{lemma}\label{lemma1}
$\left.\right.$\cite[Hardy-Littlewood-Sobolev~inequality]{Lieb2001}
Let
$t,r>1$
and
$0<\alpha<N$
with
$\frac{1}{t}+\frac{1}{r}+\frac{\alpha}{N}=2$,
$f\in L^{t}(\mathbb{R}^{N})$
and
$h\in L^{r}(\mathbb{R}^{N})$.
There exists a sharp constant
$C(N,\alpha,t,r)>0$,
independent of
$f,g$
such that
$$\int_{\mathbb{R}^{N}}\int_{\mathbb{R}^{N}}
\frac{|f(x)||h(y)|}
{|x-y|^{\alpha}}
\mathrm{d}x\mathrm{d}y
\leqslant
C(N,\alpha,t,r)
\|f\|_{t}
\|h\|_{r}.$$
If
$t=r=\frac{2N}{2N-\alpha}$,
then
$$C(N,\alpha,t,r)
=
C(N,\alpha)
=
\pi^{\frac{\alpha}{2}}
\frac{\Gamma(\frac{N}{2}-\frac{\alpha}{2})}{\Gamma(N-\frac{\alpha}{2})}
\left\{
\frac{\Gamma(\frac{N}{2})}{\Gamma(N)}
\right\}^{\frac{\alpha-N}{N}}.$$
\end{lemma}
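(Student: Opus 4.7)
The plan is to first establish the inequality with \emph{some} finite constant $C(N,\alpha,t,r)$ and then, in the diagonal case $t=r=\frac{2N}{2N-\alpha}$, identify the sharp constant and its extremizers.

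For the non-sharp version, I would view the left-hand side as a duality pairing $\langle f, I_{\alpha} h\rangle$, where $I_\alpha h(x) := \int_{\mathbb{R}^N} |x-y|^{-\alpha} h(y)\,dy$ is (up to a normalizing factor) the Riesz potential. The kernel $|x|^{-\alpha}$ belongs to the weak-$L^{N/\alpha}$ space $L^{N/\alpha,\infty}(\mathbb{R}^{N})$, with an explicit weak-type norm computable from the measure of a ball. Applying the generalized Young inequality (O'Neil / Hunt) in Lorentz spaces, $L^{r}\ast L^{N/\alpha,\infty}\hookrightarrow L^{t'}$ whenever $\frac{1}{r}+\frac{\alpha}{N}=1+\frac{1}{t'}$, which is exactly the condition $\frac{1}{t}+\frac{1}{r}+\frac{\alpha}{N}=2$ after using $\frac{1}{t}+\frac{1}{t'}=1$. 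Hölder's inequality between $f\in L^{t}$ and $I_\alpha h\in L^{t'}$ then delivers the estimate with a finite (but not sharp) constant. Alternatively, one can run a layer-cake decomposition $f=\sum_{k\in\mathbb{Z}}f_k$, $h=\sum_{j\in\mathbb{Z}}h_j$ on dyadic level sets and estimate each bilinear piece by Hölder, summing a geometric series in $j-k$; the critical scaling $\frac{1}{t}+\frac{1}{r}+\frac{\alpha}{N}=2$ is exactly what makes this double sum converge.

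For the diagonal sharp-constant case I would follow Lieb's rearrangement-and-conformal-invariance strategy. By the Riesz rearrangement inequality,
\begin{equation*}
\iint \frac{f(x)h(y)}{|x-y|^{\alpha}}\,dx\,dy
\;\leqslant\;
\iint \frac{f^{*}(x)h^{*}(y)}{|x-y|^{\alpha}}\,dx\,dy,
\end{equation*}
while $\|f^{*}\|_{t}=\|f\|_{t}$ and similarly for $h$. Hence it suffices to prove the inequality and locate extremizers among nonnegative radially symmetric nonincreasing functions. In the conformally invariant case $t=r=\frac{2N}{2N-\alpha}$, both sides transform covariantly under the conformal group of $\mathbb{R}^{N}$ (translations, dilations, and the inversion $x\mapsto x/|x|^{2}$); composing with stereographic projection $\mathbb{R}^{N}\to S^{N}$ recasts the functional on the sphere, where compactness and the transitive action of the conformal group can be exploited to show that radial decreasing maximizers in $\mathbb{R}^{N}$ must be of the form $(a+b|x-x_{0}|^{2})^{-(2N-\alpha)/2}$ after a conformal change of variables.

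Plugging such an extremizer, say $f(x)=h(x)=(1+|x|^{2})^{-(2N-\alpha)/2}$, into both sides and computing explicitly with the Beta function and the formulas $\int_{\mathbb{R}^{N}}(1+|x|^{2})^{-s}\,dx=\pi^{N/2}\Gamma(s-N/2)/\Gamma(s)$ and an analogous integral for the convolution $|x|^{-\alpha}\ast(1+|x|^{2})^{-(2N-\alpha)/2}$, yields the stated value
\begin{equation*}
C(N,\alpha)=\pi^{\alpha/2}\,\frac{\Gamma\bigl(\tfrac{N-\alpha}{2}\bigr)}{\Gamma\bigl(N-\tfrac{\alpha}{2}\bigr)}\left\{\frac{\Gamma(N/2)}{\Gamma(N)}\right\}^{\frac{\alpha-N}{N}}.
\end{equation*}

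The main obstacle is not the qualitative inequality, which follows routinely from Lorentz-space Young or layer-cake, but the sharp constant: to show that a maximizing sequence (under rearrangement) actually converges up to the conformal group requires a compactness argument (concentration-compactness or direct use of conformal invariance on $S^{N}$) together with a rigorous characterization of extremals via the Euler--Lagrange equation $I_{\alpha}u = u^{p-1}$ for $p=2N/(2N-\alpha)$. Since the entire statement is classical and is attributed to Lieb and Loss, for the purposes of this paper I would simply cite \cite{Lieb2001} and omit the proof.
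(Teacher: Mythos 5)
The paper offers no proof of this lemma at all---it is stated as a classical result and attributed directly to Lieb--Loss \cite{Lieb2001}---and your proposal ultimately does the same thing, so you are in agreement with the paper's approach. Your accompanying sketch (weak-Young/Lorentz or layer-cake for the rough bound, then Riesz rearrangement plus conformal invariance and the explicit extremizer $(1+|x|^{2})^{-(2N-\alpha)/2}$ for the sharp diagonal constant) is a faithful outline of the standard Lieb--Loss argument and reproduces the stated value of $C(N,\alpha)$ correctly.
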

\begin{lemma}\label{lemma2}
$\left.\right.$\cite[Endpoint refined Sobolev inequality]{Bellazzini2016}
Let
$s\in(0,\frac{N}{2})$
and
$\alpha\in(0,N)$.
Then there exists a  constant
$C_{1}>0$
such that the inequality
$$
\|u\|_{L^{2^{*}_{s}}(\mathbb{R}^{N})}
\leqslant
C_{1}
\|u\|_{D}^{\frac{(N-\alpha)(N-2s)}{N(N+2s-\alpha)}}
\left(
\int_{\mathbb{R}^{N}}
\int_{\mathbb{R}^{N}}
\frac{|u(x)|^{2^{*}_{\alpha}}|u(y)|^{2^{*}_{\alpha}}}{|x-y|^{\alpha}}
\mathrm{d}x
\mathrm{d}y
\right)
^{\frac{s(N-2s)}{N(N+2s-\alpha)}},$$
holds for all
$u\in \mathcal{E}^{s,\alpha,2^{*}_{\alpha}}(\mathbb{R}^{N})$.
\end{lemma}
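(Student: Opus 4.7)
The plan is to combine a Littlewood--Paley refinement of the Sobolev embedding with an auxiliary bound that transfers the weak Besov quantity into the Choquard bilinear integral on the right-hand side. I would follow the G\'erard--Meyer--Oru / Bahouri--G\'erard strategy, adapted to accommodate the nonlocal Hardy--Littlewood--Sobolev term, and let scaling dictate how the exponents must match in the final combination.

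First, I would fix a homogeneous Littlewood--Paley resolution and write $u = \sum_{j \in \mathbb{Z}} \Delta_{j} u$, with $\Delta_{j} u$ Fourier-supported in $\{|\xi| \sim 2^{j}\}$. Bernstein's inequality yields $\|\Delta_{j} u\|_{L^{2}} \leq c\, 2^{-js} \|u\|_{D}$ and $\|\Delta_{j} u\|_{L^{\infty}} \leq c\, 2^{j(N-2s)/2} \|u\|_{\dot{B}^{-(N-2s)/2}_{\infty,\infty}}$. For each level $t > 0$, I would split $u$ at a threshold $J(t)$ into low- and high-frequency parts, choose $J(t)$ so that the low-frequency piece is pointwise $\leq t$, and estimate $|\{|u|>2t\}|$ via Chebyshev on the $L^{2}$ norm of the high-frequency tail. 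Optimising $J$ and integrating in $t$ produces the classical refined Sobolev inequality
\begin{equation*}
\|u\|_{L^{2^{*}_{s}}} \leq C \, \|u\|_{D}^{(N-2s)/N} \, \|u\|_{\dot{B}^{-(N-2s)/2}_{\infty,\infty}}^{2s/N}.
\end{equation*}

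The second ingredient is an interpolation bound of the form
\begin{equation*}
\|u\|_{\dot{B}^{-(N-2s)/2}_{\infty,\infty}}^{2 \cdot 2^{*}_{\alpha}} \leq C \, \|u\|_{D}^{a} \int_{\mathbb{R}^{N}} \int_{\mathbb{R}^{N}} \frac{|u(x)|^{2^{*}_{\alpha}} |u(y)|^{2^{*}_{\alpha}}}{|x-y|^{\alpha}} \, dx\, dy,
\end{equation*}
with $a$ determined by the joint scalings $u \mapsto \mu u(\lambda \cdot)$. The underlying pointwise content is that a large value $M = |\Delta_{j} u(x_{0})|$ forces $|u|$ to remain $\gtrsim M$ on $B_{2^{-j}}(x_{0})$ (up to an $\|u\|_{D}$-controlled correction via the fractional Poincar\'e inequality on balls), which contributes at least $c\, M^{2 \cdot 2^{*}_{\alpha}} 2^{-j(2N-\alpha)}$ to the bilinear Choquard integral. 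Substituting this into the classical refined Sobolev inequality of the previous paragraph and simplifying exponents via the identity $(N-\alpha)(N-2s) + 2s(2N-\alpha) = N(N+2s-\alpha)$ yields the exponents $\theta = (N-\alpha)(N-2s)/[N(N+2s-\alpha)]$ on $\|u\|_{D}$ and $\sigma = s(N-2s)/[N(N+2s-\alpha)]$ on the bilinear integral, matching the statement.

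The principal obstacle is this second ingredient: producing a genuine pointwise-to-bilinear estimate rather than a merely scale-consistent one. Scaling forces the exponent relations, but showing that the Besov $\dot{B}^{-(N-2s)/2}_{\infty,\infty}$ seminorm genuinely captures the concentration mass detected by the Choquard nonlocal integral is the delicate analytic step. The high/low-frequency balancing argument in the first half and the final exponent arithmetic are essentially bookkeeping once the concentration lemma connecting Besov and Choquard quantities is in hand.
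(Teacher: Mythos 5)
First, note that the paper itself does not prove this lemma: it is quoted verbatim from \cite{Bellazzini2016}, so the only question is whether your argument is sound on its own terms. It is not — the two ingredients you propose, even granting both, combine to a strictly weaker inequality than the one stated. Write $D(u)=\int\int\frac{|u(x)|^{2^{*}_{\alpha}}|u(y)|^{2^{*}_{\alpha}}}{|x-y|^{\alpha}}\,\mathrm{d}x\,\mathrm{d}y$. Since $\|u\|_{D}$, the seminorm $\|u\|_{\dot B^{-(N-2s)/2}_{\infty,\infty}}$ and $D(u)$ are all invariant under $u\mapsto\lambda^{(N-2s)/2}u(\lambda\cdot)$, the only constraint on your second ingredient comes from $u\mapsto\mu u$, which forces $a=0$; so at best you obtain $\|u\|_{\dot B^{-(N-2s)/2}_{\infty,\infty}}\leqslant C\,D(u)^{\frac{1}{2\cdot 2^{*}_{\alpha}}}$. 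Substituting this into the G\'erard--Meyer--Oru inequality gives
\begin{equation*}
\|u\|_{L^{2^{*}_{s}}}\leqslant C\|u\|_{D}^{\frac{N-2s}{N}}\,D(u)^{\frac{2s}{N}\cdot\frac{N-2s}{2(2N-\alpha)}}=C\|u\|_{D}^{\frac{N-2s}{N}}\,D(u)^{\frac{s(N-2s)}{N(2N-\alpha)}},
\end{equation*}
whereas the lemma asserts the exponents $\frac{(N-\alpha)(N-2s)}{N(N+2s-\alpha)}$ and $\frac{s(N-2s)}{N(N+2s-\alpha)}$. Since $N+2s-\alpha<2N-\alpha$, the stated exponent on $D(u)$ is strictly larger, and because $D(u)^{1/(2\cdot 2^{*}_{\alpha})}\leqslant C\|u\|_{L^{2^{*}_{s}}}^{2}\leqslant C\|u\|_{D}^{2}$ (Hardy--Littlewood--Sobolev plus Sobolev), the inequality with the larger $D(u)$-exponent is strictly stronger; it cannot be recovered from yours by interpolating with these trivial bounds, which go the wrong way. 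The identity $(N-\alpha)(N-2s)+2s(2N-\alpha)=N(N+2s-\alpha)$ only verifies degree-one homogeneity of the target; it does not make the substitution close. A concrete test: for $u=\sum_{i=1}^{K}\phi(\cdot-x_{i})$ with widely separated centers, $\|u\|_{D}^{2}\sim K$, $\|u\|_{L^{2^{*}_{s}}}^{2^{*}_{s}}\sim K$, $D(u)\sim K$; the stated inequality is saturated (both sides $\sim K^{\frac{N-2s}{2N}}$), while your right-hand side grows like $K^{\frac{(N-2s)(2N-\alpha+2s)}{2N(2N-\alpha)}}$, strictly faster. So your route necessarily loses the endpoint.

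The obstruction is structural: any factorization through a translation- and dilation-invariant intermediate norm $\|\cdot\|_{X}$ satisfying $\|u\|_{L^{2^{*}_{s}}}\lesssim\|u\|_{D}^{\iota}\|u\|_{X}^{1-\iota}$ with the minimal admissible $\iota=\frac{2}{2^{*}_{s}}$ (one cannot go below it, cf.\ Lemma \ref{lemma4}) together with $\|u\|_{X}\lesssim D(u)^{\frac{1}{2\cdot 2^{*}_{\alpha}}}$ caps out at the exponent $\frac{s(N-2s)}{N(2N-\alpha)}$. The actual proof in \cite{Bellazzini2016} (and its $s=1$ antecedent in \cite{Mercuri2016}) does not factor this way: it is a direct covering/truncation argument in which, ball by ball, the Dirichlet and Coulomb energies are played against each other, and it is this interaction that produces the mixed denominator $N+2s-\alpha$. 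A secondary, repairable issue: your claim that $|\Delta_{j}u(x_{0})|=M$ forces $|u|\gtrsim M$ throughout $B_{2^{-j}}(x_{0})$ is false because of high-frequency cancellation; only the average $|B|^{-1}\int_{B}|u|$ is bounded below by $M$ up to tails, which by Jensen's inequality still yields the $a=0$ bound — but that does not affect the main point, which is that the exponents do not match.
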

In  particular,
the Coulomb--Sobolev space and endpoint refined Sobolev inequality play the key roles in this paper.
For
$s=1$,
Mercuri, Moroz and Schaftingen \cite{Mercuri2016} introduced the Coulomb--Sobolev space and a family of associated optimal interpolation inequalities (include endpoint refined Sobolev inequality).
They studied the existence of solutions of the nonlocal Schr\"{o}dinger--Poisson--Slater type equation by Coulomb--Sobolev space and endpoint refined Sobolev inequality.
For $s\not=1$,
Bellazzini, Ghimenti, Mercuri, Moroz and Schaftingen \cite{Bellazzini2016} studied the fractional Coulomb--Sobolev space and endpoint refined Sobolev inequality.

The first
result of this paper is as follows.
\begin{theorem}\label{theorem1}
Let
$N\geqslant3$,
$s\in(0,1)$
and
$( H_{1})$
hold.
Then
problem
$(\mathcal{P}_{1})$
has a nonnegative solution $\bar{v}(x)$.
Moreover,
set
\begin{equation*}
\begin{aligned}
\bar{\bar{v}}(x)
=
\frac{1}{|x|^{N-2s}}
\bar{v}
\left(
\frac{x}{|x|^{2}}
\right).
\end{aligned}
\end{equation*}
Then
$\bar{\bar{v}}(x)$
is a nonnegative solution of the problem
\begin{equation*}
\begin{aligned}
(-
\Delta)^{s}
\bar{\bar{v}}
=&
\sum\limits^{k}_{i=1}
\left(
\int_{\mathbb{R}^{N}}
\frac{|\bar{\bar{v}}|^{\frac{2N-\alpha_{i}}{N-2s}}}
{|x-y|^{\alpha_{i}}}
\mathrm{d}y
\right)
\left|
\bar{\bar{v}}
\right|^{\frac{4s-\alpha_{i}}{N-2s}}
\bar{\bar{v}}
+
\left|
\bar{\bar{v}}
\right|^{\frac{4s}{N-2s}}
\bar{\bar{v}}
,
~\mathrm{in}~\mathbb{R}^{N}\backslash\{0\}.
\end{aligned}
\end{equation*}
\end{theorem}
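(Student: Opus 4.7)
\medskip
\noindent\textbf{Proof proposal.}
The plan is to split the statement into two independent tasks: (i) produce a nonnegative solution $\bar v$ of $(\mathcal{P}_1)$ by a mountain pass / concentration--compactness argument based on Lemmas \ref{lemma1} and \ref{lemma2}, and (ii) verify the transformation formula by direct substitution, using the conformal covariance of $(-\Delta)^{s}$ under the Kelvin inversion.

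For part (i) I would work on the homogeneous fractional Sobolev space $\dot H^{s}(\mathbb{R}^{N})$ with norm $\|u\|_{D}^{2}=\int|(-\Delta)^{s/2}u|^{2}\,dx$, intersected where necessary with the Coulomb--Sobolev space $\mathcal{E}^{s,\alpha_{i},2^{*}_{\alpha_{i}}}(\mathbb{R}^{N})$ to give sense to each Hartree term. Define
\[
I_{1}(u)=\tfrac12\|u\|_{D}^{2}-\sum_{i=1}^{k}\tfrac{1}{2\cdot 2^{*}_{\alpha_{i}}}\iint\tfrac{|u(x)|^{2^{*}_{\alpha_{i}}}|u(y)|^{2^{*}_{\alpha_{i}}}}{|x-y|^{\alpha_{i}}}\,dxdy-\tfrac{1}{2^{*}_{s}}\int|u|^{2^{*}_{s}}\,dx.
\]
Using Lemma \ref{lemma1} together with the critical fractional Sobolev embedding $\dot H^{s}\hookrightarrow L^{2^{*}_{s}}$, and noting that every nonlinear term is homogeneous of degree $>2$, I would verify the classical mountain pass geometry and obtain a Palais--Smale sequence $\{u_{n}\}$ at the mountain pass level $c_{1}>0$. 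The standard Pohozaev/Nehari identities then show boundedness of $\{u_{n}\}$ in $\dot H^{s}$; after passing to a subsequence, let $u_{n}\rightharpoonup\bar v$ weakly.

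The delicate point is nontriviality and compactness: all $k+1$ nonlinearities are critical and the equation is invariant under the scaling $u\mapsto\lambda^{(N-2s)/2}u(\lambda\,\cdot\,)$ and under translations, so one must prevent both vanishing and escape to infinity. Here Lemma \ref{lemma2} is decisive; by the standard dichotomy it forces each non-vanishing bounded sequence to have a nontrivial renormalization, so after a suitable shift and rescaling one may assume $\bar v\not\equiv 0$. Sharpening the mountain pass level below the explicit threshold coming from the Talenti-type extremals (in the spirit of Yang and Wu \cite{Yang2017}), one rules out concentration loss and obtains $u_{n}\to\bar v$ strongly in the appropriate norms via a Brezis--Lieb splitting applied to each of the $k+1$ critical integrals. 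Replacing the nonlinearities by their positive parts, or testing the equation against $\bar v^{-}$, yields $\bar v\geqslant 0$. This existence/compactness step is the main obstacle.

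For part (ii), I would rely on the conformal covariance of the fractional Laplacian under the Kelvin transform $u\mapsto u^{*}(x)=|x|^{-(N-2s)}u(x/|x|^{2})$, namely
\[
(-\Delta)^{s}u^{*}(x)=|x|^{-(N+2s)}\bigl[(-\Delta)^{s}u\bigr]\!\left(x/|x|^{2}\right),\qquad x\in\mathbb{R}^{N}\setminus\{0\}.
\]
Setting $y=x/|x|^{2}$ and $\bar{\bar v}=\bar v^{*}$, I would then substitute the equation satisfied by $\bar v$ into the right-hand side. For the local critical term, from $\bar v(y)=|x|^{N-2s}\bar{\bar v}(x)$ one computes $|x|^{-(N+2s)}|\bar v(y)|^{2^{*}_{s}-2}\bar v(y)=|\bar{\bar v}(x)|^{4s/(N-2s)}\bar{\bar v}(x)$, since $(N-2s)(2^{*}_{s}-1)=N+2s$. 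For each Hartree term I would change variables $z=w/|w|^{2}$ in the convolution integral and use the two identities $|y-z|=|x-w|/(|x||w|)$ and $dz=|w|^{-2N}dw$; the precise value $2^{*}_{\alpha_{i}}=(2N-\alpha_{i})/(N-2s)$ makes the power of $|w|$ collapse to $0$, producing a clean $|x|^{\alpha_{i}}$ factor. Combining this with the identity $(N-2s)(2^{*}_{\alpha_{i}}-1)=N+2s-\alpha_{i}$ for the pointwise factor $|\bar v(y)|^{2^{*}_{\alpha_{i}}-2}\bar v(y)$ cancels the remaining weight $|x|^{-(N+2s)}$ exactly, giving the claimed equation on $\mathbb{R}^{N}\setminus\{0\}$. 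Nonnegativity of $\bar{\bar v}$ is immediate from that of $\bar v$.
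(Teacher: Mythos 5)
Your proposal follows essentially the same route as the paper: a mountain pass argument with the level kept below the threshold determined by the best constants, non-vanishing of the Palais--Smale sequence enforced through the endpoint refined Sobolev inequality of Lemma \ref{lemma2} (combined in the paper with the Coulomb--Sobolev space properties of Lemma \ref{lemma6} and the Morrey-space refinement of Lemmas \ref{lemma4}--\ref{lemma5} to locate the concentration scales $\sigma_{n}$ and centers $x_{n}$), then rescaling, Br\'ezis--Lieb splitting for strong convergence, truncation by $\bar v^{-}$ for nonnegativity, and finally the fractional Kelvin transform with exactly the identities you list. Your exponent arithmetic for the Kelvin step is correct, so no substantive comparison is needed beyond noting that the ``standard dichotomy'' you invoke is implemented in the paper via the Morrey-norm lower bound.
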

\begin{remark}
Problem
$(\mathcal{P}_{1})$ is invariant under the weighted dilation
$$u\mapsto \tau^{\frac{N-2s}{2}}u(\tau x).$$
Therefore, it is well known that the mountain pass theorem does not yield critical points,
but only the Palais--Smale sequences.
In this type of situation,
it is necessary to show the non--vanishing of Palais--Smale sequences.
There are finite many Hardy--Littlewood--Sobolev critical exponents in problem
$(\mathcal{P}_{1})$,
it is difficult to show the non--vanishing of Palais--Smale sequences.
By using fractional Coulomb--Sobolev space, endpoint refined Sobolev inequality and Lemma \ref{lemma6},
we overcome this difficult in Lemma \ref{lemma19}.
\end{remark}
The second
result of this paper is as follows.
\begin{theorem}\label{theorem2}
Let
$N\geqslant3$,
$\alpha\in(0,   N)$,
$s\in(0,1)$
and
$( H_{2})$
hold.
Then
problem
$(\mathcal{P}_{2})$
has a nonnegative solution $\tilde{u}(x)$.
Moreover,
set
\begin{equation*}
\begin{aligned}
\tilde{\tilde{u}}(x)
=
\frac{1}{|x|^{N-2s}}
\tilde{u}
\left(
\frac{x}{|x|^{2}}
\right).
\end{aligned}
\end{equation*}
Then
$\tilde{\tilde{u}}(x)$
is a nonnegative solution of the problem
\begin{equation*}
\begin{aligned}
(-
\Delta)^{s}
\tilde{\tilde{u}}
-\zeta
\frac{\bar{\bar{v}}}{|x|^{2s}}
=&
\left(
\int_{\mathbb{R}^{N}}
\frac{|\tilde{\tilde{u}}|^{\frac{2N-\alpha}{N-2s}}}
{|x-y|^{\alpha}}
\mathrm{d}y
\right)
\left|
\tilde{\tilde{u}}
\right|^{\frac{4s-\alpha}{N-2s}}
\tilde{\tilde{u}}
+
\sum\limits^{k}_{i=1}
\frac{
\left|
\tilde{\tilde{u}}
\right|^{\frac{4s-2\theta_{i}}{N-2s}}
\tilde{\tilde{u}}}
{|x|^{\theta_{i}}},
~\mathrm{in}~\mathbb{R}^{N}\backslash\{0\}.
\end{aligned}
\end{equation*}
\end{theorem}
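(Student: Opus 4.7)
The plan is to adapt the variational approach used for Theorem~\ref{theorem1} to the Hardy-corrected setting of $(\mathcal{P}_2)$, and then deduce the transformed equation for $\tilde{\tilde u}$ from the conformal covariance of the Kelvin inversion.

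\emph{Variational setup.} I would work on $D^{s,2}(\mathbb{R}^N)$ equipped with the Hardy-corrected inner product
\begin{equation*}
\langle u,v\rangle_\zeta := \int_{\mathbb{R}^N} (-\Delta)^{s/2}u\,(-\Delta)^{s/2}v \, dx - \zeta \int_{\mathbb{R}^N} \frac{uv}{|x|^{2s}}\, dx,
\end{equation*}
which is equivalent to the standard norm since the assumed upper bound on $\zeta$ is exactly the sharp fractional Hardy constant. The associated energy is
\begin{equation*}
J(u) = \tfrac{1}{2}\|u\|_\zeta^2 - \tfrac{1}{2\cdot 2^*_\alpha} \int_{\mathbb{R}^N}\!\int_{\mathbb{R}^N} \frac{|u(x)|^{2^*_\alpha}|u(y)|^{2^*_\alpha}}{|x-y|^\alpha}\,dx\,dy - \sum_{i=1}^{k}\tfrac{1}{2^*_{s,\theta_i}} \int_{\mathbb{R}^N}\frac{|u|^{2^*_{s,\theta_i}}}{|x|^{\theta_i}}\,dx.
\end{equation*}
Lemma~\ref{lemma1} controls the Choquard nonlinearity and the fractional Hardy--Sobolev inequality controls each weighted critical term, which together give the mountain pass geometry for $J$.

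\emph{Palais--Smale analysis.} A standard deformation yields a $(PS)_c$ sequence $\{u_n\}$ at the mountain pass level $c$. By testing the MP path with a rescaled instanton associated with the sharpest critical exponent I would show that $c$ lies strictly below the threshold built from the best constants of Lemma~\ref{lemma1} and of the fractional Hardy--Sobolev inequality. The main obstacle is then to rule out vanishing of $\{u_n\}$, the analogue for $(\mathcal{P}_2)$ of Lemma~\ref{lemma19}. For this I would place $\{u_n\}$ in the fractional Coulomb--Sobolev space and apply the endpoint refined Sobolev inequality of Lemma~\ref{lemma2}. The interpolation condition $2\theta_k-\theta_1\in(0,2s)$ in $(H_2)$ is precisely what permits the several Hardy--Sobolev densities to be handled simultaneously together with the Choquard density in the Coulomb--Sobolev norm. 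A Brezis--Lieb splitting of each critical term then produces a nontrivial weak limit $\tilde u$ that solves $(\mathcal{P}_2)$ weakly, and testing with $\tilde u^{-}$ yields $\tilde u\geq 0$.

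\emph{Kelvin inversion.} The second assertion rests on the conformal covariance
\begin{equation*}
(-\Delta)^s \tilde{\tilde u}(x) = |x|^{-(N+2s)} \bigl[(-\Delta)^s \tilde u\bigr]\!\left(\tfrac{x}{|x|^2}\right), \qquad x\in\mathbb{R}^N\setminus\{0\},
\end{equation*}
together with routine changes of variable in each nonlinear term. Substituting $y=\eta/|\eta|^2$ in the Choquard convolution and using the inversion identity $\bigl|x-\eta/|\eta|^2\bigr| = |x|\,\bigl|x/|x|^2-\eta\bigr|/|\eta|$ together with the exponent relation $(N-2s)\,2^*_\alpha = 2N-\alpha$ shows that the Choquard nonlinearity evaluated at $\tilde{\tilde u}$ and $x$ equals $|x|^{-(N+2s)}$ times the same nonlinearity evaluated at $\tilde u$ and $x/|x|^2$. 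Analogous algebraic balances, based on $(N-2s)\,2^*_{s,\theta_i} = 2(N-\theta_i)$ for the Hardy--Sobolev terms and on $(N-2s)+2s = N$ for the Hardy term, produce the same $|x|^{-(N+2s)}$ prefactor. Matching term by term with the equation satisfied by $\tilde u$ at $x/|x|^2$ gives the claimed equation for $\tilde{\tilde u}$ on $\mathbb{R}^N\setminus\{0\}$.
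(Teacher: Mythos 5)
Your overall strategy (Hardy--corrected norm, mountain pass below a threshold built from $S_{\zeta,\alpha}$ and $H_{\zeta,\theta_i}$, non--vanishing via the fractional Coulomb--Sobolev space and the endpoint refined inequality, Kelvin inversion at the end) is the same as the paper's, and your Kelvin computation, including the inversion identity for $|x-y|^{-\alpha}$ and the exponent bookkeeping, matches the paper's equations (4.47)--(4.50). However, there is a genuine gap in the compactness step. The non--vanishing of the nonlinear terms (the analogue of Lemma \ref{lemma16}) only gives a lower bound on the Morrey norm $\|u_n\|_{\mathcal{L}^{2,N-2s}}$, hence concentration parameters $(x_n,\sigma_n)$ such that the rescaled sequence $\bar u_n(x)=\sigma_n^{(N-2s)/2}u_n(x_n+\sigma_n x)$ has a nontrivial weak limit. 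The weak limit of the \emph{original} sequence $u_n$ may well be zero. Because the Hardy potential $\zeta|x|^{-2s}$ and the weights $|x|^{-\theta_i}$ are not translation invariant, if $|x_n|/\sigma_n\to\infty$ the recentered limit $\bar u$ solves not $(\mathcal{P}_2)$ but the translation-invariant limiting equation $(-\Delta)^s u=(I_\alpha*|u|^{2^*_\alpha})|u|^{2^*_\alpha-2}u$ (the functional $I_3$ in the paper), and your Brezis--Lieb splitting cannot by itself return you to a solution of $(\mathcal{P}_2)$. Your sentence ``a Brezis--Lieb splitting \dots\ produces a nontrivial weak limit $\tilde u$ that solves $(\mathcal{P}_2)$ weakly'' silently assumes this dichotomy away.

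The paper closes this gap with two dedicated ingredients that your proposal omits: Lemma \ref{lemma17}, which computes the ground-state level of the limiting problem as $c_0^3=\frac{N+2s-\alpha}{2(2N-\alpha)}S_{0,\alpha}^{\frac{2N-\alpha}{N+2s-\alpha}}$ and shows $c_0^3>c_0^2$; and Step 2 of the proof, which uses this strict inequality together with the Brezis--Lieb lemma to derive a contradiction from $|x_n|/\sigma_n\to\infty$, thereby forcing $\{x_n/\sigma_n\}$ to be bounded. Only then can one rescale purely by dilations ($\tilde u_n(x)=\sigma_n^{(N-2s)/2}u_n(\sigma_n x)$), under which $I_2$ \emph{is} invariant, obtain a nontrivial weak limit solving $(\mathcal{P}_2)$, and upgrade to strong convergence. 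You need to supply this energy comparison with the limiting Choquard problem (or an equivalent mechanism excluding escape of the concentration points) for the argument to be complete.
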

\begin{remark}
This paper not only extends the studies of problem (\ref{7}) and problem (\ref{8}) in the finite many critical nonlinearities,
but also extends $\alpha\in(N-2s,N)$ to $\alpha\in(0,N)$.
In \cite{ZhangBL2017} and \cite{Yang2017},
the authors just studied the case of $\alpha\in(N-2s,N)$.
It is nature to ask the case of $\alpha\in(0,N-2s)$.
In order to overcome this difficult,
we show the refinement of Hardy--Littlewood--Sobolev inequality for the case of $\alpha\in(0,N)$ (see Lemma \ref{lemma5}),
and
the endpoint refined Hardy--Sobolev inequality  (see Lemma \ref{lemma7}).
\end{remark}
\section{Preliminaries}
The space
$H^{s}(\mathbb{R}^{N})$
is defined as
$$H^{s}(\mathbb{R}^{N})=\{u\in L^{2}(\mathbb{R}^{N})
|(-\Delta)^{\frac{s}{2}}u\in L^{2}(\mathbb{R}^{N})\}.$$
This space is endowed with the norm
$$\|u\|_{H}^{2}=\|(-\Delta)^{\frac{s}{2}}u\|_{2}^{2}+\|u\|_{2}^{2}.$$
The space
$D^{s,2}(\mathbb{R}^{N})$
is the completion of
$C^{\infty}_{0}(\mathbb{R}^{N})$
with respect to the norm
$$
\|u\|_{D}^{2}=\|(-\Delta)^{\frac{s}{2}}u\|_{2}^{2}.
$$
It is well known that
$\Lambda=4^{s}\frac{\Gamma^{2}(\frac{N+2s}{4})}{\Gamma^{2}(\frac{N-2s}{4})}$
is the best constant in the Hardy inequality
$$
\Lambda
\int_{\mathbb{R}^{N}}
\frac{ u^{2}}{|x|^{2s}}
\mathrm{d}x
\leqslant
\|u\|_{D}^{2}
,~~
\mathrm{for~any~}
u\in
D^{s,2}(\mathbb{R}^{N}).
$$
By Hardy inequality and $\zeta\in[0,\Lambda)$,
we derive that
$$
\|u\|_{\zeta}^{2}
=
\|u\|_{D}^{2}
-
\zeta
\int_{\mathbb{R}^{N}}
\frac{ |u|^{2}}{|x|^{2s}}
\mathrm{d}x,
$$
is an equivalent norm in
$D^{s,2}(\mathbb{R}^{N})$,
since the following inequalities hold:
$$\left(1-\frac{\zeta}{\Lambda}\right)\|u\|_{D}^{2}\leqslant\|u\|_{\zeta}^{2}\leqslant\|u\|_{D}^{2}.$$
For
$\alpha\in(0,N)$
and
$s\in(0,1)$,
the fractional Coulomb--Sobolev space \cite{Bellazzini2016} is defined by
$$
\mathcal{E}^{s,\alpha,2^{*}_{\alpha}}(\mathbb{R}^{N})=
\left\{
\|u\|_{D}<\infty
~\mathrm{and}~
\int_{\mathbb{R}^{N}}
\int_{\mathbb{R}^{N}}
\frac{|u(x)|^{2^{*}_{\alpha}}|u(y)|^{2^{*}_{\alpha}}}{|x-y|^{\alpha}}
\mathrm{d}x
\mathrm{d}y<\infty
\right\}.
$$
We endow the space
$\mathcal{E}^{s,\alpha,2^{*}_{\alpha}}(\mathbb{R}^{N})$
with the norm
$$
\|u\|_{\mathcal{E},\alpha}^{2}
=
\|u\|_{D}^{2}
+
\left(
\int_{\mathbb{R}^{N}}
\int_{\mathbb{R}^{N}}
\frac{|u(x)|^{2^{*}_{\alpha}}|u(y)|^{2^{*}_{\alpha}}}{|x-y|^{\alpha}}
\mathrm{d}x
\mathrm{d}y
\right)^{\frac{1}{2^{*}_{\alpha}}}.
$$
For
$\alpha\in[0,N)$,
$\zeta\in[0,\Lambda)$
and
$s\in(0,1)$,
we define the best constant:
\begin{equation}\label{10}
\begin{aligned}
S_{\zeta,\alpha}:=
\inf_{u\in D^{s,2}(\mathbb{R}^{N})\setminus\{0\}}
\frac{\|u\|_{D}^{2}
-
\zeta
\int_{\mathbb{R}^{N}}
\frac{ |u|^{2}}{|x|^{2s}}
\mathrm{d}x}
{\left(
\int_{\mathbb{R}^{N}}
\int_{\mathbb{R}^{N}}
\frac{|u(x)|^{2^{*}_{\alpha}}|u(y)|^{2^{*}_{\alpha}}}{|x-y|^{\alpha}}
\mathrm{d}x
\mathrm{d}y
\right)^{\frac{1}{2^{*}_{\alpha}}}}.
\end{aligned}
\end{equation}
We know that
$S_{\zeta,\alpha}$
is attained in
$\mathbb{R}^{N}$
(see \cite{Yang2017}).
For
$s\in(0,1)$
and
$\theta\in(0,2s)$,
we define the best constant:
\begin{equation}\label{11}
\begin{aligned}
H_{\zeta,\theta}:=
\inf_{u\in D^{s,2}(\mathbb{R}^{N})\setminus\{0\}}
\frac{\|u\|_{D}^{2}
-
\zeta
\int_{\mathbb{R}^{N}}
\frac{ |u|^{2}}{|x|^{2s}}
\mathrm{d}x}
{\left(
\int_{\mathbb{R}^{N}}
\frac{|u|^{2^{*}_{s,\theta}}}{|x|^{\theta}}
\mathrm{d}x
\right)^{\frac{2}{2^{*}_{s,\theta}}}}
\end{aligned}
\end{equation}
where
$H_{\theta}$
is attained in
$\mathbb{R}^{N}$
(see \cite{Yang2015}).
A measurable function
$u:\mathbb{R}^{N}\rightarrow \mathbb{R}$
belongs to the Morrey space
$\|u\|_{\mathcal{L}^{p,\varpi}}(\mathbb{R}^{N})$
with
$p\in[1,\infty)$
and
$\varpi\in(0,N]$
if and only if
$$
\|u\|^{p}_{\mathcal{L}^{p,\varpi}(\mathbb{R}^{N})}
=
\sup_{R>0,x\in\mathbb{R}^{N}}
R^{\varpi-N}
\int_{B(x,R)}
|u(y)|^{p}
\mathrm{d}y
<\infty.
$$
\begin{lemma}
\label{lemma4}
$\left.\right.$
\cite[Theorem 1]{Palatucci2014}
For
$s\in (0,\frac{N}{2})$,
there exists
$C_{2}>0$
such that
for
$\iota$
and
$\vartheta$
satisfying
$\frac{2}{2^{*}_{s}}\leqslant\iota<1$,
$1\leqslant \vartheta<2^{*}_{s}=\frac{2N}{N-2s}$,
we have
\begin{align*}
\left(
\int_{\mathbb{R}^{N}}
|u|^{2^{*}_{s}}
\mathrm{d}x
\right)^{\frac{1}{2^{*}_{s}}}
\leqslant
C_{2}
\|u\|_{D}^{\iota}
\|u\|_{\mathcal{L}^{\vartheta,\frac{\vartheta(N-2s)}{2}}(\mathbb{R}^{N})}^{1-\iota},
\end{align*}
for any
$u\in D^{s,2}(\mathbb{R}^{N})$.
\end{lemma}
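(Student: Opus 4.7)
My plan is to reduce the inequality to its endpoint exponent $\iota=2/2^*_s$ by interpolation, prove the endpoint via a refined Sobolev inequality in a homogeneous Besov space, and then continuously embed the Morrey space of the statement into that Besov space.

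First, I verify the scale invariance: under $u\mapsto u(\lambda\cdot)$ all three norms $\|\cdot\|_{L^{2^*_s}}$, $\|\cdot\|_D$, $\|\cdot\|_{\mathcal{L}^{\vartheta,\vartheta(N-2s)/2}}$ scale like $\lambda^{-(N-2s)/2}$, so the choice $\varpi=\vartheta(N-2s)/2$ is exactly the one making the inequality homogeneous. Then, with $\iota_0=2/2^*_s$, write any $\iota\in[\iota_0,1)$ as $\iota=\lambda\cdot 1+(1-\lambda)\iota_0$ with $\lambda=(\iota-\iota_0)/(1-\iota_0)\in[0,1)$. Raising the classical Sobolev embedding $\|u\|_{L^{2^*_s}}\leqslant C\|u\|_D$ (the case $\iota=1$) to the power $\lambda$ and the endpoint inequality to the power $1-\lambda$ and multiplying, the left-hand side is still $\|u\|_{L^{2^*_s}}$, the $D$-exponent becomes $\iota$, and the Morrey exponent becomes $(1-\lambda)(1-\iota_0)=1-\iota$. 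Thus it suffices to prove the endpoint case $\iota=\iota_0$.

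Second, I invoke the G\'erard--Meyer--Oru/Bahouri--Cohen refined Sobolev inequality in fractional form,
\begin{equation*}
\|u\|_{L^{2^*_s}(\mathbb{R}^N)}\leqslant C\,\|u\|_D^{\,2/2^*_s}\,\|u\|_{\dot{B}^{-(N-2s)/2}_{\infty,\infty}}^{\,1-2/2^*_s}.
\end{equation*}
This is proved by the layer-cake formula for $\|u\|_{L^{2^*_s}}^{2^*_s}$: at each height $\tau$ split $u=P_{<j_0}u+P_{\geqslant j_0}u$ via Littlewood--Paley projectors tuned to $(-\Delta)^{s/2}$, with the threshold $j_0=j_0(\tau)$ chosen so that the Besov bound forces $\|P_{<j_0}u\|_{\infty}\leqslant\tau/2$; a Chebyshev--Bernstein estimate then bounds $|\{|P_{\geqslant j_0}u|>\tau/2\}|$ by $\|u\|_D^{2}$, and integration in $\tau$ closes the estimate.

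Third, I establish $\mathcal{L}^{\vartheta,\vartheta(N-2s)/2}(\mathbb{R}^N)\hookrightarrow\dot{B}^{-(N-2s)/2}_{\infty,\infty}(\mathbb{R}^N)$ for all $\vartheta\in[1,2^*_s)$. Set $\sigma=(N-2s)/2$ and use the heat-kernel characterization $\|u\|_{\dot B^{-\sigma}_{\infty,\infty}}=\sup_{t>0}t^{\sigma/2}\|e^{t\Delta}u\|_{\infty}$. Decompose the convolution $e^{t\Delta}u(x)$ over dyadic annuli $A_j=\{y:2^j\sqrt{t}\leqslant|y-x|<2^{j+1}\sqrt{t}\}$. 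H\"older's inequality with exponent $\vartheta\geqslant 1$ together with the Morrey bound yield $\int_{B(x,r)}|u|\,dy\leqslant Cr^{N-\varpi/\vartheta}\|u\|_{\mathcal{L}}=Cr^{(N+2s)/2}\|u\|_{\mathcal{L}}$; combined with $\|G_t\|_{L^{\infty}(A_j)}\leqslant Ct^{-N/2}e^{-c4^j}$ the $j$-series converges and returns precisely $t^{\sigma/2}|e^{t\Delta}u(x)|\leqslant C\|u\|_{\mathcal{L}^{\vartheta,\vartheta(N-2s)/2}}$.

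Chaining Step 3 into Step 2 delivers the endpoint inequality, and Step 1 then produces the full family $\iota\in[\iota_0,1)$. The main obstacle is the fractional realization of the Bahouri--Cohen argument in Step 2: the Littlewood--Paley/Bernstein machinery must be stated for $(-\Delta)^{s/2}$ in place of integer derivatives, and one has to verify that the low-frequency cutoff $j_0(\tau)$ can be chosen consistently across all $\tau$ so that the resulting integrand is summable; this requires a careful Fourier-analytic setup but is by now standard in fractional Sobolev space theory.
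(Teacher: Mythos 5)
The paper offers no proof of Lemma \ref{lemma4} at all --- it is quoted directly from Palatucci--Pisante \cite[Theorem 1]{Palatucci2014} --- and your argument is correct and reproduces essentially the strategy of that cited source: reduction to the endpoint exponent $\iota_0=2/2^{*}_{s}$ by interpolating with the Sobolev inequality, the G\'erard--Meyer--Oru refined inequality $\|u\|_{L^{2^{*}_{s}}}\lesssim\|u\|_{D}^{2/2^{*}_{s}}\|u\|_{\dot{B}^{-(N-2s)/2}_{\infty,\infty}}^{1-2/2^{*}_{s}}$ proved by Littlewood--Paley/layer-cake, and the embedding of the scaling-invariant Morrey space $\mathcal{L}^{\vartheta,\vartheta(N-2s)/2}$ into $\dot{B}^{-(N-2s)/2}_{\infty,\infty}$ via the heat-kernel characterization. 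Your exponent bookkeeping checks out (in particular $N-\varpi/\vartheta=(N+2s)/2$ and $t^{-N/2+(N+2s)/4}=t^{-(N-2s)/4}$), so the proposal is sound; only remember that the dyadic decomposition must also include the core ball $|y-x|<\sqrt{t}$, which contributes the same power of $t$.
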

We introduce the energy functionals associated to  problems $(\mathcal{P}_{i})$ $(i=1,2,3)$ by
\begin{equation*}
\begin{aligned}
I_{1}(u)
=&
\frac{1}{2}
\|u\|_{D}^{2}
-
\sum_{i=1}^{k}
\frac{1}{2\cdot2^{*}_{\alpha_{i}}}
\int_{\mathbb{R}^{N}}
\int_{\mathbb{R}^{N}}
\frac{|u(x)|^{2^{*}_{\alpha_{i}}}|u(y)|^{2^{*}_{\alpha_{i}}}}{|x-y|^{\alpha_{i}}}
\mathrm{d}x
\mathrm{d}y
-
\frac{1}{2^{*}_{s}}
\int_{\mathbb{R}^{N}}
|u|^{2^{*}_{s}}
\mathrm{d}x,\\
I_{2}(u)
=&
\frac{1}{2}
\|u\|_{\zeta}^{2}
-
\frac{1}{2\cdot2^{*}_{\alpha}}
\int_{\mathbb{R}^{N}}
\int_{\mathbb{R}^{N}}
\frac{|u(x)|^{2^{*}_{\alpha}}|u(y)|^{2^{*}_{\alpha}}}{|x-y|^{\alpha}}
\mathrm{d}x
\mathrm{d}y
-
\sum_{i=1}^{k}
\frac{1}{2^{*}_{s,\theta_{i}}}
\int_{\mathbb{R}^{N}}
\frac{|u|^{2^{*}_{s,\theta_{i}}}}
{|x|^{\theta_{i}}}
\mathrm{d}x,\\
I_{3}(u)
=&
\frac{1}{2}
\|u\|_{D}^{2}
-
\frac{1}{2\cdot2^{*}_{\alpha}}
\int_{\mathbb{R}^{N}}
\int_{\mathbb{R}^{N}}
\frac{|u(x)|^{2^{*}_{\alpha}}|u(y)|^{2^{*}_{\alpha}}}{|x-y|^{\alpha}}
\mathrm{d}x
\mathrm{d}y.
\end{aligned}
\end{equation*}
The Nehari manifolds associated with
problem $(\mathcal{P}_{i})$
$(i=1,2,3)$,
which are  defined by
$$\mathcal{N}^{i}=\{u\in D^{s,2}(\mathbb{R}^{N})|\langle I^{'}_{i}(u),u\rangle=0,~u\not=0 \},$$
and
$$
c_{0}^{i}
=\inf_{u\in\mathcal{N}^{i}}
I_{i}(u),
~
c_{1}^{i}
=\inf_{u\in D^{s,2}(\mathbb{R}^{N})}\max_{t\geqslant0}
I_{i}(tu)
~\mathrm{and}~
c^{i}
=
\inf_{\Upsilon^{i}\in\Gamma^{i}}
\max_{t\in [0,1]}
I_{i}(\Upsilon^{i}(t)),$$
where
$
\Gamma^{i}=
\{
\Upsilon^{i}\in C([0,1],D^{s,2}(\mathbb{R}^{N}))
:
\Upsilon^{i}(0)=0,
I_{i}(\Upsilon^{i}(1))<0
\}
$.
\section{Some key Lemmas}
\noindent
We show the refinement of Hardy-Littlewood-Sobolev inequality.
\begin{lemma}\label{lemma5}
For any
$s\in(0,\frac{N}{2})$
and
$\alpha\in(0,N)$,
there exists
$C_{3}>0$
such that
for
$\iota$
and
$\vartheta$
satisfying
$\frac{2}{2^{*}_{s}}\leqslant\iota<1$,
$1\leqslant \vartheta<2^{*}_{s}=\frac{2N}{N-2s}$,
we have
\begin{align*}
\left(
\int_{\mathbb{R}^{N}}
\int_{\mathbb{R}^{N}}
\frac{|u(x)|^{2^{*}_{\alpha}}|u(y)|^{2^{*}_{\alpha}}}{|x-y|^{\alpha}}
\mathrm{d}x
\mathrm{d}y
\right)^{\frac{1}{2^{*}_{\alpha}}}
\leqslant
C_{3}
\|u\|_{D}^{2\iota}
\|u\|_{\mathcal{L}^{\vartheta,\frac{\vartheta(N-2s)}{2}}(\mathbb{R}^{N})}^{2(1-\iota)},
\end{align*}
for any
$u\in D^{s,2}(\mathbb{R}^{N})$.
\end{lemma}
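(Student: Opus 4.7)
The plan is to combine the classical Hardy--Littlewood--Sobolev inequality (Lemma \ref{lemma1}) with the refined Sobolev--Morrey inequality of Palatucci--Pisante (Lemma \ref{lemma4}). The first step reduces the nonlocal double integral to a pure $L^{2^{*}_{s}}$ quantity, and the second step interpolates that $L^{2^{*}_{s}}$ quantity between $\|\cdot\|_{D}$ and the Morrey norm, which is exactly the right--hand side of the stated inequality.

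Concretely, I would first apply Lemma \ref{lemma1} to $f=h=|u|^{2^{*}_{\alpha}}$ with the balanced choice $t=r=\frac{2N}{2N-\alpha}$, which is admissible since $\frac{1}{t}+\frac{1}{r}+\frac{\alpha}{N}=2$. The key arithmetic observation is that
\begin{equation*}
2^{*}_{\alpha}\cdot\frac{2N}{2N-\alpha}=\frac{2N-\alpha}{N-2s}\cdot\frac{2N}{2N-\alpha}=\frac{2N}{N-2s}=2^{*}_{s},
\end{equation*}
so that $|u|^{2^{*}_{\alpha}}\in L^{\frac{2N}{2N-\alpha}}(\mathbb{R}^{N})$ precisely when $u\in L^{2^{*}_{s}}(\mathbb{R}^{N})$. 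Lemma \ref{lemma1} then yields
\begin{equation*}
\int_{\mathbb{R}^{N}}\int_{\mathbb{R}^{N}}\frac{|u(x)|^{2^{*}_{\alpha}}|u(y)|^{2^{*}_{\alpha}}}{|x-y|^{\alpha}}\mathrm{d}x\mathrm{d}y\leqslant C(N,\alpha)\,\|u\|_{2^{*}_{s}}^{2\cdot 2^{*}_{\alpha}},
\end{equation*}
and raising to the power $1/2^{*}_{\alpha}$ gives $\bigl(\cdots\bigr)^{1/2^{*}_{\alpha}}\leqslant C(N,\alpha)^{1/2^{*}_{\alpha}}\|u\|_{2^{*}_{s}}^{2}$.

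In the second step I would simply plug in Lemma \ref{lemma4}: for any admissible pair $(\iota,\vartheta)$ with $\frac{2}{2^{*}_{s}}\leqslant\iota<1$ and $1\leqslant\vartheta<2^{*}_{s}$, one has
\begin{equation*}
\|u\|_{2^{*}_{s}}\leqslant C_{2}\,\|u\|_{D}^{\iota}\,\|u\|_{\mathcal{L}^{\vartheta,\vartheta(N-2s)/2}(\mathbb{R}^{N})}^{1-\iota}.
\end{equation*}
Squaring this and inserting it into the bound from the first step produces exactly the claim with constant $C_{3}=C(N,\alpha)^{1/2^{*}_{\alpha}}C_{2}^{2}$.

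There is no real obstacle here; the only thing to verify carefully is the exponent bookkeeping in the HLS step, i.e.\ that the symmetric choice $t=r=\frac{2N}{2N-\alpha}$ produces the $L^{2^{*}_{s}}$ norm (not some other Lebesgue exponent), so that Lemma \ref{lemma4} can be applied on the nose. The range of $\iota$ and $\vartheta$ is inherited unchanged from Lemma \ref{lemma4}, and the condition $s\in(0,N/2)$ plays no extra role beyond ensuring $2^{*}_{s}$ is finite and that Lemma \ref{lemma4} is applicable.
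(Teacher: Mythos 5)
Your argument is correct and is essentially identical to the paper's own proof: apply the Hardy--Littlewood--Sobolev inequality with the symmetric exponents $t=r=\tfrac{2N}{2N-\alpha}$ to bound the double integral by $C(N,\alpha)\|u\|_{2^{*}_{s}}^{2\cdot 2^{*}_{\alpha}}$, take the $1/2^{*}_{\alpha}$ power, and then square the Palatucci--Pisante estimate of Lemma \ref{lemma4}. The exponent bookkeeping $2^{*}_{\alpha}\cdot\tfrac{2N}{2N-\alpha}=2^{*}_{s}$ that you single out is exactly the point the paper relies on, and your constant $C_{3}=C(N,\alpha)^{1/2^{*}_{\alpha}}C_{2}^{2}$ matches.
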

\noindent
{\bf Proof.}
Let
$\frac{2}{2^{*}_{s}}\leqslant\iota<1$
and
$1\leqslant \vartheta<2^{*}_{s}=\frac{2N}{N-2s}$
.
By Hardy-Littlewood-Sobolev inequality
and
Lemma \ref{lemma4},
we obtain
\begin{equation*}
\begin{aligned}
\left(
\int_{\mathbb{R}^{N}}
\int_{\mathbb{R}^{N}}
\frac{|u(x)|^{2^{*}_{\alpha}}|u(y)|^{2^{*}_{\alpha}}}{|x-y|^{\alpha}}
\mathrm{d}x
\mathrm{d}y
\right)^{\frac{1}{2^{*}_{\alpha}}}
\leqslant&
C(N,\alpha)^{\frac{1}{2^{*}_{\alpha}}}
\|u\|_{L^{2^{*}_{s}}(\mathbb{R}^{N})}^{2}\\
\leqslant&
C(N,\alpha)^{\frac{1}{2^{*}_{\alpha}}}
C_{2}^{2}
\|u\|_{D}^{2\iota}
\|u\|_{\mathcal{L}^{\vartheta,\frac{\vartheta(N-2s)}{2}}(\mathbb{R}^{N})}^{2(1-\iota)}.
\end{aligned}
\end{equation*}
\qed

We show some  properties of fractional Coulomb--Sobolev space
$\mathcal{E}^{s,\alpha,2^{*}_{\alpha}}(\mathbb{R}^{N})$.
\begin{lemma}\label{lemma6}
Let
$( H_{1})$
hold.
If
$u\in \mathcal{E}^{s,\alpha_{j},2^{*}_{\alpha_{j}}}(\mathbb{R}^{N})$
$(j=1,\ldots,k)$,
then

\noindent
(i) $\|\cdot\|_{D}$
is an equivalent norm in
$\mathcal{E}^{s,\alpha_{j},2^{*}_{\alpha_{j}}}(\mathbb{R}^{N})$;

\noindent
(ii)
$
u\in
\bigcap_{i=1,i\not=j}^{k}
\mathcal{E}^{s,\alpha_{i},2^{*}_{\alpha_{i}}}(\mathbb{R}^{N})$;

\noindent
(iii)
$\|\cdot\|_{\mathcal{E},\alpha_{i}}$
are equivalent norms in
$\mathcal{E}^{s,\alpha_{j},2^{*}_{\alpha_{j}}}(\mathbb{R}^{N})$,
where
$i\not=j$
and
$i=1,\ldots,k$.
\end{lemma}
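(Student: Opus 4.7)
I would prove the three parts in order, with (ii) and (iii) following easily from the argument used for (i). The central observation is that the exponent $2^{*}_{\alpha_{j}}=\frac{2N-\alpha_{j}}{N-2s}$ is calibrated so that the Hardy--Littlewood--Sobolev inequality couples directly to the Sobolev critical exponent $2^{*}_{s}$, and hence to $\|u\|_{D}$ through the Sobolev embedding $D^{s,2}(\mathbb{R}^N)\hookrightarrow L^{2^{*}_{s}}(\mathbb{R}^N)$.

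For part (i), the bound $\|u\|_{D}\leq\|u\|_{\mathcal{E},\alpha_{j}}$ is immediate from the definition of the norm. For the reverse direction, I apply Lemma \ref{lemma1} with $f=h=|u|^{2^{*}_{\alpha_{j}}}$ and $t=r=\frac{2N}{2N-\alpha_{j}}$, for which one checks $\frac{1}{t}+\frac{1}{r}+\frac{\alpha_{j}}{N}=2$. A direct computation gives $t\cdot 2^{*}_{\alpha_{j}}=\frac{2N}{2N-\alpha_{j}}\cdot\frac{2N-\alpha_{j}}{N-2s}=2^{*}_{s}$, so that
\begin{equation*}
\int_{\mathbb{R}^{N}}\int_{\mathbb{R}^{N}}\frac{|u(x)|^{2^{*}_{\alpha_{j}}}|u(y)|^{2^{*}_{\alpha_{j}}}}{|x-y|^{\alpha_{j}}}\,\mathrm{d}x\,\mathrm{d}y\;\leq\; C(N,\alpha_{j})\,\|u\|_{L^{2^{*}_{s}}(\mathbb{R}^{N})}^{2\cdot 2^{*}_{\alpha_{j}}}.
\end{equation*}
Combining with the Sobolev embedding $\|u\|_{L^{2^{*}_{s}}}\leq C\|u\|_{D}$ and raising to the power $1/2^{*}_{\alpha_{j}}$ bounds the nonlocal term in $\|u\|_{\mathcal{E},\alpha_{j}}^{2}$ by a constant multiple of $\|u\|_{D}^{2}$. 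Adding $\|u\|_{D}^{2}$ yields $\|u\|_{\mathcal{E},\alpha_{j}}^{2}\leq C'\|u\|_{D}^{2}$, completing (i).

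For (ii), part (i) guarantees $\|u\|_{D}<\infty$ whenever $u\in\mathcal{E}^{s,\alpha_{j},2^{*}_{\alpha_{j}}}(\mathbb{R}^{N})$. Rerunning the HLS + Sobolev chain with $\alpha_{i}$ replacing $\alpha_{j}$ (the index condition and exponent matching work identically, thanks to $(H_{1})$ giving $\alpha_{i}\in(0,N)$) shows the corresponding nonlocal integral is finite, so $u\in\mathcal{E}^{s,\alpha_{i},2^{*}_{\alpha_{i}}}(\mathbb{R}^{N})$. For (iii), applying (i) both with $\alpha_{j}$ and with $\alpha_{i}$ shows that each of $\|\cdot\|_{\mathcal{E},\alpha_{j}}$ and $\|\cdot\|_{\mathcal{E},\alpha_{i}}$ is equivalent to $\|\cdot\|_{D}$ on $\mathcal{E}^{s,\alpha_{j},2^{*}_{\alpha_{j}}}(\mathbb{R}^{N})$ (using (ii) so that the $\alpha_i$-norm makes sense), and transitivity delivers equivalence between them.

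\textbf{Main obstacle.} There is no substantive analytic difficulty: the lemma reduces to the fact that in the upper-critical regime the Coulomb--Sobolev space coincides as a set with $D^{s,2}(\mathbb{R}^{N})$. The only step that requires care is verifying that the HLS conjugate exponent $t=\frac{2N}{2N-\alpha_{j}}$ satisfies $t\cdot 2^{*}_{\alpha_{j}}=2^{*}_{s}$ exactly, which is the algebraic identity underlying the entire argument.
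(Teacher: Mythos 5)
Your proposal is correct and follows essentially the same route as the paper: the paper's proof invokes the best constant $S_{0,\alpha_j}$ from \eqref{10} to bound the double integral by $\|u\|_{D}^{2\cdot 2^{*}_{\alpha_j}}$, and the positivity of that constant is exactly the Hardy--Littlewood--Sobolev plus Sobolev-embedding chain you write out explicitly (including the key identity $t\cdot 2^{*}_{\alpha_j}=2^{*}_{s}$). Parts (ii) and (iii) are then deduced in the same way in both arguments, by showing every $\|\cdot\|_{\mathcal{E},\alpha_i}$ is equivalent to $\|\cdot\|_{D}$ and using transitivity.
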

\noindent
{\bf Proof.}
\noindent
{\bf (1).}
Set
$j=1,\ldots,k$.
For any $u\in\mathcal{E}^{s,\alpha_{j},2^{*}_{\alpha{j}}}(\mathbb{R}^{N})$,
applying the definition of fractional Coulomb--Sobolev space,
we know
\begin{align}\label{14a}
\|u\|_{D}^{2}
\leqslant
\|u\|_{\mathcal{E},\alpha_{j}}^{2}
<\infty.
\end{align}
This implies that
$\mathcal{E}^{s,\alpha_{j},2^{*}_{\alpha_{j}}}(\mathbb{R}^{N})\subset D^{s,2}(\mathbb{R}^{N})$.

\noindent
According to  $\mathcal{E}^{s,\alpha_{j},2^{*}_{\alpha_{j}}}(\mathbb{R}^{N})\subset D^{s,2}(\mathbb{R}^{N})$
and \eqref{10},
we have
\begin{align}\label{14b}
\|u\|_{\mathcal{E},\alpha_{j}}^{2}
\leqslant
\left(
1
+
\frac{1}{S_{0,\alpha_{j}}}
\right)
\|u\|_{D}^{2}.
\end{align}
Combining
\eqref{14a}
and
\eqref{14b},
we obtain
\begin{align}\label{14c}
\|u\|_{D}^{2}
\leqslant
\|u\|_{\mathcal{E},\alpha_{j}}^{2}
\leqslant
\left(
1
+
\frac{1}{S_{0,\alpha_{j}}}
\right)
\|u\|_{D}^{2}.
\end{align}
These imply that
$\|\cdot\|_{D}$
is an equivalent norm in
$\mathcal{E}^{s,\alpha_{j},2^{*}_{\alpha_{j}}}(\mathbb{R}^{N})$.

\noindent
{\bf (2).}
For any $u\in\mathcal{E}^{s,\alpha_{j},2^{*}_{\alpha{j}}}(\mathbb{R}^{N})\subset D^{s,2}(\mathbb{R}^{N})$,
by using \eqref{14a}
and
\eqref{10},
we know
\begin{align}\label{14d}
S_{0,\alpha_{i}}
\left(
\int_{\mathbb{R}^{N}}
\int_{\mathbb{R}^{N}}
\frac{|u(x)|^{2^{*}_{\alpha_{i}}}|u(y)|^{2^{*}_{\alpha_{i}}}}{|x-y|^{\alpha_{i}}}
\mathrm{d}x
\mathrm{d}y
\right)^{\frac{1}{2^{*}_{\alpha_{i}}}}
\leqslant
\|u\|_{D}^{2}
\leqslant
\|u\|_{\mathcal{E},\alpha_{j}}^{2}
<\infty,
\end{align}
where
$i\not=j$
and
$i=1,\ldots,k$.
The inequality
\eqref{14d}
gives that
\begin{align*}
\|u\|_{\mathcal{E},\alpha_{i}}^{2}
=
\|u\|_{D}^{2}
+
\left(
\int_{\mathbb{R}^{N}}
\int_{\mathbb{R}^{N}}
\frac{|u(x)|^{2^{*}_{\alpha_{i}}}|u(y)|^{2^{*}_{\alpha_{i}}}}{|x-y|^{\alpha_{i}}}
\mathrm{d}x
\mathrm{d}y
\right)^{\frac{1}{2^{*}_{\alpha_{i}}}}
<\infty.
\end{align*}
This implies that $u\in\bigcap_{i=1,i\not=j}^{k}\mathcal{E}^{s,\alpha_{i},2^{*}_{\alpha{i}}}(\mathbb{R}^{N})$.

\noindent
{\bf (3).}
For any $u\in\mathcal{E}^{s,\alpha_{j},2^{*}_{\alpha{j}}}(\mathbb{R}^{N})$,
by using \eqref{14b},
we have
\begin{equation*}
\begin{aligned}
\|u\|_{\mathcal{E},\alpha_{j}}^{2}
\leqslant
\left(
\frac{S_{0,\alpha_{j}}+1}{S_{0,\alpha_{j}}}
\right)
\|u\|_{D}^{2}
\leqslant
\left(
\frac{S_{0,\alpha_{j}}+1}{S_{0,\alpha_{j}}}
\right)
\|u\|_{\mathcal{E},\alpha_{i}}^{2},
\end{aligned}
\end{equation*}
which imply that
\begin{equation}\label{15}
\begin{aligned}
\left(
\frac{S_{0,\alpha_{j}}}{S_{0,\alpha_{j}}+1}
\right)
\|u\|_{\mathcal{E},\alpha_{i}}^{2}
\leqslant
\|u\|_{\mathcal{E},\alpha_{j}}^{2}
\leqslant
\left(
\frac{S_{0,\alpha_{j}}+1}{S_{0,\alpha_{j}}}
\right)
\|u\|_{\mathcal{E},\alpha_{i}}^{2},
\end{aligned}
\end{equation}
where
$0<\frac{S_{0,\alpha_{j}}}{S_{0,\alpha_{j}}+1}
<1<
\frac{S_{0,\alpha_{j}}+1}{S_{0,\alpha_{j}}}<\infty
$.
\qed
\begin{lemma}\label{lemma7}
[Endpoint refined Hardy--Sobolev inequality]
Let
$s\in(0,\frac{N}{2})$,
$\theta\in(0,2s)$
and
$\alpha\in(0,N)$.
Then there exists a  constant
$C_{4}>0$
such that the inequality
\begin{equation*}
\begin{aligned}
\int_{\mathbb{R}^{N}}
\frac{|u|^{2^{*}_{s,\theta}}}
{|x|^{\theta}}
\mathrm{d}x
\leqslant
C_{4}
\|u\|_{D}^{\frac{2(N+\theta-\alpha)}{N+2s-\alpha}}
\left(
\int_{\mathbb{R}^{N}}
\int_{\mathbb{R}^{N}}
\frac{|u(x)|^{2^{*}_{\alpha}}|u(y)|^{2^{*}_{\alpha}}}{|x-y|^{\alpha}}
\mathrm{d}x
\mathrm{d}y
\right)
^{\frac{2s-\theta}{N+2s-\alpha}},
\end{aligned}
\end{equation*}
holds for all
$u\in \mathcal{E}^{s,\alpha,2^{*}_{\alpha}}(\mathbb{R}^{N})$.
\end{lemma}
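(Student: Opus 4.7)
The plan is to interpolate $\int_{\mathbb{R}^N}|u|^{2^{*}_{s,\theta}}/|x|^{\theta}\,\mathrm{d}x$ between the pure Sobolev norm $\|u\|_{L^{2^{*}_{s}}}^{2^{*}_{s}}$ (to which we then apply the endpoint refined Sobolev inequality of Lemma \ref{lemma2}) and the Hardy functional $\int|u|^{2}/|x|^{2s}\,\mathrm{d}x$ (to which we apply the fractional Hardy inequality).

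First, introduce the parameter $\lambda=\theta/(2s)\in(0,1)$, so that $2s\lambda=\theta$ and a direct computation gives
\begin{equation*}
2^{*}_{s,\theta}=2^{*}_{s}(1-\lambda)+2\lambda, \qquad \theta=2s\lambda=(2s)\lambda.
\end{equation*}
This is the algebraic identity that makes the split compatible with H\"older's inequality. Writing
\begin{equation*}
\frac{|u|^{2^{*}_{s,\theta}}}{|x|^{\theta}}=|u|^{2^{*}_{s}(1-\lambda)}\cdot\frac{|u|^{2\lambda}}{|x|^{2s\lambda}},
\end{equation*}
and applying H\"older's inequality with conjugate exponents $p=1/(1-\lambda)$ and $p'=1/\lambda$ yields
\begin{equation*}
\int_{\mathbb{R}^{N}}\frac{|u|^{2^{*}_{s,\theta}}}{|x|^{\theta}}\,\mathrm{d}x
\leqslant
\left(\int_{\mathbb{R}^{N}}|u|^{2^{*}_{s}}\,\mathrm{d}x\right)^{1-\lambda}
\left(\int_{\mathbb{R}^{N}}\frac{|u|^{2}}{|x|^{2s}}\,\mathrm{d}x\right)^{\lambda}.
\end{equation*}

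Next, estimate each factor separately. For the second factor, the fractional Hardy inequality gives $\int|u|^{2}/|x|^{2s}\,\mathrm{d}x\leqslant\Lambda^{-1}\|u\|_{D}^{2}$. For the first factor, Lemma \ref{lemma2} gives
\begin{equation*}
\left(\int_{\mathbb{R}^{N}}|u|^{2^{*}_{s}}\,\mathrm{d}x\right)^{1-\lambda}
\leqslant
C_{1}^{2^{*}_{s}(1-\lambda)}\,\|u\|_{D}^{A}\,
\left(\int\!\!\int\frac{|u(x)|^{2^{*}_{\alpha}}|u(y)|^{2^{*}_{\alpha}}}{|x-y|^{\alpha}}\,\mathrm{d}x\,\mathrm{d}y\right)^{B},
\end{equation*}
where $A=2^{*}_{s}(1-\lambda)\cdot\frac{(N-\alpha)(N-2s)}{N(N+2s-\alpha)}$ and $B=2^{*}_{s}(1-\lambda)\cdot\frac{s(N-2s)}{N(N+2s-\alpha)}$.

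Finally, collecting the powers of $\|u\|_{D}$ and of the double integral and using $1-\lambda=(2s-\theta)/(2s)$, one computes
\begin{equation*}
A+2\lambda=\frac{(2s-\theta)(N-\alpha)+\theta(N+2s-\alpha)}{s(N+2s-\alpha)}=\frac{2(N+\theta-\alpha)}{N+2s-\alpha},
\qquad
B=\frac{2s-\theta}{N+2s-\alpha},
\end{equation*}
which are exactly the exponents appearing in the statement. Absorbing all constants into a single $C_{4}>0$ completes the proof. The only non-routine point is recognizing the correct interpolation parameter $\lambda=\theta/(2s)$ that simultaneously splits $2^{*}_{s,\theta}$ as a convex combination of $2^{*}_{s}$ and $2$ and distributes the weight $|x|^{-\theta}$ as a power of the Hardy weight $|x|^{-2s}$; once this is in place, the remaining exponent bookkeeping, while slightly tedious, is forced.
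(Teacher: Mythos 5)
Your proof is correct and is essentially the paper's own argument: the paper performs the identical H\"older split (with the first factor $\bigl(\int |u|^{2}/|x|^{2s}\,\mathrm{d}x\bigr)^{\theta/(2s)}$ and the second $\bigl(\int |u|^{2^{*}_{s}}\,\mathrm{d}x\bigr)^{(2s-\theta)/(2s)}$, i.e.\ your $\lambda=\theta/(2s)$), then applies the fractional Hardy inequality and Lemma \ref{lemma2} exactly as you do, and the exponent bookkeeping matches. No issues.
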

\noindent
{\bf Proof.}
For any
$u\in \mathcal{E}^{s,\alpha,2^{*}_{\alpha}}(\mathbb{R}^{N})$.
By using H\"{o}lder inequality and fractional Hardy inequality,
we obtain
\begin{equation}\label{16}
\begin{aligned}
\int_{\mathbb{R}^{N}}
\frac{|u|^{2^{*}_{s,\theta}}}
{|x|^{\theta}}
\mathrm{d}x
=&
\int_{\mathbb{R}^{N}}
\frac{|u|^{\frac{\theta}{s}}}
{|x|^{\theta}}
\cdot
|u|^{\frac{2(N-\theta)}{N-2s}-\frac{\theta}{s}}
\mathrm{d}x\\
\leqslant&
\left(
\int_{\mathbb{R}^{N}}
\frac{|u|^{\frac{\theta}{s}\cdot\frac{2s}{\theta}}}
{|x|^{\theta\cdot\frac{2s}{\theta}}}
\mathrm{d}x
\right)
^{\frac{\theta}{2s}}
\left(
\int_{\mathbb{R}^{N}}
|u|^{\frac{(2s-\theta)N}{(N-2s)s}\cdot\frac{2s}{2s-\theta}}
\mathrm{d}x
\right)
^{1-\frac{\theta}{2s}}
\\
=&
\left(
\int_{\mathbb{R}^{N}}
\frac{|u|^{2}}
{|x|^{2s}}
\mathrm{d}x
\right)
^{\frac{\theta}{2s}}
\left(
\int_{\mathbb{R}^{N}}
|u|^{\frac{2N}{N-2s}}
\mathrm{d}x
\right)
^{\frac{2s-\theta}{2s}}
\\
\leqslant&
\left(
\frac{1}{\Lambda}
\right)
^{\frac{\theta}{2s}}
\|u\|_{D}
^{\frac{\theta}{s}}
\|u\|_{L^{2^{*}_{s}}(\mathbb{R}^{N})}^{\frac{N(2s-\theta)}{s(N-2s)}}.
\end{aligned}
\end{equation}
According to
Lemma \ref{lemma2}
and
(\ref{16}),
we know
\begin{equation*}
\begin{aligned}
&\int_{\mathbb{R}^{N}}
\frac{|u|^{2^{*}_{s,\theta}}}
{|x|^{\theta}}
\mathrm{d}x\\
\leqslant&
C_{4}
\|u\|_{D}
^{\frac{\theta}{s}}
\|u\|_{D}^{\frac{(N-\alpha)(2s-\theta)}{s(N+2s-\alpha)}}
\left(
\int_{\mathbb{R}^{N}}
\int_{\mathbb{R}^{N}}
\frac{|u(x)|^{2^{*}_{\alpha}}|u(y)|^{2^{*}_{\alpha}}}{|x-y|^{\alpha}}
\mathrm{d}x
\mathrm{d}y
\right)
^{\frac{2s-\theta}{N+2s-\alpha}}\\
=&
C_{4}
\|u\|_{D}^{\frac{2(N+\theta-\alpha)}{N+2s-\alpha}}
\left(
\int_{\mathbb{R}^{N}}
\int_{\mathbb{R}^{N}}
\frac{|u(x)|^{2^{*}_{\alpha}}|u(y)|^{2^{*}_{\alpha}}}{|x-y|^{\alpha}}
\mathrm{d}x
\mathrm{d}y
\right)
^{\frac{2s-\theta}{N+2s-\alpha}}.
\end{aligned}
\end{equation*}
\qed

We study the refinement of Hardy--Sobolev inequality.
In \cite{Yang2015,Yang2017},
the authors also obtained the Refinement of Hardy--Sobolev inequality.
However,
their parameter $\tilde{\vartheta}$
satisfying (see \cite[Theorem 1]{Yang2015})
$$1\leqslant \tilde{\vartheta}<2^{*}_{s,\theta}.$$
It is easy to see that
$$
2^{*}_{s,\theta}
=\frac{2(N-\theta)}{N-2s}
<
\frac{2N}{N-2s}
=
2^{*}_{s},$$
for
$s\in (0,\frac{N}{2})$
and
$\theta\in(0,2s)$.
{\bf It is natural to ask the case of
$\tilde{\vartheta}\in[2^{*}_{s,\theta},2^{*}_{s})$}.
Our method extends the parameter
$\tilde{\vartheta}$
from
$[1,2^{*}_{s,\theta})$
to
$[1,2^{*}_{s})$
.
\begin{lemma}\label{lemma8}
$\left.\right.$
[Refinement of Hardy--Sobolev inequality]
For
$s\in (0,\frac{N}{2})$
and
$\theta\in(0,2s)$,
there exists
$C_{5}>0$
such that
for
$\iota$
and
$\vartheta$
satisfying
$\frac{2}{2^{*}_{s}}\leqslant\iota<1$,
\textcolor{red}{$1\leqslant \vartheta<2^{*}_{s}$},
we have
\begin{align*}
\left(
\int_{\mathbb{R}^{N}}
\frac{|u|^{2^{*}_{s,\theta}}}{|x|^{\theta}}
\mathrm{d}x
\right)^{\frac{1}{2^{*}_{s,\theta}}}
\leqslant
C_{5}
\|u\|_{D}
^{\frac{\theta(N-2s)+\iota N(2s-\theta)}{2s(N-\theta)}}
\|u\|_{\mathcal{L}^{\vartheta,\frac{\vartheta(N-2s)}{2}}(\mathbb{R}^{N})}^{\frac{N(1-\iota)(2s-\theta)}{2s(N-\theta)}},
\end{align*}
for any
$u\in D^{s,2}(\mathbb{R}^{N})$.
\end{lemma}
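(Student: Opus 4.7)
My plan is to piggyback on the Hölder decomposition already carried out in the proof of Lemma \ref{lemma7} (display (\ref{16})) and then feed its Sobolev piece into the Morrey refinement of Lemma \ref{lemma4} rather than into the endpoint Sobolev inequality (Lemma \ref{lemma2}). The point is that the splitting
$$\frac{|u|^{2^{*}_{s,\theta}}}{|x|^{\theta}}=\left(\frac{|u|^{2}}{|x|^{2s}}\right)^{\theta/(2s)}|u|^{(2s-\theta)N/(s(N-2s))\cdot(2s-\theta)/(2s)\cdot\text{(exp.)}}$$
already done in (\ref{16}) estimates the Hardy--Sobolev integral purely in terms of $\|u\|_D$ and $\|u\|_{L^{2^{*}_{s}}}$, so any Sobolev refinement valid up to $\vartheta<2^{*}_{s}$ immediately translates into a Hardy--Sobolev refinement with the same range of $\vartheta$. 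That is precisely what lets us break the $\vartheta<2^{*}_{s,\theta}$ barrier of \cite{Yang2015,Yang2017}.

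Concretely, I would: \textbf{(i)} start from the last inequality of (\ref{16}), namely
$$\int_{\mathbb{R}^{N}}\frac{|u|^{2^{*}_{s,\theta}}}{|x|^{\theta}}\,\mathrm{d}x\leqslant\Lambda^{-\theta/(2s)}\|u\|_{D}^{\theta/s}\|u\|_{L^{2^{*}_{s}}(\mathbb{R}^{N})}^{N(2s-\theta)/(s(N-2s))};$$
\textbf{(ii)} invoke Lemma \ref{lemma4} with the given $\iota\in[2/2^{*}_{s},1)$ and $\vartheta\in[1,2^{*}_{s})$ to replace $\|u\|_{L^{2^{*}_{s}}}$ by $C_{2}\|u\|_{D}^{\iota}\|u\|_{\mathcal{L}^{\vartheta,\vartheta(N-2s)/2}(\mathbb{R}^{N})}^{1-\iota}$; \textbf{(iii)} take the $1/2^{*}_{s,\theta}$--th root and collect the exponents. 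Since $1/2^{*}_{s,\theta}=(N-2s)/(2(N-\theta))$, the exponent on $\|u\|_{D}$ becomes
$$\frac{N-2s}{2(N-\theta)}\left[\frac{\theta}{s}+\frac{\iota N(2s-\theta)}{s(N-2s)}\right]=\frac{\theta(N-2s)+\iota N(2s-\theta)}{2s(N-\theta)},$$
while the exponent on the Morrey norm becomes $N(1-\iota)(2s-\theta)/(2s(N-\theta))$, which matches the claim. The constant $C_{5}$ can be taken as $\Lambda^{-\theta/(2^{*}_{s,\theta}\cdot 2s)}C_{2}^{N(2s-\theta)/(s(N-2s)\cdot 2^{*}_{s,\theta})}$.

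There is no genuine obstacle here — once one notices that (\ref{16}) already reduces the Hardy--Sobolev norm to a product of $\|u\|_{D}$ and $\|u\|_{L^{2^{*}_{s}}}$, the entire statement is a one-line substitution followed by bookkeeping of exponents. The only point that requires a bit of care is verifying that the hypotheses $\iota\geqslant 2/2^{*}_{s}$ and $\vartheta<2^{*}_{s}$ of Lemma \ref{lemma4} are exactly those imposed in Lemma \ref{lemma8}, so no additional restriction on $\theta$ beyond $\theta\in(0,2s)$ (which is already needed to apply the fractional Hardy inequality and to make the Hölder split legitimate) is introduced.
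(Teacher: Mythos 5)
Your proposal is correct and is essentially the paper's own proof: the paper likewise combines the Hölder/Hardy estimate (\ref{16}) with Lemma \ref{lemma4} and then takes the $\tfrac{1}{2^{*}_{s,\theta}}$-th power, arriving at the same exponents. Your bookkeeping of the exponents and of the admissible ranges of $\iota$ and $\vartheta$ matches the paper exactly.
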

\begin{proof}
Combining (\ref{16}) and Lemma \ref{lemma4},
we have
\begin{equation*}
\begin{aligned}
\left(
\int_{\mathbb{R}^{N}}
\frac{|u|^{2^{*}_{s,\theta}}}{|x|^{\theta}}
\mathrm{d}x
\right)^{\frac{1}{2^{*}_{s,\theta}}}
\leqslant&
\left(
\frac{1}{\Lambda}
\right)
^{\frac{\theta(N-2s)}{4s(N-\theta)}}
\|u\|_{D}
^{\frac{\theta(N-2s)}{2s(N-\theta)}}
\|u\|_{L^{2^{*}_{s}}(\mathbb{R}^{N})}^{\frac{N(2s-\theta)}{2s(N-\theta)}}\\
\leqslant&
\left(
\frac{1}{\Lambda}
\right)
^{\frac{\theta(N-2s)}{4s(N-\theta)}}
\|u\|_{D}
^{\frac{\theta(N-2s)}{2s(N-\theta)}}
\left(
C_{2}
\|u\|_{D}^{\iota}
\|u\|_{\mathcal{L}^{\vartheta,\frac{\vartheta(N-2s)}{2}}}^{1-\iota}
\right)^{\frac{N(2s-\theta)}{2s(N-\theta)}}\\
=&
C_{5}
\|u\|_{D}
^{\frac{\theta(N-2s)+\iota N(2s-\theta)}{2s(N-\theta)}}
\|u\|_{\mathcal{L}^{\vartheta,\frac{\vartheta(N-2s)}{2}}(\mathbb{R}^{N})}
^{\frac{N(1-\iota)(2s-\theta)}{2s(N-\theta)}}.
\end{aligned}
\end{equation*}
\end{proof}

\begin{lemma}\label{lemma9}
Let
$s\in(0,\frac{N}{2})$
and
$0<\theta<\tilde{\theta}<2s$.
Then the inequality
\begin{equation*}
\begin{aligned}
\int_{\mathbb{R}^{N}}
\frac{|u|^{2^{*}_{s,\theta}}}
{|x|^{\theta}}
\mathrm{d}x
\leqslant
\left(
\int_{\mathbb{R}^{N}}
\frac{|u|^{2^{*}_{s,\tilde{\theta}}}}
{|x|^{\tilde{\theta}}}
\mathrm{d}x
\right)
^{\frac{\theta}{\tilde{\theta}}}
\left(
\int_{\mathbb{R}^{N}}
|u|^{2^{*}_{s}}
\mathrm{d}x
\right)
^{\frac{\tilde{\theta}-\theta}{\tilde{\theta}}},
\end{aligned}
\end{equation*}
holds for all
$u\in D^{s,2}(\mathbb{R}^{N})$.
\end{lemma}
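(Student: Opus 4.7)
The plan is to view the weighted integral on the left as a product and apply Hölder's inequality in the conjugate pair
\[
p=\frac{\tilde{\theta}}{\theta}, \qquad q=\frac{\tilde{\theta}}{\tilde{\theta}-\theta},
\]
which satisfy $\tfrac{1}{p}+\tfrac{1}{q}=1$ thanks to $0<\theta<\tilde{\theta}<2s$. The strategy is to put the full weight $|x|^{-\theta}$ into the first factor (it is absent from the right-hand side's Sobolev integral) and split the power of $|u|$ so that after applying Hölder one recovers exactly the two integrals on the right.

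Concretely, I would write
\[
\frac{|u|^{2^{*}_{s,\theta}}}{|x|^{\theta}}
=
\frac{|u|^{2^{*}_{s,\tilde{\theta}}\cdot\frac{\theta}{\tilde{\theta}}}}{|x|^{\theta}}\cdot |u|^{2^{*}_{s}\cdot\frac{\tilde{\theta}-\theta}{\tilde{\theta}}},
\]
raise the first factor to the power $p$ and the second to the power $q$, and check that the resulting integrands are exactly $|u|^{2^{*}_{s,\tilde{\theta}}}/|x|^{\tilde{\theta}}$ and $|u|^{2^{*}_{s}}$ respectively. The only nonalgorithmic step is to verify that the exponent on $|u|$ in the combined product is indeed $2^{*}_{s,\theta}$, i.e. that
\[
2^{*}_{s,\tilde{\theta}}\cdot\frac{\theta}{\tilde{\theta}}+2^{*}_{s}\cdot\frac{\tilde{\theta}-\theta}{\tilde{\theta}}=2^{*}_{s,\theta}.
\]
This reduces to the identity $(N-\tilde{\theta})\theta+N(\tilde{\theta}-\theta)=\tilde{\theta}(N-\theta)$, which is a direct computation. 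Once this compatibility is confirmed, Hölder's inequality immediately yields
\[
\int_{\mathbb{R}^{N}}\frac{|u|^{2^{*}_{s,\theta}}}{|x|^{\theta}}\,\mathrm{d}x
\leqslant
\left(\int_{\mathbb{R}^{N}}\frac{|u|^{2^{*}_{s,\tilde{\theta}}}}{|x|^{\tilde{\theta}}}\,\mathrm{d}x\right)^{\frac{\theta}{\tilde{\theta}}}
\left(\int_{\mathbb{R}^{N}}|u|^{2^{*}_{s}}\,\mathrm{d}x\right)^{\frac{\tilde{\theta}-\theta}{\tilde{\theta}}}.
\]

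There is no real obstacle here: the inequality is a log-convexity statement for the map $\theta\mapsto\log\!\int|u|^{2^{*}_{s,\theta}}|x|^{-\theta}\,\mathrm{d}x$ along the one-parameter family with $\theta=0$ corresponding to $2^{*}_{s}$. The only thing one must be careful with is matching exponents so that no Hardy-type estimate (and hence no restriction beyond $\theta,\tilde{\theta}\in(0,2s)$) is needed; the argument is purely a weighted Hölder interpolation and requires only that $u\in D^{s,2}(\mathbb{R}^{N})$ be such that both right-hand factors are finite, otherwise the inequality is trivial.
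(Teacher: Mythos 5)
Your proposal is correct and is essentially identical to the paper's own proof: both split the integrand as $\frac{|u|^{2^{*}_{s,\tilde{\theta}}\cdot\theta/\tilde{\theta}}}{|x|^{\theta}}\cdot|u|^{2^{*}_{s}\cdot(\tilde{\theta}-\theta)/\tilde{\theta}}$ and apply H\"older with exponents $\tilde{\theta}/\theta$ and $\tilde{\theta}/(\tilde{\theta}-\theta)$, and the exponent identity $(N-\tilde{\theta})\theta+N(\tilde{\theta}-\theta)=\tilde{\theta}(N-\theta)$ you flag is exactly the computation the paper carries out.
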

\noindent
{\bf Proof.}
For any
$u\in D^{s,2}(\mathbb{R}^{N})$.
By using H\"{o}lder inequality and $0<\theta<\tilde{\theta}<2s$,
we obtain
\begin{equation*}
\begin{aligned}
\int_{\mathbb{R}^{N}}
\frac{|u|^{2^{*}_{s,\theta}}}
{|x|^{\theta}}
\mathrm{d}x
=&
\int_{\mathbb{R}^{N}}
\frac{|u|^{\frac{\theta}{\tilde{\theta}}\cdot\frac{2(N-\tilde{\theta})}{N-2s}}}
{|x|^{\theta}}
\cdot
|u|^{\frac{2N}{N-2s}\cdot\frac{\tilde{\theta}-\theta}{\tilde{\theta}}}
\mathrm{d}x\\
\leqslant&
\left(
\int_{\mathbb{R}^{N}}
\frac{|u|^{\frac{\theta}{\tilde{\theta}}\cdot\frac{2(N-\tilde{\theta})}{N-2s}\cdot\frac{\tilde{\theta}}{\theta}}}
{|x|^{\theta\cdot\frac{\tilde{\theta}}{\theta}}}
\mathrm{d}x
\right)
^{\frac{\theta}{\tilde{\theta}}}
\left(
\int_{\mathbb{R}^{N}}
|u|^{\frac{2N}{N-2s}\cdot\frac{\tilde{\theta}-\theta}{\tilde{\theta}}\cdot\frac{\tilde{\theta}}{\tilde{\theta}-\theta}}
\mathrm{d}x
\right)
^{1-\frac{\theta}{\tilde{\theta}}}
\\
=&
\left(
\int_{\mathbb{R}^{N}}
\frac{|u|^{\frac{2(N-\tilde{\theta})}{N-2s}}}
{|x|^{\tilde{\theta}}}
\mathrm{d}x
\right)
^{\frac{\theta}{\tilde{\theta}}}
\left(
\int_{\mathbb{R}^{N}}
|u|^{\frac{2N}{N-2s}}
\mathrm{d}x
\right)
^{\frac{\tilde{\theta}-\theta}{\tilde{\theta}}}.
\end{aligned}
\end{equation*}
\qed

\begin{lemma}\label{lemma10}
Let
$s\in(0,\frac{N}{2})$,
$0<\bar{\theta}<\theta<2s$
and
$2\theta-\bar{\theta}<2s$.
Then the inequality
\begin{equation*}
\begin{aligned}
\int_{\mathbb{R}^{N}}
\frac{|u|^{2^{*}_{s,\theta}}}
{|x|^{\theta}}
\mathrm{d}x
\leqslant
\left(
\int_{\mathbb{R}^{N}}
\frac{|u|^{2^{*}_{s,\bar{\theta}}}}
{|x|^{\bar{\theta}}}
\mathrm{d}x
\right)
^{\frac{1}{2}}
\left(
\int_{\mathbb{R}^{N}}
\frac{|u|^{2^{*}_{s,2\theta-\bar{\theta}}}}
{|x|^{2\theta-\bar{\theta}}}
\mathrm{d}x
\right)
^{\frac{1}{2}},
\end{aligned}
\end{equation*}
holds for all
$u\in D^{s,2}(\mathbb{R}^{N})$.
\end{lemma}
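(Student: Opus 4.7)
The plan is to prove the inequality by a direct application of the Cauchy--Schwarz inequality (i.e., H\"older with conjugate exponents $2$ and $2$), after writing the integrand $|u|^{2^{*}_{s,\theta}}/|x|^{\theta}$ as a product of two factors, one of which pairs naturally with $|x|^{-\bar{\theta}}|u|^{2^{*}_{s,\bar{\theta}}}$ and the other with $|x|^{-(2\theta-\bar{\theta})}|u|^{2^{*}_{s,2\theta-\bar{\theta}}}$. This is the same strategy used in the proof of Lemma \ref{lemma9}, but with the symmetric split based on the midpoint relation $\theta = \frac{1}{2}\bigl(\bar{\theta}+(2\theta-\bar{\theta})\bigr)$.

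The key algebraic observation is that the map $\eta\mapsto 2^{*}_{s,\eta}=\frac{2(N-\eta)}{N-2s}$ is affine, so
\begin{equation*}
\frac{1}{2}\,2^{*}_{s,\bar{\theta}} + \frac{1}{2}\,2^{*}_{s,2\theta-\bar{\theta}} = 2^{*}_{s,\theta},
\qquad
\frac{1}{2}\bar{\theta}+\frac{1}{2}(2\theta-\bar{\theta})=\theta.
\end{equation*}
Consequently the integrand decomposes as
\begin{equation*}
\frac{|u|^{2^{*}_{s,\theta}}}{|x|^{\theta}}
= \left(\frac{|u|^{2^{*}_{s,\bar{\theta}}}}{|x|^{\bar{\theta}}}\right)^{\!1/2}
  \left(\frac{|u|^{2^{*}_{s,2\theta-\bar{\theta}}}}{|x|^{2\theta-\bar{\theta}}}\right)^{\!1/2},
\end{equation*}
so that a single application of the Cauchy--Schwarz inequality over $\mathbb{R}^{N}$ yields the claimed bound.

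Before invoking Cauchy--Schwarz I would verify that the hypotheses $0<\bar{\theta}<\theta<2s$ and $2\theta-\bar{\theta}<2s$ guarantee all three exponents $2^{*}_{s,\bar{\theta}}$, $2^{*}_{s,\theta}$ and $2^{*}_{s,2\theta-\bar{\theta}}$ are admissible Hardy--Sobolev critical exponents: the lower bound $2\theta-\bar{\theta}>0$ follows from $\theta>\bar{\theta}>0$, and the upper bound $2\theta-\bar{\theta}<2s$ is assumed explicitly, so the right-hand side makes sense for $u\in D^{s,2}(\mathbb{R}^{N})$ by the Hardy--Sobolev inequality.

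There is no serious obstacle here; the entire argument is a one-line H\"older estimate. The only content is identifying the correct split, and this is essentially forced by requiring the two exponents on $|u|$ to average to $2^{*}_{s,\theta}$ and the two exponents on $|x|$ to average to $\theta$ simultaneously, which is exactly what the symmetric choice $\bar{\theta}$ and $2\theta-\bar{\theta}$ achieves.
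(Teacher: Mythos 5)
Your proposal is correct and is essentially identical to the paper's own proof: the paper also splits the integrand as $\frac{|u|^{(N-\bar{\theta})/(N-2s)}}{|x|^{\bar{\theta}/2}}\cdot\frac{|u|^{(N-(2\theta-\bar{\theta}))/(N-2s)}}{|x|^{\theta-\bar{\theta}/2}}$, which is exactly your symmetric square-root factorization, and then applies Cauchy--Schwarz. Your observation that the split is forced by the affine dependence of $2^{*}_{s,\eta}$ on $\eta$ is a clean way to see why the decomposition works.
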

\noindent
{\bf Proof.}
For any
$u\in D^{s,2}(\mathbb{R}^{N})$.
By using H\"{o}lder inequality and $0<\bar{\theta}<\theta<2s$,
we obtain
\begin{equation*}
\begin{aligned}
\int_{\mathbb{R}^{N}}
\frac{|u|^{2^{*}_{s,\theta}}}
{|x|^{\theta}}
\mathrm{d}x
=&
\int_{\mathbb{R}^{N}}
\frac{|u|^{\frac{N-\bar{\theta}}{N-2s}}}
{|x|^{\frac{\bar{\theta}}{2}}}
\cdot
\frac{|u|^{\frac{N-(2\theta-\bar{\theta})}{N-2s}}}
{|x|^{\theta-\frac{\bar{\theta}}{2}}}
\mathrm{d}x\\
\leqslant&
\left(
\int_{\mathbb{R}^{N}}
\frac{|u|^{\frac{2(N-\bar{\theta})}{N-2s}}}
{|x|^{\bar{\theta}}}
\mathrm{d}x
\right)
^{\frac{1}{2}}
\left(
\int_{\mathbb{R}^{N}}
\frac{|u|^{\frac{2[N-(2\theta-\bar{\theta})]}{N-2s}}}
{|x|^{2\theta-\bar{\theta}}}
\mathrm{d}x
\right)
^{\frac{1}{2}}.
\end{aligned}
\end{equation*}
Since
$0<2\theta-\bar{\theta}<2s$,
we get
\begin{equation*}
\begin{aligned}
\int_{\mathbb{R}^{N}}
\frac{|u|^{2^{*}_{s,\theta}}}
{|x|^{\theta}}
\mathrm{d}x
\leqslant
\left(
\int_{\mathbb{R}^{N}}
\frac{|u|^{2^{*}_{s,\bar{\theta}}}}
{|x|^{\bar{\theta}}}
\mathrm{d}x
\right)
^{\frac{1}{2}}
\left(
\int_{\mathbb{R}^{N}}
\frac{|u|^{2^{*}_{s,2\theta-\bar{\theta}}}}
{|x|^{2\theta-\bar{\theta}}}
\mathrm{d}x
\right)
^{\frac{1}{2}}.
\end{aligned}
\end{equation*}
\qed
\begin{lemma}\label{lemma22}
Let
$N\geqslant3$,
$\alpha\in(0,   N)$,
$s\in(0,1)$
and
$\zeta\in[0,\Lambda)$
hold.
Then we have
\begin{equation*}
\begin{aligned}
S_{0,\alpha}\geqslant
S_{\zeta,\alpha}
\geqslant
\left(1-\frac{\zeta}{\Lambda}\right)
S_{0,\alpha},
\end{aligned}
\end{equation*}
and
\begin{equation*}
\begin{aligned}
H_{0,\theta}
\geqslant
H_{\zeta,\theta}
\geqslant
\left(1-\frac{\zeta}{\Lambda}\right)
H_{0,\theta}.
\end{aligned}
\end{equation*}
\end{lemma}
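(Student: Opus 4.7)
The plan is to deduce both chains of inequalities directly from the two-sided comparison
\[
\left(1-\frac{\zeta}{\Lambda}\right)\|u\|_{D}^{2}\leqslant \|u\|_{\zeta}^{2}\leqslant \|u\|_{D}^{2},
\]
which is already established in the Preliminaries via the fractional Hardy inequality. Both $S_{\zeta,\alpha}$ and $H_{\zeta,\theta}$ are infima of a ratio whose numerator is $\|u\|_{\zeta}^{2}$ and whose denominator does not depend on $\zeta$; so any pointwise (in $u$) comparison between $\|u\|_{\zeta}^{2}$ and $\|u\|_{D}^{2}$ transfers directly to the infima.

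First I would establish the upper bounds $S_{\zeta,\alpha}\leqslant S_{0,\alpha}$ and $H_{\zeta,\theta}\leqslant H_{0,\theta}$. Since $\zeta\geqslant 0$ and $|u|^{2}/|x|^{2s}\geqslant 0$, the inequality $\|u\|_{\zeta}^{2}\leqslant\|u\|_{D}^{2}$ holds for every $u\in D^{s,2}(\mathbb{R}^{N})\setminus\{0\}$. Dividing by the (strictly positive) HLS-type denominator
\[
\left(\int_{\mathbb{R}^{N}}\int_{\mathbb{R}^{N}}\frac{|u(x)|^{2^{*}_{\alpha}}|u(y)|^{2^{*}_{\alpha}}}{|x-y|^{\alpha}}\,\mathrm{d}x\,\mathrm{d}y\right)^{\frac{1}{2^{*}_{\alpha}}}
\]
in the first case, or by $\bigl(\int_{\mathbb{R}^{N}}|u|^{2^{*}_{s,\theta}}/|x|^{\theta}\,\mathrm{d}x\bigr)^{\frac{2}{2^{*}_{s,\theta}}}$ in the second case, and taking the infimum over $u$ yields the desired upper bounds.

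Next I would establish the lower bounds. Using the fractional Hardy inequality $\Lambda\int_{\mathbb{R}^{N}}|u|^{2}/|x|^{2s}\,\mathrm{d}x\leqslant\|u\|_{D}^{2}$ together with $\zeta\in[0,\Lambda)$, we get $\|u\|_{\zeta}^{2}\geqslant(1-\zeta/\Lambda)\|u\|_{D}^{2}$. Dividing by the same denominator as above and taking the infimum over $u\neq 0$ gives
\[
S_{\zeta,\alpha}\geqslant \left(1-\tfrac{\zeta}{\Lambda}\right)S_{0,\alpha},\qquad H_{\zeta,\theta}\geqslant \left(1-\tfrac{\zeta}{\Lambda}\right)H_{0,\theta}.
\]

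There is really no serious obstacle here: the statement is a purely formal consequence of the norm comparison already recorded in the preliminaries, and the argument is the same for $S_{\zeta,\alpha}$ and $H_{\zeta,\theta}$ because in both cases $\zeta$ appears only through the numerator $\|u\|_{\zeta}^{2}$. The only point worth remarking is that we must verify the denominators are positive and finite on the class of admissible $u$, which is immediate since $u\in D^{s,2}(\mathbb{R}^{N})\setminus\{0\}$ combined with the Hardy--Sobolev embedding (respectively the embedding into the Coulomb--Sobolev space, cf.\ Lemma \ref{lemma5}) ensures the denominators lie in $(0,\infty)$, making the division legitimate.
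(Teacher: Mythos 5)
Your proof is correct, and it takes a genuinely more elementary route than the paper's. The paper's argument introduces the ratio $F_{\zeta}(u)$, invokes the fact that each infimum $S_{\zeta,\alpha}$ is \emph{attained} by some minimizer $u_{\zeta}$ (a nontrivial fact cited from the literature), evaluates $F_{\zeta_{2}}$ at $u_{\zeta_{1}}$ to compare levels, and then uses the \emph{non-attainment} of the best Hardy constant to conclude; as a by-product it obtains strict inequalities such as $S_{\zeta,\alpha}>\left(1-\frac{\zeta}{\Lambda}\right)S_{0,\alpha}$ for $\zeta>0$. You instead observe that $\zeta$ enters each quotient only through the numerator, apply the pointwise two-sided bound $\left(1-\frac{\zeta}{\Lambda}\right)\|u\|_{D}^{2}\leqslant\|u\|_{\zeta}^{2}\leqslant\|u\|_{D}^{2}$ for every admissible $u$, and pass to infima. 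This needs neither the existence of minimizers nor the non-attainment of the Hardy constant, and it handles $S_{\zeta,\alpha}$ and $H_{\zeta,\theta}$ in one uniform stroke (the paper only writes out the $S_{\zeta,\alpha}$ case). The only thing your argument gives up is the strictness of the inequalities for $\zeta>0$, but the lemma as stated claims only the non-strict versions, so your proof fully establishes it; your remark on the positivity and finiteness of the denominators (from $u\not\equiv 0$ together with the Hardy--Littlewood--Sobolev and Hardy--Sobolev inequalities) correctly disposes of the one point that needs checking.
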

\noindent
{\bf Proof.}
For
$\zeta\in[0,\Lambda)$
and
$u\in D^{s,2}(\mathbb{R}^{N})$,
$u\not\equiv0$.
We set
\begin{equation*}
\begin{aligned}
F_{\zeta}(u):=
\frac{\|u\|_{\zeta}^{2}}
{\left(
\int_{\mathbb{R}^{N}}
\int_{\mathbb{R}^{N}}
\frac{|u(x)|^{2^{*}_{\alpha}}|u(y)|^{2^{*}_{\alpha}}}{|x-y|^{\alpha}}
\mathrm{d}x
\mathrm{d}y
\right)^{\frac{1}{2^{*}_{\alpha}}}},
\end{aligned}
\end{equation*}
clearly,
for a fixed $u$,
$F_{\zeta}(u)$
is decreasing with respect to
$\zeta$.

Moreover, for any fixed
$\zeta\in[0,\Lambda)$,
we denote by
$u_{\zeta}\in D^{s,2}(\mathbb{R}^{N})$
a function such that \eqref{10} is achieved,
that is
$S_{\zeta,\alpha}=F_{\zeta}(u_{\zeta})$.

Let
$0<\zeta_{1}<\zeta_{2}<\Lambda$.
Then
\begin{equation*}
\begin{aligned}
S_{\zeta_{1},\alpha}=
F_{\zeta_{1}}(u_{\zeta_{1}})
=&
\frac{\|u_{\zeta_{1}}\|_{D}^{2}
-
\zeta_{1}
\int_{\mathbb{R}^{N}}
\frac{ |u_{\zeta_{1}}|^{2}}{|x|^{2s}}
\mathrm{d}x}
{\left(
\int_{\mathbb{R}^{N}}
\int_{\mathbb{R}^{N}}
\frac{|u_{\zeta_{1}}(x)|^{2^{*}_{\alpha}}|u_{\zeta_{1}}(y)|^{2^{*}_{\alpha}}}{|x-y|^{\alpha}}
\mathrm{d}x
\mathrm{d}y
\right)^{\frac{1}{2^{*}_{\alpha}}}}\\
>&
\frac{\|u_{\zeta_{1}}\|_{D}^{2}
-
\zeta_{2}
\int_{\mathbb{R}^{N}}
\frac{ |u_{\zeta_{1}}|^{2}}{|x|^{2s}}
\mathrm{d}x}
{\left(
\int_{\mathbb{R}^{N}}
\int_{\mathbb{R}^{N}}
\frac{|u_{\zeta_{1}}(x)|^{2^{*}_{\alpha}}|u_{\zeta_{1}}(y)|^{2^{*}_{\alpha}}}{|x-y|^{\alpha}}
\mathrm{d}x
\mathrm{d}y
\right)^{\frac{1}{2^{*}_{\alpha}}}}
\geqslant
S_{\zeta_{2},\alpha}.
\end{aligned}
\end{equation*}
Let
$0<\zeta<\Lambda$.
Since  the best constant in the Hardy inequality is not achieved,
we get
\begin{equation*}
\begin{aligned}
S_{\zeta,\alpha}=
F_{\zeta}(u_{\zeta})
>&
\left(1-\frac{\zeta}{\Lambda}\right)
\frac{\|u_{\zeta}\|_{D}^{2}}
{\left(
\int_{\mathbb{R}^{N}}
\int_{\mathbb{R}^{N}}
\frac{|u_{\zeta}(x)|^{2^{*}_{\alpha}}|u_{\zeta}(y)|^{2^{*}_{\alpha}}}{|x-y|^{\alpha}}
\mathrm{d}x
\mathrm{d}y
\right)^{\frac{1}{2^{*}_{\alpha}}}}\\
\geqslant&
\left(1-\frac{\zeta}{\Lambda}\right)
S_{0,\alpha}.
\end{aligned}
\end{equation*}
\qed
\section{The proof of theorem \ref{theorem2}}
In this section, we show the existence of nonnegative solution of problems $(\mathcal{P}_{2})$.

In Lemma \ref{lemma11}--Lemma \ref{lemma14},
we will prove some properties of the Nehari manifolds associated with
problems $(\mathcal{P}_{2})$ and $(\mathcal{P}_{4})$.
\begin{lemma}\label{lemma11}
Assume that the assumptions of Theorem
\ref{theorem2}
hold.
Then
$$c_{0}^{2}=\inf_{u\in\mathcal{N}^{2}}I_{2}(u)>0.$$
\end{lemma}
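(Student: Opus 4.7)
The plan is to exploit the defining Nehari identity together with the best-constant inequalities \eqref{10} and \eqref{11} (extended to $\zeta>0$ via Lemma \ref{lemma22}) to first show that $\|u\|_\zeta$ is bounded away from zero uniformly on $\mathcal{N}^2$, and then to read off $I_2(u)\ge c\|u\|_\zeta^2$ from the Nehari identity.

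\medskip

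\noindent
\textbf{Step 1 (rewriting $I_2$ on $\mathcal{N}^2$).} For $u\in\mathcal{N}^2$, set
\begin{equation*}
A(u):=\int_{\mathbb{R}^N}\!\!\int_{\mathbb{R}^N}\frac{|u(x)|^{2^*_\alpha}|u(y)|^{2^*_\alpha}}{|x-y|^\alpha}\,\mathrm{d}x\,\mathrm{d}y,\qquad
B_i(u):=\int_{\mathbb{R}^N}\frac{|u|^{2^*_{s,\theta_i}}}{|x|^{\theta_i}}\,\mathrm{d}x.
\end{equation*}
The Nehari condition $\langle I_2'(u),u\rangle=0$ gives $\|u\|_\zeta^2=A(u)+\sum_{i=1}^k B_i(u)$, so
\begin{equation*}
I_2(u)=\Bigl(\tfrac12-\tfrac{1}{2\cdot 2^*_\alpha}\Bigr)A(u)+\sum_{i=1}^{k}\Bigl(\tfrac12-\tfrac{1}{2^*_{s,\theta_i}}\Bigr)B_i(u).
\end{equation*}
All the coefficients are strictly positive because $2^*_\alpha>1$ (as $\alpha<N+2s$) and $2^*_{s,\theta_i}>2$ (as $\theta_i<2s$).

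\medskip

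\noindent
\textbf{Step 2 (lower bound on $\|u\|_\zeta$).} By the definitions \eqref{10}--\eqref{11} of $S_{\zeta,\alpha}$ and $H_{\zeta,\theta_i}$,
\begin{equation*}
A(u)\le S_{\zeta,\alpha}^{-2^*_\alpha}\,\|u\|_\zeta^{2\cdot 2^*_\alpha},\qquad
B_i(u)\le H_{\zeta,\theta_i}^{-2^*_{s,\theta_i}/2}\,\|u\|_\zeta^{2^*_{s,\theta_i}},
\end{equation*}
and by Lemma \ref{lemma22} both best constants are strictly positive. Substituting into the Nehari identity and dividing by $\|u\|_\zeta^2>0$ gives
\begin{equation*}
1\le S_{\zeta,\alpha}^{-2^*_\alpha}\|u\|_\zeta^{2(2^*_\alpha-1)}+\sum_{i=1}^{k}H_{\zeta,\theta_i}^{-2^*_{s,\theta_i}/2}\|u\|_\zeta^{2^*_{s,\theta_i}-2}.
\end{equation*}
Since every exponent on the right-hand side is strictly positive, the continuous function of $t=\|u\|_\zeta$ on the right tends to $0$ as $t\to 0^+$. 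Hence there exists $\rho>0$, depending only on $N,s,\zeta,\alpha,\theta_1,\dots,\theta_k$, such that $\|u\|_\zeta\ge\rho$ for every $u\in\mathcal{N}^2$.

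\medskip

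\noindent
\textbf{Step 3 (conclusion).} Setting $\kappa:=\min\bigl\{\tfrac12-\tfrac{1}{2\cdot 2^*_\alpha},\,\min_{1\le i\le k}(\tfrac12-\tfrac{1}{2^*_{s,\theta_i}})\bigr\}>0$, the identity from Step 1 and the Nehari constraint yield
\begin{equation*}
I_2(u)\ge \kappa\bigl(A(u)+\textstyle\sum_{i}B_i(u)\bigr)=\kappa\|u\|_\zeta^2\ge \kappa\rho^2>0,
\end{equation*}
which proves $c_0^2\ge\kappa\rho^2>0$.

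\medskip

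\noindent
\textbf{Main obstacle.} There is no serious analytic difficulty here; the only point that needs care is verifying that all the exponents appearing on the right in Step 2, namely $2(2^*_\alpha-1)$ and $2^*_{s,\theta_i}-2$ for $i=1,\dots,k$, are strictly positive. For the Hardy--Littlewood--Sobolev term this uses $\alpha\in(0,N)$, and for the Hardy--Sobolev terms it uses $\theta_i<2s$ from $(H_2)$. Once this is observed, the bootstrap from Nehari to a uniform lower bound on $\|u\|_\zeta$ is automatic, and the positivity of $c_0^2$ follows immediately.
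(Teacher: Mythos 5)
Your proposal is correct and follows essentially the same route as the paper: both use the Nehari identity together with the best constants $S_{\zeta,\alpha}$ and $H_{\zeta,\theta_i}$ (positive by Lemma \ref{lemma22}) to bound $\|u\|_{\zeta}$ away from zero on $\mathcal{N}^{2}$, and then control $I_{2}(u)$ from below by a positive multiple of $\|u\|_{\zeta}^{2}$. The only cosmetic differences are that the paper obtains the lower bound on $\|u\|_{\zeta}$ via an explicit case split on $\|u\|_{\zeta}\gtrless 1$ rather than your soft continuity argument, and it bounds $I_{2}(u)\geqslant(\tfrac12-\tfrac{1}{2^{*}_{s,\theta_{k}}})\|u\|_{\zeta}^{2}$ using the smallest exponent instead of your coefficient $\kappa$.
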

\noindent
{\bf Proof.}
\noindent
{\bf Step 1.}
We claim that
any limit point of a sequence in
$\mathcal{N}^{2}$
is different from zero.
According to
$\langle I^{'}_{2}(u),u\rangle=0$,
(\ref{10})
and
(\ref{11}),
for any $u\in\mathcal{N}^{2}$,
we obtain
\begin{equation*}
\begin{aligned}
0
=
\langle I^{'}_{2}(u),u\rangle
\geqslant&
\|u\|^{2}_{\zeta}
-
\frac{1}{S_{\zeta,\alpha} ^{2^{*}_{\alpha}}}
\|u\|_{\zeta}^{2\cdot 2^{*}_{\alpha} }
-
\sum_{i=1}^{k}
\frac{1}{H_{\zeta,\theta_{i}}^{\frac{2^{*}_{s,\theta_{i}} }{2}}}
\|u\|_{\zeta}^{2^{*}_{s,\theta_{i}} }.
\end{aligned}
\end{equation*}
From above expression,
we have
\begin{equation}\label{17}
\begin{aligned}
\|u\|^{2}_{\zeta}
\leqslant
\frac{1}{S_{\zeta,\alpha} ^{2^{*}_{\alpha}}}
\|u\|_{\zeta}^{2\cdot 2^{*}_{\alpha} }
+
\sum_{i=1}^{k}
\frac{1}{H_{\zeta,\theta_{i}}^{\frac{2^{*}_{s,\theta_{i}} }{2}}}
\|u\|_{\zeta}^{2^{*}_{s,\theta_{i}} }.
\end{aligned}
\end{equation}
Set
$$\kappa
:=
\frac{1}{S_{\zeta,\alpha} ^{2^{*}_{\alpha}}}
+
\sum_{i=1}^{k}
\frac{1}{H_{\zeta,\theta_{i}}^{\frac{2^{*}_{s,\theta_{i}} }{2}}}
.$$
Applying
(\ref{10})
and
(\ref{11}),
we get
$$
0<\kappa<\infty.$$
From
($H_{1}$),
we know
$$2^{*}_{s,\theta_{k}}<\cdots<2^{*}_{s,\theta_{1}}<2\cdot2^{*}_{\alpha}.$$
Now the proof of Step 1 is divided into two cases:
(i)
$\|u\|_{\zeta}\geqslant1$;
(ii)
$\|u\|_{\zeta}<1$.

\noindent
{\bf Case (i)}$\|u\|_{\zeta}\geqslant1$.
From
(\ref{15}),
we have
\begin{equation*}
\begin{aligned}
\|u\|_{\zeta}^{2}
\leqslant&
\kappa
\|u\|_{\zeta}^{2\cdot2^{*}_{\alpha}},
\end{aligned}
\end{equation*}
which implies that
\begin{equation}\label{18}
\begin{aligned}
\|u\|_{\zeta}
\geqslant
\kappa
^{\frac{1}{2-2\cdot2^{*}_{\alpha}}}.
\end{aligned}
\end{equation}
\noindent
{\bf Case (ii)}$\|u\|_{\zeta}<1$.
From
(\ref{17}),
we know
\begin{equation}\label{19}
\begin{aligned}
\|u\|_{\zeta}
\geqslant
\kappa
^{\frac{1}{2-2^{*}_{s,\theta_{k}} }}.
\end{aligned}
\end{equation}
Combining the Cases (i) and (ii),
according to
(\ref{18})
and
(\ref{19}),
we deduce that
\begin{equation}\label{20}
\begin{aligned}
\|u\|_{\zeta}
\geqslant
\begin{cases}
\kappa
^{\frac{1}{2-2\cdot2^{*}_{\alpha} }},
&\kappa<1,\\
\kappa
^{\frac{1}{2-2^{*}_{s,\theta_{k}} }},
&\kappa\geqslant1.
\end{cases}
\end{aligned}
\end{equation}
Hence,
we know that
any limit point of a sequence in $\mathcal{N}^{2}$ is different from zero.

\noindent
{\bf Step 2.}
Now,
we claim that
$I_{2}$
is bounded from below on
$\mathcal{N}^{2}$.
For any
$u\in \mathcal{N}^{2}$,
by using
(\ref{20}),
we get
\begin{equation*}
\begin{aligned}
I_{2}(u)
\geqslant&
\left(
\frac{1}{2}
-
\frac{1}{2^{*}_{s,\theta_{k}}}
\right)
\|u\|^{2}_{\zeta}
\geqslant
\begin{cases}
\left(
\frac{1}{2}
-
\frac{1}{2^{*}_{s,\theta_{k}}}
\right)
\kappa
^{\frac{2}{2-2\cdot2^{*}_{\alpha} }},
&\kappa\leqslant1,\\
\left(
\frac{1}{2}
-
\frac{1}{2^{*}_{s,\theta_{k}}}
\right)
\kappa
^{\frac{2}{2-2^{*}_{s,\theta_{k}}}},
&\kappa>1.
\end{cases}
\end{aligned}
\end{equation*}
Therefore,
$I_{2}$
is bounded from below on
$\mathcal{N}^{2}$,
and
$c_{0}^{2}>0$.
\qed
\begin{lemma}\label{lemma12}
Assume that the assumptions of Theorem
\ref{theorem2}
hold.
Then

\noindent
(i)
for each
$u\in D^{s,2}(\mathbb{R}^{N})\setminus\{0\}$,
there exists a unique
$t_{u}>0$
such that
$t_{u}u\in \mathcal{N}^{2}$;

\noindent
(ii)
$c_{0}^{2}=c_{1}^{2}=c^{2}>0$.
\end{lemma}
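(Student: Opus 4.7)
For part (i), the plan is to study the scalar function $g(t) := I_2(tu)$ for fixed $u \in D^{s,2}(\mathbb{R}^N)\setminus\{0\}$. Writing
\begin{equation*}
g(t) = \tfrac{t^2}{2}A - \tfrac{t^{2\cdot 2^{*}_{\alpha}}}{2\cdot 2^{*}_{\alpha}}B - \sum_{i=1}^{k}\tfrac{t^{2^{*}_{s,\theta_i}}}{2^{*}_{s,\theta_i}}C_i,
\end{equation*}
with $A := \|u\|_{\zeta}^2 > 0$, $B := \iint \tfrac{|u(x)|^{2^{*}_{\alpha}}|u(y)|^{2^{*}_{\alpha}}}{|x-y|^{\alpha}}\,dx\,dy \geq 0$, and $C_i := \int \tfrac{|u|^{2^{*}_{s,\theta_i}}}{|x|^{\theta_i}}\,dx \geq 0$, the condition $tu \in \mathcal{N}^2$ is equivalent to $A = t^{2\cdot 2^{*}_{\alpha}-2}B + \sum_i t^{2^{*}_{s,\theta_i}-2}C_i$. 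Under $(H_2)$ and $\alpha \in (0,N)$ one checks $2\cdot 2^{*}_{\alpha}-2 > 0$ and $2^{*}_{s,\theta_i}-2 > 0$, so the right-hand side is a strictly increasing function of $t$ going from $0$ at $t=0^+$ to $+\infty$ as $t\to\infty$. This yields a unique $t_u>0$ solving the equation, proving (i). The same monotonicity shows that $g'(t) > 0$ for $t < t_u$ and $g'(t) < 0$ for $t > t_u$, so $g$ attains its unique positive maximum at $t_u$, and $g(t) \to -\infty$ as $t\to\infty$ (the dominant term is the one with largest exponent).

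For part (ii), I would first establish $c_1^2 = c_0^2$. For any $u \neq 0$, from the preceding step $\max_{t\geq 0}I_2(tu) = I_2(t_u u)$, and $t_u u \in \mathcal{N}^2$. Conversely, if $v \in \mathcal{N}^2$, then uniqueness in (i) forces $t_v = 1$, so $I_2(v) = \max_{t\geq 0}I_2(tv)$. Consequently the map $u \mapsto t_u u$ is a surjection $D^{s,2}\setminus\{0\} \to \mathcal{N}^2$, and
\begin{equation*}
c_1^2 = \inf_{u\neq 0}\max_{t\geq 0}I_2(tu) = \inf_{u\neq 0}I_2(t_u u) = \inf_{v\in \mathcal{N}^2}I_2(v) = c_0^2.
\end{equation*}

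To show $c^2 = c_0^2$, I would prove two inequalities. For $c^2 \leq c_1^2$, given any $u \neq 0$ pick $T > t_u$ large enough that $I_2(Tu) < 0$ (possible since $g(t)\to-\infty$), and take the straight path $\Upsilon(\tau) = \tau T u$; then $\Upsilon \in \Gamma^2$ and $\max_{\tau\in[0,1]}I_2(\Upsilon(\tau)) = g(t_u)$, giving $c^2 \leq c_1^2$. For $c^2 \geq c_0^2$, the key step is to show every $\Upsilon \in \Gamma^2$ must cross $\mathcal{N}^2$. Define $\phi(w) := \langle I_2'(w),w\rangle$. For small $\|w\|_{\zeta}$, the quadratic term dominates and $\phi(w) > 0$; on the other hand, from the analysis of $g$ above, $\phi(v) < 0$ whenever $I_2(v) < 0$ (because such a $v$ lies beyond the maximizer $t_v < 1$). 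Hence $\phi(\Upsilon(\tau)) > 0$ for $\tau$ near $0$ and $\phi(\Upsilon(1)) < 0$, so by continuity there exists $\tau^*\in(0,1)$ with $\Upsilon(\tau^*)\in\mathcal{N}^2$, whence $\max_{\tau}I_2(\Upsilon(\tau))\geq c_0^2$. Combining the two inequalities with Lemma \ref{lemma11} yields $c_0^2 = c_1^2 = c^2 > 0$.

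The main obstacle I anticipate is the $c^2 \geq c_0^2$ direction: one must argue carefully that $\phi$ changes sign along every admissible path and that the crossing point is nonzero. The delicate part is ruling out that the path remains in a region where $\phi > 0$ while $I_2$ is driven below $0$ --- this is handled precisely by the fact that the unique maximizer $t_u$ of $g$ lies strictly before the first sign change of $g$, so once $I_2$ becomes negative one is necessarily past the Nehari crossing. The verification that all the exponents $2\cdot 2^{*}_{\alpha},\,2^{*}_{s,\theta_i}$ exceed $2$ under the hypotheses (and the resulting correct asymptotics of $g$) is routine but essential for the above structure to hold.
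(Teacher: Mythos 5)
Your proposal is correct: the paper omits this proof entirely, deferring to the standard fibering-map/Nehari-manifold argument of Theorems 4.1 and 4.2 in Willem's \emph{Minimax Theorems}, and your plan reproduces exactly that argument --- uniqueness of $t_{u}$ from the strict monotonicity of $t\mapsto t^{2\cdot 2^{*}_{\alpha}-2}B+\sum_{i}t^{2^{*}_{s,\theta_{i}}-2}C_{i}$ (all exponents exceeding $0$ since $\alpha<N+2s$ and $\theta_{i}<2s$), followed by the identification $c_{0}^{2}=c_{1}^{2}=c^{2}$ via rays and the sign change of $\langle I_{2}'(\Upsilon(\tau)),\Upsilon(\tau)\rangle$ along every admissible path. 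No discrepancy with the paper's (cited) approach.
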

\noindent
{\bf Proof.}
The proof is standard, so we sketch it. Further details can be derived as in the proofs of Theorem 4.1 and 4.2 in
\cite{Willem1996},
we omit it.
\qed

Similar to Lemma \ref{lemma11} and Lemma \ref{lemma12},
we also have the following result.
\begin{lemma}\label{lemma14}
Assume that the assumptions of Theorem
\ref{theorem2}
hold.
For any
$u\in\mathcal{N}^{3}$,
we have
$$\|u\|_{D}^{2}\geqslant S_{0,\alpha}^{\frac{2N-\alpha}{N+2s-\alpha}},$$
and
$$c_{0}^{3}=\inf_{u\in\mathcal{N}^{3}}I_{3}(u)>0.$$
\end{lemma}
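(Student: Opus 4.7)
The plan is to exploit the Nehari identity directly on $\mathcal{N}^{3}$ to reduce $\|u\|_{D}^{2}$ to a single power of itself, and then invoke the variational characterization of $S_{0,\alpha}$. This is considerably simpler than the proof of Lemma \ref{lemma11} because $I_{3}$ contains only the Hartree critical term and no Hardy potential or Hardy--Sobolev terms, so no case analysis on $\|u\|$ being large or small is needed.

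First, for any $u\in\mathcal{N}^{3}$, I would write out $\langle I_{3}'(u),u\rangle=0$, which gives the Nehari identity
\begin{equation*}
\|u\|_{D}^{2}=\int_{\mathbb{R}^{N}}\int_{\mathbb{R}^{N}}\frac{|u(x)|^{2^{*}_{\alpha}}|u(y)|^{2^{*}_{\alpha}}}{|x-y|^{\alpha}}\,\mathrm{d}x\,\mathrm{d}y.
\end{equation*}
Plugging this into the definition \eqref{10} of $S_{0,\alpha}$ (taking $\zeta=0$) yields
\begin{equation*}
S_{0,\alpha}\leqslant\frac{\|u\|_{D}^{2}}{\|u\|_{D}^{2/2^{*}_{\alpha}}}=\|u\|_{D}^{2(1-1/2^{*}_{\alpha})}.
\end{equation*}

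Second, I would raise this to the power $\frac{2\cdot 2^{*}_{\alpha}}{2^{*}_{\alpha}-1}$ and simplify using $2^{*}_{\alpha}-1=\frac{N+2s-\alpha}{N-2s}$ together with $2^{*}_{\alpha}=\frac{2N-\alpha}{N-2s}$, so that $\frac{2^{*}_{\alpha}}{2^{*}_{\alpha}-1}=\frac{2N-\alpha}{N+2s-\alpha}$, obtaining
\begin{equation*}
\|u\|_{D}^{2}\geqslant S_{0,\alpha}^{\frac{2N-\alpha}{N+2s-\alpha}},
\end{equation*}
which is the first assertion.

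Third, I would substitute the Nehari identity back into $I_{3}(u)$ to rewrite the energy on the manifold as
\begin{equation*}
I_{3}(u)=\left(\frac{1}{2}-\frac{1}{2\cdot 2^{*}_{\alpha}}\right)\|u\|_{D}^{2}.
\end{equation*}
Since $2^{*}_{\alpha}>1$, the prefactor is strictly positive, and combining with the just-established lower bound gives $c_{0}^{3}\geqslant\bigl(\frac{1}{2}-\frac{1}{2\cdot 2^{*}_{\alpha}}\bigr)S_{0,\alpha}^{\frac{2N-\alpha}{N+2s-\alpha}}>0$. There is no real obstacle here; the only point of minor care is matching the exponent $\frac{2N-\alpha}{N+2s-\alpha}$ with the algebraic identity for $2^{*}_{\alpha}/(2^{*}_{\alpha}-1)$, which is purely bookkeeping.
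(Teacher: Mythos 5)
Your proof is correct and is exactly the specialization to $I_{3}$ of the argument the paper invokes (it gives no separate proof, only "similar to Lemma \ref{lemma11} and Lemma \ref{lemma12}"): the Nehari identity $\|u\|_{D}^{2}=\int\int\frac{|u(x)|^{2^{*}_{\alpha}}|u(y)|^{2^{*}_{\alpha}}}{|x-y|^{\alpha}}\,\mathrm{d}x\,\mathrm{d}y$ combined with the definition of $S_{0,\alpha}$, and the single critical term indeed removes any need for the case analysis of Lemma \ref{lemma11}. Your exponent bookkeeping checks out, and your explicit lower bound $c_{0}^{3}\geqslant\frac{N+2s-\alpha}{2(2N-\alpha)}S_{0,\alpha}^{\frac{2N-\alpha}{N+2s-\alpha}}$ is consistent with the value of $c_{0}^{3}$ used later in Lemma \ref{lemma17}.
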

We show that the functional $I_{2}$ satisfies the Mountain--Pass geometry, and
estimate the Mountain--Pass levels.
\begin{lemma}\label{lemma15}
Assume that the assumptions of Theorem
\ref{theorem2}
hold.
Then
there exists a
$(PS)_{c^{2}}$
sequence of
$I_{2}$
at level
$c^{2}$,
where
$$0<c^{2}<c^{2,*}=
\min
\left\{
\frac{N+2s-\alpha}{2(2N-\alpha)}
S_{\zeta,\alpha}^{\frac{2N-\alpha}{N+2s-\alpha}}
,
\frac{2s-\theta_{1}}{2(N-\theta_{1})}
H_{\zeta,\theta_{1}} ^{\frac{N-\theta_{1}}{2s-\theta_{1}}}
,
\ldots
,
\frac{2s-\theta_{k}}{2(N-\theta_{k})}
H_{\zeta,\theta_{k}} ^{\frac{N-\theta_{k}}{2s-\theta_{k}}}
\right\}.$$
\end{lemma}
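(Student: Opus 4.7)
The plan is to verify the Mountain-Pass geometry of $I_{2}$, apply the Mountain-Pass theorem without the Palais-Smale condition to extract a $(PS)_{c^{2}}$ sequence, and then estimate $c^{2}$ from above by testing the path definition against explicit extremals of the sharp constants $S_{\zeta,\alpha}$ and $H_{\zeta,\theta_{i}}$.

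\textbf{Step 1 (geometry).} From (\ref{10}), (\ref{11}) and the same calculation used in the proof of Lemma \ref{lemma11}, every $u\in D^{s,2}(\mathbb{R}^{N})$ satisfies
\begin{equation*}
I_{2}(u)\geqslant \frac{1}{2}\|u\|_{\zeta}^{2}-\frac{1}{2\cdot 2^{*}_{\alpha}\,S_{\zeta,\alpha}^{2^{*}_{\alpha}}}\|u\|_{\zeta}^{2\cdot 2^{*}_{\alpha}}-\sum_{i=1}^{k}\frac{1}{2^{*}_{s,\theta_{i}}\,H_{\zeta,\theta_{i}}^{2^{*}_{s,\theta_{i}}/2}}\|u\|_{\zeta}^{2^{*}_{s,\theta_{i}}}.
\end{equation*}
Since $2^{*}_{s,\theta_{i}}>2$ and $2\cdot 2^{*}_{\alpha}>2$, there exist $\rho,\delta>0$ with $I_{2}\geqslant\delta$ on the sphere $\|u\|_{\zeta}=\rho$. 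For any fixed $\varphi\not\equiv 0$ the exponent $2\cdot 2^{*}_{\alpha}$ strictly exceeds every other exponent appearing in $I_{2}$ (because $\alpha<N<N+\theta_{i}$), so $I_{2}(T\varphi)\to -\infty$ as $T\to\infty$; selecting $T$ with $I_{2}(T\varphi)<0$ and invoking the Ekeland/deformation version of the Mountain-Pass theorem yields a $(PS)_{c^{2}}$ sequence with $c^{2}\geqslant\delta>0$.

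\textbf{Step 2 (upper bound).} Let $U_{0}$ attain $S_{\zeta,\alpha}$ (existence cited after (\ref{10})) and set $g(t):=I_{2}(tU_{0})$ for $t>0$. Because the Hardy--Sobolev integrals of $U_{0}$ are strictly positive,
\begin{equation*}
g(t)<h_{0}(t):=\frac{t^{2}}{2}\|U_{0}\|_{\zeta}^{2}-\frac{t^{2\cdot 2^{*}_{\alpha}}}{2\cdot 2^{*}_{\alpha}}\int_{\mathbb{R}^{N}}\int_{\mathbb{R}^{N}}\frac{|U_{0}(x)|^{2^{*}_{\alpha}}|U_{0}(y)|^{2^{*}_{\alpha}}}{|x-y|^{\alpha}}\mathrm{d}x\mathrm{d}y.
\end{equation*}
An elementary one-variable maximization together with the extremality of $U_{0}$ gives $\sup_{t\geqslant 0}h_{0}(t)=\frac{N+2s-\alpha}{2(2N-\alpha)}S_{\zeta,\alpha}^{(2N-\alpha)/(N+2s-\alpha)}$. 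Both $g$ and $h_{0}$ are continuous on $[0,\infty)$, vanish at $0$ and diverge to $-\infty$ at $\infty$, so each attains its maximum at an interior point; the pointwise strict inequality then forces $\sup g<\sup h_{0}$. Choosing $T_{0}$ past the maximizer of $g$ and large enough that $I_{2}(T_{0}U_{0})<0$, the path $\Upsilon(t)=tT_{0}U_{0}$ delivers $c^{2}<\frac{N+2s-\alpha}{2(2N-\alpha)}S_{\zeta,\alpha}^{(2N-\alpha)/(N+2s-\alpha)}$. An identical argument with an extremal $U_{i}$ of $H_{\zeta,\theta_{i}}$ (existence cited after (\ref{11})) and the comparison $I_{2}(tU_{i})<\frac{t^{2}}{2}\|U_{i}\|_{\zeta}^{2}-\frac{t^{2^{*}_{s,\theta_{i}}}}{2^{*}_{s,\theta_{i}}}\int_{\mathbb{R}^{N}}\frac{|U_{i}|^{2^{*}_{s,\theta_{i}}}}{|x|^{\theta_{i}}}\mathrm{d}x$ yields $c^{2}<\frac{2s-\theta_{i}}{2(N-\theta_{i})}H_{\zeta,\theta_{i}}^{(N-\theta_{i})/(2s-\theta_{i})}$ for every $i$. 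Intersecting these bounds produces $c^{2}<c^{2,*}$.

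\textbf{Main obstacle.} The only delicate point is that the strict comparison $g<h$ must survive passage to the supremum. This is safe because both $g$ and the auxiliary functions $h_{0},h_{i}$ attain their positive maxima at interior points of $(0,\infty)$, and because their difference is a sum of monomials that are strictly positive for $t>0$. Hence the inequality is strict at the maximizer of $g$ and $\sup g<\sup h$ follows. The remaining ingredients -- existence and non-triviality of the extremals, and dominance of the Choquard exponent in the descent to $-\infty$ -- come from the cited works and from hypothesis $(H_{2})$.
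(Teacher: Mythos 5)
Your proof is correct and follows exactly the standard route that the paper itself invokes by citing \cite{Pucci2009}: verify the mountain-pass geometry to extract a $(PS)_{c^{2}}$ sequence, then bound $c^{2}$ from above by testing along rays through the extremals of $S_{\zeta,\alpha}$ and of each $H_{\zeta,\theta_{i}}$ (whose existence the paper records after \eqref{10} and \eqref{11}), where dropping the remaining strictly positive terms and maximizing the resulting two-term functions yields each entry of the minimum. The paper omits these details entirely, so your write-up simply supplies them; the computation of $\sup_{t}h_{0}$ and $\sup_{t}h_{i}$ and the argument that strict pointwise comparison at the interior maximizer of $g$ gives $\sup g<\sup h$ are both sound.
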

\noindent
{\bf Proof.}
The proof is standard, so we sketch it. Further details can be derived as in the proofs of Theorem 2 in \cite{Pucci2009}, we omit it.
\qed

The following result implies the non--vanishing of
$(PS)_{c^{2}}$
sequence.
\begin{lemma}\label{lemma16}
Assume that the assumptions of Theorem
\ref{theorem2}
hold.
Let
$\{u_{n}\}$
be a
$(PS)_{c^{2}}$
sequence of
$I_{2}$
with
$c^{2}\in(0,c^{2,*})$,
then
\begin{equation*}
\begin{aligned}
\lim_{n\rightarrow\infty}
\int_{\mathbb{R}^{N}}
\int_{\mathbb{R}^{N}}
\frac{|u_{n}(x)|^{2^{*}_{\alpha}}|u_{n}(y)|^{2^{*}_{\alpha}}}{|x-y|^{\alpha}}
\mathrm{d}x
\mathrm{d}y>0,
\end{aligned}
\end{equation*}
and
\begin{equation*}
\begin{aligned}
\lim_{n\rightarrow\infty}
\int_{\mathbb{R}^{N}}
\frac{|u_{n}|^{2^{*}_{s,\theta_{i}}}}
{|x|^{\theta_{i}}}
\mathrm{d}x
>0,(i=1,\ldots,k).
\end{aligned}
\end{equation*}
\end{lemma}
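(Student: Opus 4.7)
The plan is to argue by contradiction in two stages, reducing each vanishing scenario either to $c^2=0$ or to a lower bound on $c^2$ that violates $c^2<c^{2,*}$. First, the standard combination
\begin{equation*}
I_2(u_n)-\frac{1}{2^*_{s,\theta_k}}\langle I_2'(u_n),u_n\rangle
\end{equation*}
is a sum of positive multiples of $\|u_n\|_\zeta^2$, of the Choquard integral and of each Hardy--Sobolev integral (the coefficients are positive since $2<2^*_{s,\theta_k}<\min\{2\cdot 2^*_\alpha,\,2^*_{s,\theta_i}:i<k\}$), so it shows that $\{u_n\}$ is bounded in $D^{s,2}(\mathbb{R}^N)$. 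Passing to a subsequence and setting
\begin{equation*}
\bar A:=\lim_n\int_{\mathbb{R}^N}\!\int_{\mathbb{R}^N}\frac{|u_n(x)|^{2^*_\alpha}|u_n(y)|^{2^*_\alpha}}{|x-y|^\alpha}\,\mathrm{d}x\mathrm{d}y,\qquad \bar B_i:=\lim_n\int_{\mathbb{R}^N}\frac{|u_n|^{2^*_{s,\theta_i}}}{|x|^{\theta_i}}\,\mathrm{d}x,
\end{equation*}
routine manipulation of the Palais--Smale identities delivers $\lim_n\|u_n\|_\zeta^2=\bar A+\sum_i\bar B_i$ together with
\begin{equation*}
c^2=\frac{N+2s-\alpha}{2(2N-\alpha)}\bar A+\sum_{i=1}^{k}\frac{2s-\theta_i}{2(N-\theta_i)}\bar B_i.
\end{equation*}

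For the Choquard term, suppose $\bar A=0$. Lemma \ref{lemma7} with $\theta=\theta_i$ bounds each $\int |u_n|^{2^*_{s,\theta_i}}/|x|^{\theta_i}$ by a bounded power of $\|u_n\|_D$ times the positive power $(\int\!\int\ldots)^{(2s-\theta_i)/(N+2s-\alpha)}\to 0$; hence $\bar B_i=0$ for every $i$, and the $c^2$ formula forces $c^2=0$, contradicting $c^2>0$.

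For each Hardy--Sobolev term, suppose $\bar B_{i_0}=0$ for some $i_0\in\{1,\ldots,k\}$. The crucial claim is that this single vanishing forces $\bar B_j=0$ for \emph{every} $j$, reducing matters to the first scenario. Upward-in-$\theta$ propagation uses Lemma \ref{lemma10}: for $j>i_0$ we have $2\theta_j-\theta_{i_0}\le 2\theta_k-\theta_1<2s$ by $(H_2)$, so
\begin{equation*}
\int_{\mathbb{R}^N}\frac{|u_n|^{2^*_{s,\theta_j}}}{|x|^{\theta_j}}\mathrm{d}x\le\left(\int_{\mathbb{R}^N}\frac{|u_n|^{2^*_{s,\theta_{i_0}}}}{|x|^{\theta_{i_0}}}\mathrm{d}x\right)^{1/2}\!\left(\int_{\mathbb{R}^N}\frac{|u_n|^{2^*_{s,2\theta_j-\theta_{i_0}}}}{|x|^{2\theta_j-\theta_{i_0}}}\mathrm{d}x\right)^{1/2},
\end{equation*}
where the second factor is bounded by the Hardy--Sobolev inequality and the boundedness of $\|u_n\|_D$. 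Downward-in-$\theta$ propagation uses Lemma \ref{lemma9}: for $j<i_0$, interpolating between $\theta_{i_0}$ and $L^{2^*_s}(\mathbb{R}^N)$ (whose norm is bounded via the Sobolev inequality) again yields $\bar B_j=0$. With all $\bar B_i=0$ the formula reduces to $c^2=\frac{N+2s-\alpha}{2(2N-\alpha)}\bar A$, while $\lim_n\|u_n\|_\zeta^2=\bar A$ combined with $\|u_n\|_\zeta^2\ge S_{\zeta,\alpha}(\int\!\int\ldots)^{1/2^*_\alpha}$ forces $\bar A\ge S_{\zeta,\alpha}^{(2N-\alpha)/(N+2s-\alpha)}$ (when $\bar A>0$), whence $c^2\ge c^{2,*}$, contradicting the hypothesis; the remaining case $\bar A=0$ gives $c^2=0$, also excluded.

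The main obstacle is the propagation step: a vanishing $\bar B_{i_0}$ sitting at an interior index of $\{1,\ldots,k\}$ must be spread to every other index, in both directions of the ordering of the $\theta_i$. This is exactly why both Lemma \ref{lemma9} and Lemma \ref{lemma10} are needed, and it is the extra condition $2\theta_k-\theta_1<2s$ in $(H_2)$ that keeps the auxiliary Hardy--Sobolev exponent $2\theta_j-\theta_{i_0}$ inside the admissible range $(0,2s)$ when Lemma \ref{lemma10} is applied.
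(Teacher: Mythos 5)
Your proposal is correct and follows essentially the same route as the paper: the same contradiction structure, the same propagation of a single vanishing Hardy--Sobolev term to all the others via Lemma \ref{lemma10} (upward in $\theta$, using $2\theta_k-\theta_1<2s$) and Lemma \ref{lemma9} (downward), the same use of Lemma \ref{lemma7} to kill the Hardy--Sobolev terms when the Choquard term vanishes, and the same lower bound via $S_{\zeta,\alpha}$ against $c^{2,*}$. The only differences are organizational: you treat a generic index $i_0$ at once where the paper splits into the cases $\theta_1$, $\theta_k$, interior $\theta_j$, and Choquard, and you package the Palais--Smale identities into an explicit formula for $c^2$ rather than deriving the bound $c^{2}+o(1)=\frac{N+2s-\alpha}{2(2N-\alpha)}\|u_n\|_{\zeta}^{2}$ case by case.
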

\noindent
{\bf Proof.}
The proof of this Lemma is divided into four cases:

\noindent
(1)
$
\lim_{n\rightarrow\infty}
\int_{\mathbb{R}^{N}}
\frac{|u_{n}|^{2^{*}_{s,\theta_{1}}}}
{|x|^{\theta_{1}}}
\mathrm{d}x
>0
$;

\noindent
(2)
$
\lim_{n\rightarrow\infty}
\int_{\mathbb{R}^{N}}
\frac{|u_{n}|^{2^{*}_{s,\theta_{k}}}}
{|x|^{\theta_{k}}}
\mathrm{d}x
>0
$;

\noindent
(3)
$
\lim_{n\rightarrow\infty}
\int_{\mathbb{R}^{N}}
\frac{|u_{n}|^{2^{*}_{s,\theta_{j}}}}
{|x|^{\theta_{j}}}
\mathrm{d}x
>0
$,
$(j=2,\ldots,k-1)$;

\noindent
(4)
$
\lim_{n\rightarrow\infty}
\int_{\mathbb{R}^{N}}
\int_{\mathbb{R}^{N}}
\frac{|u_{n}(x)|^{2^{*}_{\alpha}}|u_{n}(y)|^{2^{*}_{\alpha}}}{|x-y|^{\alpha}}
\mathrm{d}x
\mathrm{d}y>0
$.

\noindent
{\bf Case 1.}
It is easy to see that
$\{u_{n}\}$
is uniformly bounded in
$D^{s,2}(\mathbb{R}^{N})$.
Suppose  on the contrary that
\begin{equation}\label{21}
\begin{aligned}
\lim_{n\rightarrow\infty}
\int_{\mathbb{R}^{N}}
\frac{|u_{n}|^{2^{*}_{s,\theta_{1}}}}
{|x|^{\theta_{1}}}
\mathrm{d}x
=0.
\end{aligned}
\end{equation}
From
($H_{2}$),
we know
\begin{equation}\label{22}
\begin{aligned}
0<2\theta_{2}-\theta_{1}<\cdots<2\theta_{k}-\theta_{1}<2s.
\end{aligned}
\end{equation}
Since
$\{u_{n}\}$
is uniformly bounded in
$D^{s,2}(\mathbb{R}^{N})$,
there exists a constant
$0<C<\infty$
such that
$\|u_{n}\|_{D}\leqslant C$.
Applying
(\ref{22})
and
(\ref{11}),
we obtain
\begin{equation}\label{23}
\begin{aligned}
\lim_{n\rightarrow\infty}
\int_{\mathbb{R}^{N}}
\frac{|u_{n}|^{2^{*}_{s,2\theta_{i}-\theta_{1}}}}
{|x|^{2\theta_{i}-\theta_{1}}}
\mathrm{d}x
\leqslant C,
(i=2,\ldots,k).
\end{aligned}
\end{equation}
According to
Lemma
\ref{lemma10},
(\ref{21})
and
(\ref{23}),
we obtain
\begin{equation}\label{24}
\begin{aligned}
&\lim_{n\rightarrow\infty}
\int_{\mathbb{R}^{N}}
\frac{|u_{n}|^{2^{*}_{s,\theta_{i}}}}
{|x|^{\theta_{i}}}
\mathrm{d}x\\
\leqslant&
\left(
\lim_{n\rightarrow\infty}
\int_{\mathbb{R}^{N}}
\frac{|u_{n}|^{2^{*}_{s,\theta_{1}}}}
{|x|^{\theta_{1}}}
\mathrm{d}x
\right)
^{\frac{1}{2}}
\left(
\lim_{n\rightarrow\infty}
\int_{\mathbb{R}^{N}}
\frac{|u_{n}|^{2^{*}_{s,2\theta_{i}-\theta_{1}}}}
{|x|^{2\theta_{i}-\theta_{1}}}
\mathrm{d}x
\right)
^{\frac{1}{2}}\\
=&0
~(i=2,\ldots,k).
\end{aligned}
\end{equation}
By using
(\ref{21}),
(\ref{24})
and the definition of
$(PS)_{c^{2}}$
sequence,
we obtain
$$
c^{2}
+
o(1)
=
\frac{1}{2}
\|u_{n}\|_{\zeta}^{2}
-
\frac{1}{2\cdot2^{*}_{\alpha}}
\int_{\mathbb{R}^{N}}
\int_{\mathbb{R}^{N}}
\frac{|u_{n}(x)|^{2^{*}_{\alpha}}|u_{n}(y)|^{2^{*}_{\alpha}}}{|x-y|^{\alpha}}
\mathrm{d}x
\mathrm{d}y,$$
and
$$o(1)
=
\|u_{n}\|_{\zeta}^{2}
-
\int_{\mathbb{R}^{N}}
\int_{\mathbb{R}^{N}}
\frac{|u_{n}(x)|^{2^{*}_{\alpha}}|u_{n}(y)|^{2^{*}_{\alpha}}}{|x-y|^{\alpha}}
\mathrm{d}x
\mathrm{d}y.
$$
These yield
$$
c^{2}
+
o(1)
=
\frac{N+2s-\alpha}{2(2N-\alpha)}
\|u_{n}\|_{\zeta}^{2}.$$
Moreover,
\begin{equation*}
\begin{aligned}
S_{\zeta,\alpha}
\left(
\int_{\mathbb{R}^{N}}
\int_{\mathbb{R}^{N}}
\frac{|u_{n}(x)|^{2^{*}_{\alpha}}|u_{n}(y)|^{2^{*}_{\alpha}}}{|x-y|^{\alpha}}
\mathrm{d}x
\mathrm{d}y
\right)^{\frac{1}{2^{*}_{\alpha}}}
\leqslant
\|u_{n}\|^{2}_{\zeta},
\end{aligned}
\end{equation*}
which implies that
\begin{equation*}
\begin{aligned}
S_{\zeta,\alpha}^{\frac{2N-\alpha}{N+2s-\alpha}}
\leqslant
\|u_{n}\|^{2}_{\zeta}.
\end{aligned}
\end{equation*}
Therefore,
we obtain
\begin{equation*}
\begin{aligned}
\frac{N+2s-\alpha}{2(2N-\alpha)}
S_{\zeta,\alpha}^{\frac{2N-\alpha}{N+2s-\alpha}}
\leqslant
c^{2}.
\end{aligned}
\end{equation*}
This is a contradiction.

\noindent
{\bf Case 2.}
Suppose  on the contrary that
\begin{equation}\label{25}
\begin{aligned}
\lim_{n\rightarrow\infty}
\int_{\mathbb{R}^{N}}
\frac{|u_{n}|^{2^{*}_{s,\theta_{k}}}}
{|x|^{\theta_{k}}}
\mathrm{d}x
=0.
\end{aligned}
\end{equation}
By using
(\ref{10})
and
$\|u_{n}\|_{D}\leqslant C$,
we have
\begin{equation}\label{26}
\begin{aligned}
\lim_{n\rightarrow\infty}
\int_{\mathbb{R}^{N}}
|u_{n}|^{2^{*}_{s}}
\mathrm{d}x
\leqslant C.
\end{aligned}
\end{equation}
Applying
($H_{2}$),
Lemma
\ref{lemma9},
(\ref{25})
and
(\ref{26}),
we obtain
\begin{equation}\label{27}
\begin{aligned}
&
\lim_{n\rightarrow\infty}
\int_{\mathbb{R}^{N}}
\frac{|u_{n}|^{2^{*}_{s,\theta_{i}}}}
{|x|^{\theta_{i}}}
\mathrm{d}x\\
\leqslant&
\left(
\lim_{n\rightarrow\infty}
\int_{\mathbb{R}^{N}}
\frac{|u_{n}|^{2^{*}_{s,\theta_{k}}}}
{|x|^{\theta_{k}}}
\mathrm{d}x
\right)
^{\frac{\theta_{i}}{\theta_{k}}}
\left(
\lim_{n\rightarrow\infty}
\int_{\mathbb{R}^{N}}
|u_{n}|^{2^{*}_{s}}
\mathrm{d}x
\right)
^{\frac{\theta_{k}-\theta_{i}}{\theta_{k}}}\\
=&0
~(i=1,\ldots,k-1).
\end{aligned}
\end{equation}
By using
(\ref{25}),
(\ref{27})
and the definition of
$(PS)_{c^{2}}$
sequence,
similar to Case 1,
we get
$
\frac{N+2s-\alpha}{2(2N-\alpha)}
S_{\alpha}^{\frac{2N-\alpha}{N+2s-\alpha}}
\leqslant
c^{2}.
$
This is a contradiction.

\noindent
{\bf Case 3.}
Set
$j\in [2,k-1]$.
Suppose on the contrary that
\begin{equation}\label{}
\begin{aligned}
\lim_{n\rightarrow\infty}
\int_{\mathbb{R}^{N}}
\frac{|u_{n}|^{2^{*}_{s,\theta_{j}}}}
{|x|^{\theta_{j}}}
\mathrm{d}x
=0.
\end{aligned}
\end{equation}
From
($H_{2}$),
we know
\begin{equation*}
\begin{aligned}
0<2\theta_{j+1}-\theta_{j}<\cdots<2\theta_{k}-\theta_{j}<2\theta_{k}-\theta_{1}<2s.
\end{aligned}
\end{equation*}
Similar to
\eqref{24},
we obtain
\begin{equation*}
\begin{aligned}
\lim_{n\rightarrow\infty}
\int_{\mathbb{R}^{N}}
\frac{|u_{n}|^{2^{*}_{s,\theta_{i}}}}
{|x|^{\theta_{i}}}
\mathrm{d}x
=0
~(i=j+1,\ldots,k).
\end{aligned}
\end{equation*}
Similar to \eqref{27},
we have
\begin{equation*}
\begin{aligned}
\lim_{n\rightarrow\infty}
\int_{\mathbb{R}^{N}}
\frac{|u_{n}|^{2^{*}_{s,\theta_{i}}}}
{|x|^{\theta_{i}}}
\mathrm{d}x
=&0
~(i=1,\ldots,j-1).
\end{aligned}
\end{equation*}
Similar to Case 1,
we get
$
\frac{N+2s-\alpha}{2(2N-\alpha)}
S_{\alpha}^{\frac{2N-\alpha}{N+2s-\alpha}}
\leqslant
c^{2}.
$
This is a contradiction.

\noindent
{\bf Case 4.}
Suppose  on the contrary that
\begin{equation}\label{28}
\begin{aligned}
\lim_{n\rightarrow\infty}
\int_{\mathbb{R}^{N}}
\int_{\mathbb{R}^{N}}
\frac{|u_{n}(x)|^{2^{*}_{\alpha}}|u_{n}(y)|^{2^{*}_{\alpha}}}{|x-y|^{\alpha}}
\mathrm{d}x
\mathrm{d}y
=0.
\end{aligned}
\end{equation}
By using
$\|u_{n}\|_{D}\leqslant C$,
(\ref{10})
and the definition of
fractional Coulomb--Sobolev space,
we obtain
$u_{n}\in \mathcal{E}^{s,\alpha,2^{*}_{\alpha}}(\mathbb{R}^{N})$.
Applying Lemma \ref{lemma7} and (\ref{28}),
we have
\begin{equation}\label{29}
\begin{aligned}
&
\lim_{n\rightarrow\infty}
\int_{\mathbb{R}^{N}}
\frac{|u_{n}|^{2^{*}_{s,\theta_{i}}}}
{|x|^{\theta_{i}}}
\mathrm{d}x\\
\leqslant&
C
\left(
\lim_{n\rightarrow\infty}
\int_{\mathbb{R}^{N}}
\int_{\mathbb{R}^{N}}
\frac{|u_{n}(x)|^{2^{*}_{\alpha}}|u_{n}(y)|^{2^{*}_{\alpha}}}{|x-y|^{\alpha}}
\mathrm{d}x
\mathrm{d}y
\right)
^{\frac{2s-\theta_{i}}{N+2s-\alpha}}
=0
~(i=1,\ldots,k).
\end{aligned}
\end{equation}
Applying
(\ref{28})
and
(\ref{29}),
we get
$$
c^{2}
+
o(1)
=
\frac{1}{2}
\|u_{n}\|_{\zeta}^{2},
$$
and
$$o(1)
=
\|u_{n}\|_{\zeta}^{2},
$$
which imply that
$c^{2}=0$.
This contradicts with $c^{2}>0$.
\qed

In next lemma,
we show that
$c^{3}_{0}>c_{0}^{2}$.
This result plays a key role in the proof of Theorem \ref{theorem2}.
\begin{lemma}\label{lemma17}
Assume that the assumptions of Theorem
\ref{theorem2}
hold.
Then $c^{3}_{0}>c_{0}^{2}$.
\end{lemma}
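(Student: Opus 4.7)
My plan is to compare the two infimum levels by projecting a well-chosen test function onto both Nehari manifolds. The test function will be an extremal of the sharp constant $S_{0,\alpha}$; it produces $c_{0}^{3}$ exactly, and its Nehari projection onto $\mathcal{N}^{2}$ yields an element whose $I_{2}$-energy is strictly smaller, because $I_{2}$ carries extra negative contributions (the weighted Hardy term inside $\|\cdot\|_{\zeta}$ and the Hardy--Sobolev sum) that vanish in $I_{3}$.

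Concretely, I would first identify $c_{0}^{3}$ explicitly. From the Nehari identity on $\mathcal{N}^{3}$ and the lower bound in Lemma \ref{lemma14}, one obtains $c_{0}^{3}\geqslant\frac{N+2s-\alpha}{2(2N-\alpha)}S_{0,\alpha}^{(2N-\alpha)/(N+2s-\alpha)}$. The attainment of $S_{0,\alpha}$ quoted after \eqref{10} (see \cite{Yang2017}) provides an extremal $U\in D^{s,2}(\mathbb{R}^{N})\setminus\{0\}$; after a harmless rescaling it can be assumed to lie in $\mathcal{N}^{3}$ and to realise equality, so in fact $c_{0}^{3}=I_{3}(U)=\frac{N+2s-\alpha}{2(2N-\alpha)}S_{0,\alpha}^{(2N-\alpha)/(N+2s-\alpha)}$. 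Lemma \ref{lemma12}(i) then supplies a unique $t_{*}>0$ with $t_{*}U\in\mathcal{N}^{2}$, and the standard fibering-map argument for superlinear functionals gives $I_{2}(t_{*}U)=\max_{t\geqslant0}I_{2}(tU)$.

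The crux of the proof is the pointwise identity
\[
I_{3}(tU)-I_{2}(tU)=\frac{t^{2}\zeta}{2}\int_{\mathbb{R}^{N}}\frac{|U|^{2}}{|x|^{2s}}\,\mathrm{d}x+\sum_{i=1}^{k}\frac{t^{2^{*}_{s,\theta_{i}}}}{2^{*}_{s,\theta_{i}}}\int_{\mathbb{R}^{N}}\frac{|U|^{2^{*}_{s,\theta_{i}}}}{|x|^{\theta_{i}}}\,\mathrm{d}x,
\]
whose right-hand side is strictly positive for every $t>0$: each Hardy--Sobolev integral is finite by Lemma \ref{lemma7} (since $U\in\mathcal{E}^{s,\alpha,2^{*}_{\alpha}}(\mathbb{R}^{N})$) and strictly positive because $U\not\equiv 0$. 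Hence, letting $\tilde{t}>0$ be the maximiser of $t\mapsto I_{2}(tU)$,
\[
c_{0}^{2}\leqslant I_{2}(t_{*}U)=I_{2}(\tilde{t}U)<I_{3}(\tilde{t}U)\leqslant\max_{t\geqslant0}I_{3}(tU)=I_{3}(U)=c_{0}^{3}.
\]

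The main delicacy is the exact identification $c_{0}^{3}=I_{3}(U)$ at an extremal $U$ of $S_{0,\alpha}$: this is precisely where the attainment of $S_{0,\alpha}$ (rather than just the infimum definition) is essential, since otherwise one only obtains $I_{2}(t_{*}U)<I_{3}(U)$ without being able to identify $I_{3}(U)$ with $c_{0}^{3}$. Once this matching is secured the strict comparison is immediate, and no sharp test-function asymptotics are required.
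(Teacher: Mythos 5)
Your proposal is correct, and it reaches the strict inequality by a genuinely different mechanism than the paper. Both arguments agree on the first half: you identify $c_{0}^{3}=\tfrac{N+2s-\alpha}{2(2N-\alpha)}S_{0,\alpha}^{\frac{2N-\alpha}{N+2s-\alpha}}$ by combining the lower bound of Lemma \ref{lemma14} on $\mathcal{N}^{3}$ with a rescaled extremal of $S_{0,\alpha}$ (the paper does the same thing with the explicit bubble $U_{\sigma,\alpha}$, though its contradiction step gratuitously presupposes that $c_{0}^{3}$ is attained, whereas your direct two-sided bound is cleaner). The difference is in how the strict comparison with $c_{0}^{2}$ is obtained. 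The paper invokes the mountain-pass level estimate of Lemma \ref{lemma15}, namely $c^{2}<c^{2,*}\leqslant\tfrac{N+2s-\alpha}{2(2N-\alpha)}S_{\zeta,\alpha}^{\frac{2N-\alpha}{N+2s-\alpha}}$, together with $S_{0,\alpha}\geqslant S_{\zeta,\alpha}$ (Lemma \ref{lemma22}) and $c^{2}=c_{0}^{2}$ (Lemma \ref{lemma12}). You instead compare the two fibering maps along the ray through the extremal, using the pointwise identity $I_{3}(tU)-I_{2}(tU)=\tfrac{\zeta t^{2}}{2}\int|U|^{2}|x|^{-2s}\,\mathrm{d}x+\sum_{i}\tfrac{t^{2^{*}_{s,\theta_{i}}}}{2^{*}_{s,\theta_{i}}}\int|U|^{2^{*}_{s,\theta_{i}}}|x|^{-\theta_{i}}\,\mathrm{d}x>0$ for $t>0$ (and correctly note that even when $\zeta=0$ the Hardy--Sobolev sum keeps this strictly positive, with finiteness guaranteed by Lemma \ref{lemma7} or by \eqref{11}). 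Your route is more self-contained: it does not need the sharp level estimate of Lemma \ref{lemma15} nor Lemma \ref{lemma22} at this point, only the elementary Nehari projection of Lemma \ref{lemma12}(i) and the fact that the unique critical point of each fibering map is its maximum. What the paper's route buys is economy of means, since Lemma \ref{lemma15} is needed anyway for the non-vanishing argument of Lemma \ref{lemma16}, so reusing it makes the proof of this lemma one line once $c_{0}^{3}$ is identified.
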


\begin{proof}
Consider the family of functions
$\{U_{\sigma}\}$
defined as
\begin{equation*}
\begin{aligned}
U_{\sigma}(x)
=
\sigma^{-\frac{N-2s}{2}}
\left(
\frac{u^{*}(\frac{x}{\sigma})}{\|u^{*}\|_{2^{*}_{s}}}
\right),
\end{aligned}
\end{equation*}
where
$$u^{*}(x)=\frac{\varpi}
{(1+|x|^{2})^{\frac{N -2s}{2}}},
~\varpi\in \mathbb{R}\setminus\{0\}.$$
Then,
for each
$\sigma>0$,
$U_{\sigma}$
(see \cite{Mukherjee2017Fractional})
satisfies
\begin{equation*}
\begin{aligned}
(-\Delta)^{s}
U_{\sigma}
=
|U_{\sigma}|^{2^{*}_{s}-2}U_{\sigma},
\mathrm{~in~}
\mathbb{R}^{N}.
\end{aligned}
\end{equation*}
Set
\begin{equation*}
\begin{aligned}
U_{\sigma,\alpha}
(x)
=
S_{0,0}
^{\frac{(N-\alpha)(2s-N)}{4(N-\alpha+2s)}}
C(N,\alpha)
^{\frac{2s-N}{2(N-\alpha+2s)}}
U_{\sigma}(x).
\end{aligned}
\end{equation*}
Then,
for each
$\sigma>0$,
$U_{\sigma,\alpha}$
satisfies
\begin{equation*}
\begin{aligned}
(-\Delta)^{s}
U_{\sigma,\alpha}
=
\left(
\int_{\mathbb{R}^{N}}
\frac{|U_{\sigma,\alpha}|^{2^{*}_{\alpha}}}{|x-y|^{\alpha}}
\mathrm{d}y
\right)
|U_{\sigma,\alpha}|^{2^{*}_{\alpha}-2}U_{\sigma,\alpha},
\mathrm{~in~}
\mathbb{R}^{N}.
\end{aligned}
\end{equation*}
Hence,
we know that
$U_{\sigma,\alpha}\in \mathcal{N}^{3}$,
and
\begin{equation}\label{30}
\begin{aligned}
\|U_{\sigma,\alpha}\|_{D}^{2}
=
\int_{\mathbb{R}^{N}}
\int_{\mathbb{R}^{N}}
\frac{
|U_{\sigma,\alpha}(y)|^{2^{*}_{\alpha}}
|U_{\sigma,\alpha}(x)|^{2^{*}_{\alpha}}
}
{|x-y|^{\alpha}}
\mathrm{d}x
\mathrm{d}y
=
S_{0,\alpha}^{\frac{2N-\alpha}{N-\alpha+2s}}.
\end{aligned}
\end{equation}
Now,
we show that
$$c^{3}_{0}=\frac{N+2s-\alpha}{2(2N-\alpha)}S_{0,\alpha}^{\frac{2N-\alpha}{N+2s-\alpha}}.$$
Suppose on the contrary that
$c^{3}_{0}<\frac{N+2s-\alpha}{2(2N-\alpha)}S_{0,\alpha}^{\frac{2N-\alpha}{N+2s-\alpha}}$.
Then there exists
$\ddot{u}$
satisfies
$I_{3}(\ddot{u})=c_{0}^{3}$
and
$\ddot{u}\in \mathcal{N}^{3}$.
We get
\begin{equation}\label{31}
\begin{aligned}
c_{0}^{3}
=
I_{3}(\ddot{u})
-
\frac{1}{2\cdot 2^{*}_{\alpha}}
\langle I_{3}^{'}(\ddot{u}),\ddot{u}\rangle
=
\left(
\frac{1}{2^{*}}
-
\frac{1}{2\cdot 2^{*}_{\alpha}}
\right)
\|\ddot{u}\|^{2}_{D}.
\end{aligned}
\end{equation}
Combining
\eqref{30}
and
\eqref{31},
we know
\begin{equation*}
\begin{aligned}
\left(
\frac{1}{2^{*}}
-
\frac{1}{2\cdot 2^{*}_{\alpha}}
\right)
\|U_{\sigma,\alpha}\|^{2}_{D}
=
\frac{N+2s-\alpha}{2(2N-\alpha)}S_{0,\alpha}^{\frac{2N-\alpha}{N+2s-\alpha}}
>
c_{0}^{3}
=
\left(
\frac{1}{2^{*}}
-
\frac{1}{2\cdot 2^{*}_{\alpha}}
\right)
\|\ddot{u}\|^{2}_{D},
\end{aligned}
\end{equation*}
which implies that
\begin{equation*}
\begin{aligned}
S_{0,\alpha}^{\frac{2N-\alpha}{N-\alpha+2s}}
=
\|U_{\sigma,\alpha}\|^{2}_{D}
>
\|\ddot{u}\|^{2}_{D}.
\end{aligned}
\end{equation*}
This contradicts with
$
\|\ddot{u}\|^{2}_{D}
\geqslant
S_{0,\alpha}^{\frac{2N-\alpha}{N-\alpha+2s}}
$(see Lemma \ref{lemma14}).
Hence,
we know that
$$c^{3}_{0}=\frac{N+2s-\alpha}{2(2N-\alpha)}S_{0,\alpha}^{\frac{2N-\alpha}{N+2s-\alpha}}\geqslant\frac{N+2s-\alpha}{2(2N-\alpha)}S_{\zeta,\alpha}^{\frac{2N-\alpha}{N+2s-\alpha}}>c^{2}=c^{2}_{0}.$$
\end{proof}

\noindent
{\bf The proof of Theorem \ref{theorem2}:}
We divide our proof into five steps.

\noindent
{\bf Step 1.}
Since
$\{u_{n}\}$
is a bounded sequence in
$D^{s,2}(\mathbb{R}^{N})$,
up to a subsequence,
we can assume that
\begin{align*}
&
u_{n}\rightharpoonup u,
~
\mathrm{in}
~
D^{s,2}(\mathbb{R}^{N}),
~~
u_{n}\rightarrow u,
~
\mathrm{a.e. ~in}
~
\mathbb{R}^{N},\\
&u_{n}\rightarrow u,
~
\mathrm{in}
~
L^{r}_{loc}(\mathbb{R}^{N})
~
\mathrm{for~all}
~
r\in[1,2^{*}_{s}).
\end{align*}
According to
Lemma
\ref{lemma5},
Lemma
\ref{lemma8}
and
Lemma
\ref{lemma16},
there exists
$C>0$
such that
$$
\|u_{n}\|_{\mathcal{L}^{2,N-2s}(\mathbb{R}^{N})}\geqslant C>0.
$$
On the other hand,
since the sequence is bounded in
$D^{s,2}(\mathbb{R}^{N})$
and
$D^{s,2}(\mathbb{R}^{N})\hookrightarrow L^{2^{*}_{s}}(\mathbb{R}^{N})\hookrightarrow \mathcal{L}^{2,N-2s}(\mathbb{R}^{N})$,
we have
$$
\|u_{n}\|_{\mathcal{L}^{2,N-2s}(\mathbb{R}^{N})}\leqslant C,
$$
for some
$C>0$
independent of $n$.
Hence, there exists a positive constant which we denote again by $C$ such
that for any $n$ we obtain
$$
C
\leqslant
\|u_{n}\|_{\mathcal{L}^{2,N-2s}(\mathbb{R}^{N})}\leqslant C^{-1}.
$$
So we may find
$\sigma_{n} > 0$
and
$x_{n}\in \mathbb{R}^{N}$
such that
$$
\frac{1}{\sigma_{n}^{2s}}
\int_{B(x_{n},\sigma_{n})}
|u_{n}(y)|^{2}
\mathrm{d}y
\geqslant
\|u_{n}\|_{\mathcal{L}^{2,N-2s}(\mathbb{R}^{N})}^{2}
-
\frac{C}{2n}
\geqslant
C_{6}>0.
$$
Let
$\bar{u}_{n}(x)=\sigma_{n}^{\frac{N-2s}{2}}u_{n}(x_{n}+\sigma_{n}x)$.
We may readily verify that
$$\widetilde{I_{2}}(\bar{u}_{n})=I_{2}(u_{n})\rightarrow c^{2}
~\mathrm{and}~
\widetilde{I_{2}}^{'}(\bar{u}_{n})
\rightarrow0
~\mathrm{as}~n\rightarrow\infty,$$
where
\begin{equation*}
\begin{aligned}
\widetilde{I_{2}}(\bar{u}_{n})
=&
\frac{1}{2}
\|\bar{u}_{n}\|_{D}^{2}
-
\frac{1}{2}
\int_{\mathbb{R}^{N}}
\frac{|\bar{u}_{n}|^{2}}
{|x+\frac{x_{n}}{\sigma_{n}}|^{2s}}
\mathrm{d}x\\
&-
\frac{1}{2\cdot2^{*}_{\alpha}}
\int_{\mathbb{R}^{N}}
\int_{\mathbb{R}^{N}}
\frac{|\bar{u}_{n}(x)|^{2^{*}_{\alpha}}|\bar{u}_{n}(y)|^{2^{*}_{\alpha}}}{|x-y|^{\alpha}}
\mathrm{d}x
\mathrm{d}y
-
\sum_{i=1}^{k}
\frac{1}{2^{*}_{s,\theta_{i}}}
\int_{\mathbb{R}^{N}}
\frac{|\bar{u}_{n}|^{2^{*}_{s,\theta_{i}}}}
{|x+\frac{x_{n}}{\sigma_{n}}|^{\theta_{i}}}
\mathrm{d}x.
\end{aligned}
\end{equation*}
Now,
for all
$\varphi\in D^{s,2}(\mathbb{R}^{N})$,
we obtain
\begin{equation*}
\begin{aligned}
|\langle \widetilde{I_{2}}^{'}(\bar{u}_{n}),\varphi\rangle|
=&
|\langle I_{2}^{'}(u_{n}),\bar{\varphi}\rangle|\\
\leqslant&
\|I_{2}^{'}(u_{n})\|_{D^{-1}}
\|\bar{\varphi}\|_{D}\\
=&
o(1)
\|\bar{\varphi}\|_{D},
\end{aligned}
\end{equation*}
where
$\bar{\varphi}=\sigma_{n}^{-\frac{N-2s}{2}}\varphi(\frac{x-x_{n}}{\sigma_{n}})$.
Since
$\|\bar{\varphi}\|_{D}=\|\varphi\|_{D}$,
we get
$$\widetilde{I_{2}}^{'}(\bar{u}_{n})\rightarrow0~
\mathrm{as}~
n\rightarrow\infty.$$
Thus there exists
$\bar{u}$
such that
\begin{align*}
&
\bar{u}_{n}\rightharpoonup \bar{u},
~
\mathrm{in}
~
D^{s,2}(\mathbb{R}^{N}),
~~
\bar{u}_{n}\rightarrow \bar{u},
~
\mathrm{a.e. ~in}
~
\mathbb{R}^{N},\\
&\bar{u}_{n}\rightarrow \bar{u},
~
\mathrm{in}
~
L^{r}_{loc}(\mathbb{R}^{N})
~
\mathrm{for~all}
~
r\in[1,2^{*}_{s}).
\end{align*}
Then
$$
\int_{B(0,1)}
|\bar{u}_{n}(y)|^{2}
\mathrm{d}y
=
\frac{1}{\sigma_{n}^{2s}}
\int_{B(x_{n},\sigma_{n})}
|u_{n}(y)|^{2}
\mathrm{d}y
\geqslant
C_{6}>0.$$
As a result,
$\bar{u}\not\equiv0$.

\noindent
{\bf Step 2.}
Now,
we claim that
$\{\frac{x_{n}}{\sigma_{n}}\}$
is bounded.
If
$\frac{x_{n}}{\sigma_{n}}\rightarrow\infty$,
then for any
$\varphi\in D^{s,2}(\mathbb{R}^{N})$,
we get
\begin{equation}\label{32}
\begin{aligned}
\lim_{n\rightarrow\infty}
\int_{\mathbb{R}^{N}}
\frac{\bar{u}_{n}\varphi}
{|x+\frac{x_{n}}{\sigma_{n}}|^{2s}}
\mathrm{d}x
=
0
~\mathrm{and}~
\lim_{n\rightarrow\infty}
\int_{\mathbb{R}^{N}}
\frac{|\bar{u}_{n}|^{2^{*}_{s,\theta_{i}}-2}\bar{u}_{n}\varphi}
{|x+\frac{x_{n}}{\sigma_{n}}|^{\theta_{i}}}
\mathrm{d}x
=0.
\end{aligned}
\end{equation}
We  will show that
\begin{equation*}
\begin{aligned}
\langle I^{'}_{3}(\bar{u}),\varphi\rangle=0.
\end{aligned}
\end{equation*}
Since
$\bar{u}_{n}\rightharpoonup \bar{u}$
weakly in
$D^{s,2}(\mathbb{R}^{N})$,
we know
\begin{equation}\label{33}
\begin{aligned}
&\lim_{n\rightarrow\infty}
\int_{\mathbb{R}^{N}}
\int_{\mathbb{R}^{N}}
\frac{(\bar{u}_{n}(x)-\bar{u}_{n}(y))(\varphi(x)-\varphi(y))}
{|x-y|^{N+2s}}
\mathrm{d}x
\mathrm{d}y\\
=&
\int_{\mathbb{R}^{N}}
\int_{\mathbb{R}^{N}}
\frac{(\bar{u}(x)-\bar{u}(y))(\varphi(x)-\varphi(y))}
{|x-y|^{N+2s}}
\mathrm{d}x
\mathrm{d}y.
\end{aligned}
\end{equation}
By the Hardy--Littlewood--Sobolev inequality,
the Riesz potential defines a linear continuous map from
$L^{\frac{2N}{2N-\alpha}}(\mathbb{R}^{N})$
to
$L^{\frac{2N}{\alpha}}(\mathbb{R}^{N})$.
Since
$|\bar{u}_{n}|^{2^{*}_{\alpha}}\rightharpoonup|\bar{u}|^{2^{*}_{\alpha}}$
weakly in
$L^{\frac{2^{*}}{2^{*}_{\alpha}}}(\mathbb{R}^{N})$,
it follows that
as
$n\rightarrow\infty$,
\begin{equation}\label{34}
\int_{\mathbb{R}^{N}}
\frac{|\bar{u}_{n}(y)|^{2^{*}_{\alpha}}}{|x-y|^{\alpha}}
\mathrm{d}y
\rightharpoonup
\int_{\mathbb{R}^{N}}
\frac{|\bar{u}(y)|^{2^{*}_{\alpha}}}{|x-y|^{\alpha}}
\mathrm{d}y
~\mathrm{weakly~in}~L^{\frac{2N}{\alpha}}(\mathbb{R}^{N}).
\end{equation}
Now,
we show that
$|\bar{u}_{n}|^{2^{*}_{\alpha}-2}
\bar{u}_{n}
\varphi
\rightarrow
|\bar{u}|^{2^{*}_{\alpha}-2}
\bar{u}\varphi$
in
$L^{\frac{2N}{2N-\alpha}}(\mathbb{R}^{N})$.
For any $\varepsilon>0$,
there exists $R>0$ large enough such that
\begin{equation}\label{35}
\begin{aligned}
&
\lim_{n\rightarrow\infty}
\int_{|x| >R}
\left|
|\bar{u}_{n}|^{2^{*}_{\alpha}-2}
\bar{u}_{n}
\varphi
\right|^{\frac{2N}{2N-\alpha}}
-
\left|
|\bar{u}|^{2^{*}_{\alpha}-2}
\bar{u}\varphi
\right|^{\frac{2N}{2N-\alpha}}
\mathrm{d}x\\
\leqslant&
\lim_{n\rightarrow\infty}
\int_{|x| >R}
\left|
\bar{u}_{n}
\right|^{(2^{*}_{\alpha}-1)\cdot\frac{2^{*}_{s}}{2^{*}_{\alpha}}}
\left|
\varphi
\right|^{\frac{2^{*}_{s}}{2^{*}_{\alpha}}}
\mathrm{d}x
+
\int_{|x| >R}
\left|
\bar{u}
\right|^{(2^{*}_{\alpha}-1)\cdot\frac{2^{*}_{s}}{2^{*}_{\alpha}}}
\left|
\varphi
\right|^{\frac{2^{*}_{s}}{2^{*}_{\alpha}}}
\mathrm{d}x\\
\leqslant&
\lim_{n\rightarrow\infty}
\left(
\int_{|x| >R}
\left|
\bar{u}_{n}
\right|^{2^{*}_{s}}
\mathrm{d}x
\right)^{1-\frac{1}{2^{*}_{\alpha}}}
\left(
\int_{|x| >R}
\left|
\varphi
\right|^{2^{*}_{s}}
\mathrm{d}x
\right)^{\frac{1}{2^{*}_{\alpha}}}\\
&+
\left(
\int_{|x| >R}
\left|
\bar{u}
\right|^{2^{*}_{s}}
\mathrm{d}x
\right)^{1-\frac{1}{2^{*}_{\alpha}}}
\left(
\int_{|x| >R}
\left|
\varphi
\right|^{2^{*}_{s}}
\mathrm{d}x
\right)^{\frac{1}{2^{*}_{\alpha}}}\\
\leqslant&
C
\left(
\int_{|x| >R}
\left|
\varphi
\right|^{2^{*}_{s}}
\mathrm{d}x
\right)^{\frac{1}{2^{*}_{\alpha}}}\\
<&
\frac{\varepsilon}{2}.
\end{aligned}
\end{equation}
On the other hand,
by the boundedness of
$\{\bar{u}_{n}\}$,
one has
\begin{equation*}
\begin{aligned}
\left(
\int_{|x|\leqslant R}
\left|
\bar{u}_{n}
\right|^{2^{*}_{s}}
\mathrm{d}x
\right)^{1-\frac{1}{2^{*}_{\alpha}}}
\leqslant&
M.
\end{aligned}
\end{equation*}
where $M>0$ is a constant.
Let
$\Omega=\{x\in\mathbb{R}^{N}||x|\leqslant R\}$.
For any $\tilde{\varepsilon}>0$,
there exists
$\delta>0$,
when
$E\subset\Omega$
with
$|E|<\delta$.
We obtain
\begin{align*}
\int_{E}
\left|
|\bar{u}_{n}|^{2^{*}_{\alpha}-2}
\bar{u}_{n}
\varphi
\right|^{\frac{2N}{2N-\alpha}}
\mathrm{d}x
=&
\int_{E}
\left|
\bar{u}_{n}
\right|^{(2^{*}_{\alpha}-1)\cdot\frac{2^{*}_{s}}{2^{*}_{\alpha}}}
\left|
\varphi
\right|^{\frac{2^{*}_{s}}{2^{*}_{\alpha}}}
\mathrm{d}x\\
\leqslant&
\left(
\int_{E}
\left|
\bar{u}_{n}
\right|^{2^{*}_{s}}
\mathrm{d}x
\right)^{1-\frac{1}{2^{*}_{\alpha}}}
\left(
\int_{E}
\left|
\varphi
\right|^{2^{*}_{s}}
\mathrm{d}x
\right)^{\frac{1}{2^{*}_{\alpha}}}\\
<&M\tilde{\varepsilon},
\end{align*}
where the last inequality is from the absolutely continuity of
$\int_{E}
\left|
\varphi
\right|^{2^{*}_{s}}
\mathrm{d}x$.
Moreover,
$|\bar{u}_{n}|^{2^{*}_{\alpha}-2}
\bar{u}_{n}
\varphi
\rightarrow
|\bar{u}|^{2^{*}_{\alpha}-2}
\bar{u}
\varphi$
a.e. in $\mathbb{R}^{N}$ as $n\rightarrow\infty$.
Thus,
by the Vitali convergence Theorem,
we get
\begin{equation}\label{36}
\begin{aligned}
\lim_{n\rightarrow\infty}
\int_{|x|\leqslant R}
\left|
|\bar{u}_{n}|^{2^{*}_{\alpha}-2}
\bar{u}_{n}
\varphi
\right|^{\frac{2N}{2N-\alpha}}
\mathrm{d}x
=
\int_{|x|\leqslant R}
\left|
|\bar{u}|^{2^{*}_{\alpha}-2}
\bar{u}
\varphi
\right|^{\frac{2N}{2N-\alpha}}
\mathrm{d}x.
\end{aligned}
\end{equation}
It follows from
\eqref{35}
and
\eqref{36}
that
\begin{equation*}
\begin{aligned}
&
\lim_{n\rightarrow\infty}
\left|
\int_{\mathbb{R}^{N}}
\left|
|\bar{u}_{n}|^{2^{*}_{\alpha}-2}
\bar{u}_{n}
\varphi
\right|^{\frac{2N}{2N-\alpha}}
-
\left|
|\bar{u}|^{2^{*}_{\alpha}-2}
\bar{u}
\varphi
\right|^{\frac{2N}{2N-\alpha}}
\mathrm{d}x
\right|\\
\leqslant&
\lim_{n\rightarrow\infty}
\left|
\int_{|x|\leqslant R}
\left|
|\bar{u}_{n}|^{2^{*}_{\alpha}-2}
\bar{u}_{n}
\varphi
\right|^{\frac{2N}{2N-\alpha}}
-
\left|
|\bar{u}|^{2^{*}_{\alpha}-2}
\bar{u}
\varphi
\right|^{\frac{2N}{2N-\alpha}}
\mathrm{d}x
\right|\\
&+
\lim_{n\rightarrow\infty}
\left|
\int_{|x|> R}
\left|
|\bar{u}_{n}|^{2^{*}_{\alpha}-2}
\bar{u}_{n}
\varphi
\right|^{\frac{2N}{2N-\alpha}}
-
\left|
|\bar{u}|^{2^{*}_{\alpha}-2}
\bar{u}
\varphi
\right|^{\frac{2N}{2N-\alpha}}
\mathrm{d}x
\right|\\
<&
\varepsilon.
\end{aligned}
\end{equation*}
This implies that
\begin{equation}\label{37}
\begin{aligned}
\lim_{n\rightarrow\infty}
\int_{\mathbb{R}^{N}}
\left|
|\bar{u}_{n}|^{2^{*}_{\alpha}-2}
\bar{u}_{n}
\varphi
\right|^{\frac{2N}{2N-\alpha}}
\mathrm{d}x
=
\int_{\mathbb{R}^{N}}
\left|
|\bar{u}|^{2^{*}_{\alpha}-2}
\bar{u}
\varphi
\right|^{\frac{2N}{2N-\alpha}}
\mathrm{d}x
\end{aligned}
\end{equation}
Combining
\eqref{34}
and
\eqref{37},
we have
\begin{equation}\label{38}
\begin{aligned}
&\lim_{n\rightarrow\infty}
\int_{\mathbb{R}^{N}}
\int_{\mathbb{R}^{N}}
\frac{
|\bar{u}_{n}(y)|^{2^{*}_{\alpha}}
|\bar{u}_{n}(x)|^{2^{*}_{\alpha}-2}
\bar{u}_{n}(x)
\varphi(x)
}{|x-y|^{\alpha}}
\mathrm{d}y
\mathrm{d}x\\
=&
\int_{\mathbb{R}^{N}}
\int_{\mathbb{R}^{N}}
\frac{
|\bar{u}(y)|^{2^{*}_{\alpha}}
|\bar{u}(x)|^{2^{*}_{\alpha}-2}
\bar{u}(x)
\varphi(x)
}{|x-y|^{\alpha}}
\mathrm{d}y
\mathrm{d}x.
\end{aligned}
\end{equation}
Applying
$\lim\limits_{n\rightarrow\infty}\langle \widetilde{I_{2}}^{'}(\bar{u}_{n}),\varphi\rangle\rightarrow0$,
\eqref{32},
\eqref{33}
and
\eqref{38}
we know
\begin{equation}\label{39}
\begin{aligned}
\langle I^{'}_{3}(\bar{u}),\varphi\rangle=0.
\end{aligned}
\end{equation}
Moreover,
according to \eqref{39}
and
$\bar{u}\not\equiv0$,
we get that
$$\bar{u}\in \mathcal{N}^{3}.$$
By Br\'{e}zis--Lieb lemma \cite[Lemma 2.2]{Gao2016},
we have
\begin{equation*}
\begin{aligned}
&\int_{\mathbb{R}^{N}}
\int_{\mathbb{R}^{N}}
\frac{|\bar{u}_{n}(x)|^{2^{*}_{\alpha}}|\bar{u}_{n}(y)|^{2^{*}_{\alpha}}}{|x-y|^{\alpha}}
\mathrm{d}x
\mathrm{d}y
-
\int_{\mathbb{R}^{N}}
\int_{\mathbb{R}^{N}}
\frac{|\bar{u}_{n}(x)-\bar{u}(x)|^{2^{*}_{\alpha}}|\bar{u}_{n}(y)-\bar{u}(y)|^{2^{*}_{\alpha}}}{|x-y|^{\alpha}}
\mathrm{d}x
\mathrm{d}y\\
=&
\int_{\mathbb{R}^{N}}
\int_{\mathbb{R}^{N}}
\frac{|\bar{u}(x)|^{2^{*}_{\alpha}}|\bar{u}(y)|^{2^{*}_{\alpha}}}{|x-y|^{\alpha}}
\mathrm{d}x
\mathrm{d}y
+
o(1),
\end{aligned}
\end{equation*}
which implies that
\begin{equation}\label{40}
\begin{aligned}
\int_{\mathbb{R}^{N}}
\int_{\mathbb{R}^{N}}
\frac{|\bar{u}_{n}(x)|^{2^{*}_{\alpha}}|\bar{u}_{n}(y)|^{2^{*}_{\alpha}}}{|x-y|^{\alpha}}
\mathrm{d}x
\mathrm{d}y
\geqslant
\int_{\mathbb{R}^{N}}
\int_{\mathbb{R}^{N}}
\frac{|\bar{u}(x)|^{2^{*}_{\alpha}}|\bar{u}(y)|^{2^{*}_{\alpha}}}{|x-y|^{\alpha}}
\mathrm{d}x
\mathrm{d}y
+
o(1).
\end{aligned}
\end{equation}
Similarly,
we get
\begin{equation}\label{41}
\begin{aligned}
\int_{\mathbb{R}^{N}}
\frac{|\bar{u}_{n}|^{2^{*}_{s,\theta_{i}}}}
{|x|^{\theta_{i}}}
\mathrm{d}x
\geqslant
\int_{\mathbb{R}^{N}}
\frac{|\bar{u}|^{2^{*}_{s,\theta_{i}}}}
{|x|^{\theta_{i}}}
\mathrm{d}x
+
o(1).
\end{aligned}
\end{equation}
Applying
Lemma \ref{lemma17},
Lemma \ref{lemma12},
(\ref{40}),
(\ref{41}),
$\bar{u}\in \mathcal{N}^{3}$
and
Lemma \ref{lemma14},
we obtain
\begin{align*}
c_{0}^{3}
>
c_{0}^{2}
=c^{2}
=&
I_{2}(\bar{u}_{n})
-
\frac{1}{2}
\langle I^{'}_{2}(\bar{u}_{n}),\bar{u}_{n}\rangle\\
\geqslant&
\left(
\frac{1}{2}
-
\frac{1}{2\cdot2^{*}_{\alpha}}
\right)
\int_{\mathbb{R}^{N}}
\int_{\mathbb{R}^{N}}
\frac{|\bar{u}(x)|^{2^{*}_{\alpha}}|\bar{u}(y)|^{2^{*}_{\alpha}}}{|x-y|^{\alpha}}
\mathrm{d}x
\mathrm{d}y
+o(1)\\
=&
I_{3}(\bar{u})
-
\frac{1}{2}
\langle I_{3}^{'}(\bar{u}),\bar{u}\rangle
=
I_{3}(\bar{u})
\geqslant
c_{0}^{3},
\end{align*}
which yields a contradiction.
Hence, $\{\frac{x_{n}}{\sigma_{n}}\}$ is bounded.

\noindent
{\bf Step 3.}
In this step,
we study another
$(PS)_{c^{2}}$
sequence of
$I_{2}$.
Let
$\tilde{u}_{n}(x)=\sigma_{n}^{\frac{N-2s}{2}}u_{n}(\sigma_{n}x)$.
Then we can
verify that
$$I_{2}(\tilde{u}_{n})
=
I_{2}(u_{n})\rightarrow c^{2}
,~
I^{'}_{2}(\tilde{u}_{n})\rightarrow 0
~\mathrm{as}~n\rightarrow\infty.$$
Arguing as before, we have
\begin{align*}
&\tilde{u}_{n}\rightharpoonup \tilde{u}
~
\mathrm{in}
~
D^{1,2}(\mathbb{R}^{N}),~
\tilde{u}_{n}\rightarrow \tilde{u}
~
\mathrm{a.e. ~in}
~
\mathbb{R}^{N},\\
&\tilde{u}_{n}\rightarrow \tilde{u}
~
\mathrm{in}
~
L^{r}_{loc}(\mathbb{R}^{N})~~\mathrm{for~all~}r\in[1,2^{*}_{s}).
\end{align*}
By using  $\{\frac{x_{n}}{\sigma_{n}}\}$ is bounded,
there exists
$\tilde{R}>0$ such that
$$
\int_{B(0,\tilde{R})}
|\tilde{u}_{n}(y)|^{2}
\mathrm{d}y
>
\int_{B(\frac{x_{n}}{\sigma_{n}},1)}
|\tilde{u}_{n}(y)|^{2}
\mathrm{d}y
=
\frac{1}{\sigma_{n}^{2s}}
\int_{B(x_{n},\sigma_{n})}
|u_{n}(y)|^{2}
\mathrm{d}y
\geqslant
C_{6}>0.$$
As a result,
$\tilde{u}\not\equiv0$.

\noindent
{\bf Step 4.}
In this step,
we show
$\tilde{u}_{n}\rightarrow \tilde{u}$
strongly in
$D^{s,2}(\mathbb{R}^{N})$.
Set
\begin{align*}
K(u)
=
\left(
\frac{1}{2}
-
\frac{1}{2\cdot2^{*}_{\alpha}}
\right)
\int_{\mathbb{R}^{N}}
\int_{\mathbb{R}^{N}}
\frac{|u(x)|^{2^{*}_{\alpha}}|u(y)|^{2^{*}_{\alpha}}}{|x-y|^{\alpha}}
\mathrm{d}x
\mathrm{d}y
+
\sum_{i=1}^{k}
\left(
\frac{1}{2}
-
\frac{1}{2^{*}_{s,\theta_{i}}}
\right)
\int_{\mathbb{R}^{N}}
\frac{|u|^{2^{*}_{s,\theta_{i}}}}
{|x|^{\theta_{i}}}
\mathrm{d}x.
\end{align*}
Similar to Step 2,
we know that
\begin{equation}\label{42}
\begin{aligned}
\langle I^{'}_{2}(\tilde{u}),\varphi\rangle=0.
\end{aligned}
\end{equation}
Applying
Lemma \ref{lemma11},
Lemma \ref{lemma12},
$\tilde{u}\in \mathcal{N}^{2}$
and
(\ref{40})
--
(\ref{42}),
we obtain
\begin{equation}\label{43}
\begin{aligned}
c_{0}^{2}
=c^{2}
=&
I_{2}(\tilde{u}_{n})
-
\frac{1}{2}
\langle I^{'}_{2}(\tilde{u}_{n}),\tilde{u}_{n}\rangle\\
=&
\lim_{n\rightarrow\infty}K(\tilde{u}_{n})+o(1)\\
\geqslant&
K(\tilde{u})
+o(1)\\
=&
I_{2}(\tilde{u})
-
\frac{1}{2}
\langle I_{2}^{'}(\tilde{u}),\tilde{u}\rangle
=
I_{2}(\tilde{u})
\geqslant
c_{0}^{2}.
\end{aligned}
\end{equation}
\textcolor{red}{Therefore, the inequalities above have to be equalities}.
We know
\begin{align*}
\lim\limits_{n\rightarrow\infty}
K(\tilde{u}_{n})
=
K(\tilde{u}).
\end{align*}
By using Br\'{e}zis--Lieb lemma again,
we have
\begin{align*}
\lim\limits_{n\rightarrow\infty}
K(\tilde{u}_{n})
-
\lim\limits_{n\rightarrow\infty}
K(\tilde{u}_{n}-\tilde{u})
=
K(\tilde{u})+o(1).
\end{align*}
Hence,
we deduce that
\begin{align*}
\lim\limits_{n\rightarrow\infty}
K(\tilde{u}_{n}-\tilde{u})
=
0,
\end{align*}
which implies that
\begin{equation}\label{44}
\begin{aligned}
&
\lim\limits_{n\rightarrow\infty}
\int_{\mathbb{R}^{N}}
\int_{\mathbb{R}^{N}}
\frac{|\tilde{u}_{n}(x)-\tilde{u}(x)|^{2^{*}_{\alpha}}|\tilde{u}_{n}(y)-\tilde{u}(y)|^{2^{*}_{\alpha}}}{|x-y|^{\alpha}}
\mathrm{d}x
\mathrm{d}y
=0,
\\
&
\lim\limits_{n\rightarrow\infty}
\int_{\mathbb{R}^{N}}
\frac{
|\tilde{u}_{n}-\tilde{u}|^{2^{*}_{s,\theta_{i}}}
}
{|x|^{\theta_{i}}}
\mathrm{d}x
=0,~\mathrm{for~all}~
i=1,\ldots,k.
\end{aligned}
\end{equation}
According to
$\langle I^{'}_{2}(\tilde{u}_{n}),\tilde{u}_{n}\rangle=o(1)$,
$\langle I^{'}_{2}(\tilde{u}),\tilde{u}\rangle=0$
and Br\'{e}zis--Lieb lemma,
we obtain
\begin{equation*}
\begin{aligned}
o(1)=&
\langle I^{'}_{2}(\tilde{u}_{n}),\tilde{u}_{n}\rangle
-
\langle I^{'}_{2}(\tilde{u}),\tilde{u}\rangle\\
=&
\|\tilde{u}_{n}-\tilde{u}\|_{\zeta}^{2}
-
\int_{\mathbb{R}^{N}}
\int_{\mathbb{R}^{N}}
\frac{|\tilde{u}_{n}(x)-\tilde{u}(x)|^{2^{*}_{\alpha_{i}}}|\tilde{u}_{n}(y)-\tilde{u}(y)|^{2^{*}_{\alpha_{i}}}}{|x-y|^{\alpha_{i}}}
\mathrm{d}x
\mathrm{d}y\\
&-\sum_{i=1}^{k}
\int_{\mathbb{R}^{N}}
\frac{
|\tilde{u}_{n}-\tilde{u}|^{2^{*}_{s,\theta_{i}}}
}
{|x|^{\theta_{i}}}
\mathrm{d}x
+o(1),
\end{aligned}
\end{equation*}
which implies that
\begin{equation}\label{45}
\begin{aligned}
&
\lim\limits_{n\rightarrow\infty}
\|\tilde{u}_{n}-\tilde{u}\|_{\zeta}^{2}\\
=&
\lim\limits_{n\rightarrow\infty}
\int_{\mathbb{R}^{N}}
\int_{\mathbb{R}^{N}}
\frac{|\tilde{u}_{n}(x)-\tilde{u}(x)|^{2^{*}_{\alpha}}|\tilde{u}_{n}(y)-\tilde{u}(y)|^{2^{*}_{\alpha}}}{|x-y|^{\alpha}}
\mathrm{d}x
\mathrm{d}y\\
&+
\lim\limits_{n\rightarrow\infty}
\int_{\mathbb{R}^{N}}
\frac{
|\tilde{u}_{n}-\tilde{u}|^{2^{*}_{s,\theta_{i}}}
}
{|x|^{\theta_{i}}}
\mathrm{d}x+o(1).
\end{aligned}
\end{equation}
Combining
(\ref{44})
and
(\ref{45}),
we get
\begin{equation*}
\begin{aligned}
\lim\limits_{n\rightarrow\infty}
\|\tilde{u}_{n}-\tilde{u}\|_{\zeta}^{2}
=
0.
\end{aligned}
\end{equation*}
Since
$\tilde{u}\not\equiv0$,
we know that
$\tilde{u}_{n}\rightarrow \tilde{u}$
strongly in
$D^{s,2}(\mathbb{R}^{N})$.

\noindent
{\bf Step 5.}
By using \eqref{43} again,
we know that
$I_{2}(\tilde{u})=c^{2}$,
which means that $\tilde{u}$ is a nontrivial solution of problem $(\mathcal{P}_{2})$ at the
energy level $c^{2}$.
Then we have just to prove
that we can choose
$\tilde{u}\geqslant0$.
We know that
\begin{equation*}
\begin{aligned}
0=&
\langle I^{'}_{2}(\tilde{u}),\tilde{u}^{-}\rangle\\
=&
\int_{\mathbb{R}^{N}}
\int_{\mathbb{R}^{N}}
\frac{(\tilde{u}(x)-\tilde{u}(y))(\tilde{u}^{-}(x)-\tilde{u}^{-}(y))}{|x-y|^{N+2s}}
\mathrm{d}x
\mathrm{d}y
-
\zeta
\int_{\mathbb{R}^{N}}
\frac{\tilde{u}\tilde{u}^{-}}{|x|^{2s}}
\mathrm{d}x
\\
&-
\int_{\mathbb{R}^{N}}
\int_{\mathbb{R}^{N}}
\frac{|\tilde{u}(y)|^{2^{*}_{\alpha}}|\tilde{u}(x)|^{2^{*}_{\alpha}-2}\tilde{u}(x)\tilde{u}^{-}(x)}{|x-y|^{\alpha}}
\mathrm{d}x
\mathrm{d}y
-
\sum_{i=1}^{k}
\int_{\mathbb{R}^{N}}
\frac{|\tilde{u}|^{2^{*}_{s,\theta_{i}}-2}\tilde{u}\tilde{u}^{-}}
{|x|^{\theta_{i}}}
\mathrm{d}x,
\end{aligned}
\end{equation*}
where
$\tilde{u}^{-}=\max\{0,-\tilde{u}\}$.
For a.e.
$x,y\in \mathbb{R}^{N}$,
we have
$$(\tilde{u}(x)-\tilde{u}(y))(\tilde{u}^{-}(x)-\tilde{u}^{-}(y))\leqslant -|\tilde{u}^{-}(x)-\tilde{u}^{-}(y)|^{2}.$$
Then,
we get
\begin{equation*}
\begin{aligned}
0=&
-
\|\tilde{u}^{-}\|_{D}^{2}
-
\zeta
\int_{\mathbb{R}^{N}}
\frac{|\tilde{u}^{-}|^{2}}{|x|^{2s}}
\mathrm{d}x
-
\int_{\mathbb{R}^{N}}
\int_{\mathbb{R}^{N}}
\frac{|\tilde{u}(y)|^{2^{*}_{\alpha}}|\tilde{u}^{-}(x)|^{2^{*}_{\alpha}}}{|x-y|^{\alpha}}
\mathrm{d}x
\mathrm{d}y
-
\sum_{i=1}^{k}
\int_{\mathbb{R}^{N}}
\frac{|\tilde{u}^{-}|^{2^{*}_{s,\theta_{i}}}}
{|x|^{\theta_{i}}}
\mathrm{d}x\\
\leqslant&
-\|\tilde{u}^{-}\|_{D}^{2}.
\end{aligned}
\end{equation*}
Thus,
$\|\tilde{u}^{-}\|_{D}^{2}=0$.
Hence,
we can choose
$\tilde{u}\geqslant0$.
By using the fractional Kelvin transformation
\begin{equation}\label{46}
\begin{aligned}
\tilde{\tilde{u}}(x)
=
\frac{1}{|x|^{N-2s}}
\tilde{u}
\left(
\frac{x}{|x|^{2}}
\right).
\end{aligned}
\end{equation}
It is well known that
\begin{equation}\label{47}
\begin{aligned}
(-
\Delta)^{s}
\tilde{\tilde{u}}(x)
=
\frac{1}{|x|^{N+2s}}
(-
\Delta)^{s}
\tilde{u}
\left(
\frac{x}{|x|^{2}}
\right).
\end{aligned}
\end{equation}
The following identity is very useful.
For
$\forall x,y\in \mathbb{R}^{N}\backslash\{0\}$,
we get
\begin{equation}\label{48}
\begin{aligned}
\frac{1}
{
\left|
\frac{x}{|x|^{2}}-\frac{y}{|y|^{2}}
\right|^{\alpha}}
\cdot
\frac{1}{|xy|^{\alpha}}
=&
\frac{1}
{
\left|
\frac{x\cdot y^{2}-y\cdot x^{2}}{(xy)^{2}}
\right|^{\alpha}}
\cdot
\frac{1}{|xy|^{\alpha}}\\
=&
\frac{1}
{
\left|
\frac{x\cdot y^{2}-y\cdot x^{2}}{xy}
\right|^{\alpha}}\\
=&
\frac{1}
{
\left|
y-x
\right|^{\alpha}}.
\end{aligned}
\end{equation}
Set
$z=\frac{y}{|y|^{2}}$.
Applying
(\ref{46})
and
(\ref{48}),
we have
\begin{equation}\label{49}
\begin{aligned}
\int_{\mathbb{R}^{N}}
\frac{|\tilde{\tilde{u}}(y)|^{\frac{2N-\alpha}{N-2s}}}
{|x-y|^{\alpha}}
\mathrm{d}y
=&
\int_{\mathbb{R}^{N}}
\frac{|\tilde{u}
\left(
\frac{y}{|y|^{2}}
\right)|^{\frac{2N-\alpha}{N-2s}}}
{|x-y|^{\alpha}}
\cdot
\frac{1}{|y|^{2N-\alpha}}
\mathrm{d}y
~(\mathrm{by}~(\ref{46}))
\\
=&
\int_{\mathbb{R}^{N}}
\frac{|\tilde{u}
\left(
\frac{y}{|y|^{2}}
\right)|^{\frac{2N-\alpha}{N-2s}}}
{|\frac{x}{|x|^{2}}-\frac{y}{|y|^{2}}|^{\alpha}}
\cdot
\frac{1}{|xy|^{\alpha}\cdot|y|^{2N-\alpha}}
\mathrm{d}y
~(\mathrm{by}~(\ref{48}))\\
=&
\frac{1}{|x|^{\alpha}}
\int_{\mathbb{R}^{N}}
\frac{|\tilde{u}
\left(
\frac{y}{|y|^{2}}
\right)|^{\frac{2N-\alpha}{N-2s}}}
{|\frac{x}{|x|^{2}}-\frac{y}{|y|^{2}}|^{\alpha}}
\cdot
\frac{1}{|y|^{2N}}
\mathrm{d}y\\
=&
\frac{1}{|x|^{\alpha}}
\int_{\mathbb{R}^{N}}
\frac{|\tilde{u}
\left(
z
\right)|^{\frac{2N-\alpha}{N-2s}}}
{|\frac{x}{|x|^{2}}-z|^{\alpha}}
\mathrm{d}z
~(\mathrm{set}~z=\frac{y}{|y|^{2}}).
\end{aligned}
\end{equation}
By using
\eqref{46},
we get
\begin{equation}\label{50}
\begin{aligned}
\frac{
\left|
\tilde{\tilde{u}}
\right|^{\frac{4s-2\theta_{i}}{N-2s}}
\tilde{\tilde{u}}}
{|x|^{\theta_{i}}}
=
\frac{1}{|x|^{N+2s}}
\frac{
\tilde{u}
\left(
\frac{x}{|x|^{2}}
\right)
}
{\left|\frac{x}{|x|^{2}}\right|^{\theta_{i}}},
\end{aligned}
\end{equation}
and
\begin{equation}\label{59}
\begin{aligned}
\frac{\tilde{\tilde{u}}(x)}{|x|^{2s}}
=
\frac{1}{|x|^{N+2s}}
\frac{
\tilde{u}
\left(
\frac{x}{|x|^{2}}
\right)
}
{\left|\frac{x}{|x|^{2}}\right|^{2s}}.
\end{aligned}
\end{equation}
Therefore,
by using
(\ref{47})
and
(\ref{49})
--
(\ref{59}),
we get
\begin{equation*}
\begin{aligned}
(-
\Delta)^{s}
\tilde{\tilde{u}}
-\zeta
\frac{\tilde{\tilde{u}}}{|x|^{2s}}
=&
\left(
\int_{\mathbb{R}^{N}}
\frac{|\tilde{\tilde{u}}|^{\frac{2N-\alpha}{N-2s}}}
{|x-y|^{\alpha}}
\mathrm{d}y
\right)
\left|
\tilde{\tilde{u}}
\right|^{\frac{4s-\alpha}{N-2s}}
\tilde{\tilde{u}}
+
\sum\limits^{k}_{i=1}
\frac{
\left|
\tilde{\tilde{u}}
\right|^{\frac{4s-2\theta_{i}}{N-2s}}
\tilde{\tilde{u}}}
{|x|^{\theta_{i}}},
~\mathrm{in}~\mathbb{R}^{N}\backslash\{0\}.
\end{aligned}
\end{equation*}
\qed

\section{The proof of Theorem \ref{theorem1}}
In this section,
we study the existence of nonnegative solution of problem $(\mathcal{P}_{1})$.
\begin{lemma}\label{lemma18}
Assume that the assumptions of Theorem
\ref{theorem1}
hold.
Then
there exists a
$(PS)_{c^{1}}$
sequence of
$I_{1}$
at level
$c^{1}$,
where
$$0<c^{1}<c^{1,*}=
\min
\left\{
\frac{N+2s-\alpha_{1}}{2(2N-\alpha_{1})}S_{0,\alpha_{1}}^{\frac{2N-\alpha_{1}}{N+2s-\alpha_{1}}}
,
\ldots
,
\frac{N+2s-\alpha_{k}}{2(2N-\alpha_{k})}S_{0,\alpha_{k}}^{\frac{2N-\alpha_{k}}{N+2s-\alpha_{k}}}
,
\frac{s}{N}
S_{0,0} ^{\frac{N}{2s}}
\right\}.$$
\end{lemma}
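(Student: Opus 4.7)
The plan is to verify that $I_{1}$ satisfies the mountain pass geometry, extract a $(PS)_{c^{1}}$ sequence from the mountain pass theorem without compactness, and then bound the minimax level $c^{1}$ strictly above by $c^{1,*}$ via carefully chosen test functions.

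\textbf{Mountain pass geometry.} First I would check that $0$ is a strict local minimum of $I_{1}$. By the Hardy--Littlewood--Sobolev inequality (Lemma \ref{lemma1}) applied with $t=r=\frac{2N}{2N-\alpha_{i}}$ together with the continuous embedding $D^{s,2}(\mathbb{R}^{N})\hookrightarrow L^{2^{*}_{s}}(\mathbb{R}^{N})$, each nonlinear term in $I_{1}$ is controlled by a power of $\|u\|_{D}$ strictly larger than $2$:
\begin{equation*}
\int_{\mathbb{R}^{N}}\int_{\mathbb{R}^{N}}\frac{|u(x)|^{2^{*}_{\alpha_{i}}}|u(y)|^{2^{*}_{\alpha_{i}}}}{|x-y|^{\alpha_{i}}}\,dxdy\leqslant C\|u\|_{D}^{2\cdot 2^{*}_{\alpha_{i}}},\qquad \int_{\mathbb{R}^{N}}|u|^{2^{*}_{s}}\,dx\leqslant C\|u\|_{D}^{2^{*}_{s}}.
\end{equation*}
Since $2\cdot 2^{*}_{\alpha_{i}}>2$ and $2^{*}_{s}>2$, for $\rho>0$ sufficiently small one gets $I_{1}(u)\geqslant\alpha_{0}>0$ for every $u$ with $\|u\|_{D}=\rho$. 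On the other hand, fixing any $u_{0}\not\equiv 0$ and looking at $I_{1}(tu_{0})$ as $t\to\infty$, the nonlinear terms dominate with negative sign and $I_{1}(tu_{0})\to -\infty$, so there is $u_{1}$ with $\|u_{1}\|_{D}>\rho$ and $I_{1}(u_{1})<0$. The mountain pass theorem of Ambrosetti--Rabinowitz in the form which dispenses with the Palais--Smale condition (see Willem \cite{Willem1996}) then yields a $(PS)_{c^{1}}$ sequence for $I_{1}$ with $c^{1}\geqslant\alpha_{0}>0$.

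\textbf{Upper bound on $c^{1}$.} To prove $c^{1}<c^{1,*}$, I would use as test functions the family of extremals $\{U_{\sigma,\alpha_{j}}\}_{\sigma>0}$ constructed in the proof of Lemma \ref{lemma17} (for each index $j=1,\ldots,k$ these realize $S_{0,\alpha_{j}}$) together with the Aubin--Talenti profile $U_{\sigma}$ attaining $S_{0,0}$. For any such test function $w\not\equiv 0$ the map $t\mapsto I_{1}(tw)$ attains a unique positive maximum $t_{w}$, and the definition of $c^{1}$ gives $c^{1}\leqslant \max_{t\geqslant 0}I_{1}(tw)$. When $w=U_{\sigma,\alpha_{j}}$, discarding all nonlinear terms of $I_{1}$ except the $\alpha_{j}$-one produces the single-nonlinearity threshold $\frac{N+2s-\alpha_{j}}{2(2N-\alpha_{j})}S_{0,\alpha_{j}}^{(2N-\alpha_{j})/(N+2s-\alpha_{j})}$; reinserting the other critical terms strictly decreases the maximum because each contributes a strictly positive integral (Lemma \ref{lemma6}(ii) ensures $U_{\sigma,\alpha_{j}}$ lies in every Coulomb--Sobolev space $\mathcal{E}^{s,\alpha_{i},2^{*}_{\alpha_{i}}}(\mathbb{R}^{N})$, and $U_{\sigma,\alpha_{j}}\in L^{2^{*}_{s}}(\mathbb{R}^{N})$ by construction). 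An analogous computation with the Aubin--Talenti extremal shows $\max_{t\geqslant 0}I_{1}(tU_{\sigma})<\frac{s}{N}S_{0,0}^{N/(2s)}$. Picking the test function corresponding to the term that realizes the minimum in $c^{1,*}$ yields $c^{1}<c^{1,*}$.

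\textbf{Main obstacle.} The main difficulty is the strict upper bound: because all the critical terms in $(\mathcal{P}_{1})$ share the same invariance $u\mapsto \tau^{(N-2s)/2}u(\tau\cdot)$, standard Brezis--Nirenberg concentration at a point does not decouple the critical levels, and the cross interactions between different nonlocal terms have to be handled simultaneously. What saves the argument is precisely Lemma \ref{lemma6}, which guarantees that every element of one $\mathcal{E}^{s,\alpha_{j},2^{*}_{\alpha_{j}}}(\mathbb{R}^{N})$ automatically belongs to the others, so that each subtracted integral in $I_{1}(tU_{\sigma,\alpha_{j}})$ is finite and strictly positive, producing a genuine gap between $c^{1}$ and the single-nonlinearity threshold. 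The details follow the scheme of Pucci--Servadei \cite{Pucci2009}, as indicated for the companion Lemma \ref{lemma15}.
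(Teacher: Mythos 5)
Your proposal is correct and follows essentially the route the paper itself intends: the paper states Lemma \ref{lemma18} without proof, relying (as it does explicitly for the companion Lemma \ref{lemma15}) on the standard scheme of \cite{Pucci2009} — mountain pass geometry via Hardy--Littlewood--Sobolev and Sobolev bounds, the mountain pass theorem without the Palais--Smale condition from \cite{Willem1996}, and the strict level estimate obtained by testing with the extremals $U_{\sigma,\alpha_{j}}$ (respectively $U_{\sigma}$) and observing that the additional critical terms strictly lower the single-nonlinearity maximum. Your filling-in of the details, in particular the observation that $c^{1}\leqslant\max_{t\geqslant0}I_{1}(tU_{\sigma,\alpha_{j}})<\frac{N+2s-\alpha_{j}}{2(2N-\alpha_{j})}S_{0,\alpha_{j}}^{\frac{2N-\alpha_{j}}{N+2s-\alpha_{j}}}$ for each $j$ because every subtracted integral is finite and strictly positive, is exactly what is needed.
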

\begin{lemma}\label{lemma13}
Assume that the assumptions of Theorem
\ref{theorem1}
hold.
Then
$$c_{1}^{1}=c^{1}=c_{0}^{1}=\inf_{u\in\mathcal{N}^{1}}I_{1}(u)>0.$$
\end{lemma}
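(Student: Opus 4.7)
The plan is to mirror the two-step scheme of Lemma \ref{lemma11} and Lemma \ref{lemma12}: first establish that $\mathcal{N}^{1}$ is bounded away from zero so that $c_{0}^{1}>0$, then use the fibering map $t\mapsto I_{1}(tu)$ to identify $c_{0}^{1}$ with $c_{1}^{1}$ and with the mountain--pass level $c^{1}$. Throughout write $A_{i}(u)=\int_{\mathbb{R}^{N}}\int_{\mathbb{R}^{N}}\frac{|u(x)|^{2^{*}_{\alpha_{i}}}|u(y)|^{2^{*}_{\alpha_{i}}}}{|x-y|^{\alpha_{i}}}\mathrm{d}x\mathrm{d}y$ and $B(u)=\int_{\mathbb{R}^{N}}|u|^{2^{*}_{s}}\mathrm{d}x$.

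For $c_{0}^{1}>0$: any $u\in\mathcal{N}^{1}$ satisfies $\|u\|_{D}^{2}=\sum_{i=1}^{k}A_{i}(u)+B(u)$. Applying \eqref{10} with $\zeta=0$ and the Sobolev inequality $S_{0,0}$ gives $A_{i}(u)\leqslant S_{0,\alpha_{i}}^{-2^{*}_{\alpha_{i}}}\|u\|_{D}^{2\cdot 2^{*}_{\alpha_{i}}}$ and $B(u)\leqslant S_{0,0}^{-2^{*}_{s}/2}\|u\|_{D}^{2^{*}_{s}}$. Since all exponents $2\cdot 2^{*}_{\alpha_{i}}$ and $2^{*}_{s}$ exceed $2$, this yields a uniform lower bound $\|u\|_{D}\geqslant\rho>0$, exactly as in Case (i)/(ii) of Lemma \ref{lemma11}. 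The identity
\begin{equation*}
I_{1}(u)=I_{1}(u)-\frac{1}{2^{*}_{s}}\langle I_{1}'(u),u\rangle=\left(\frac{1}{2}-\frac{1}{2^{*}_{s}}\right)\|u\|_{D}^{2}+\sum_{i=1}^{k}\left(\frac{1}{2^{*}_{s}}-\frac{1}{2\cdot 2^{*}_{\alpha_{i}}}\right)A_{i}(u),
\end{equation*}
combined with $2\cdot 2^{*}_{\alpha_{i}}>2^{*}_{s}$ (equivalent to $\alpha_{i}<N$), gives $I_{1}(u)\geqslant\bigl(\tfrac{1}{2}-\tfrac{1}{2^{*}_{s}}\bigr)\rho^{2}>0$, whence $c_{0}^{1}>0$.

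For $c_{0}^{1}=c_{1}^{1}=c^{1}$: fix $u\in D^{s,2}(\mathbb{R}^{N})\setminus\{0\}$ and set $g(t)=I_{1}(tu)$. Then $g'(t)/t=\|u\|_{D}^{2}-\sum_{i=1}^{k}t^{2\cdot 2^{*}_{\alpha_{i}}-2}A_{i}(u)-t^{2^{*}_{s}-2}B(u)$ is strictly decreasing from $\|u\|_{D}^{2}>0$ to $-\infty$, so it has a unique zero $t_{u}>0$; this $t_{u}$ is the unique critical point and the strict global maximum of $g$ on $(0,\infty)$, and $t_{u}u\in\mathcal{N}^{1}$. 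Taking the infimum over $u$ in $\max_{t\geqslant0}I_{1}(tu)=I_{1}(t_{u}u)\geqslant c_{0}^{1}$ gives $c_{1}^{1}=c_{0}^{1}$. For $c^{1}\leqslant c_{0}^{1}$, given $v\in\mathcal{N}^{1}$ choose $T>1$ with $I_{1}(Tv)<0$ and use the path $\Upsilon(t)=tTv$; its maximum is $I_{1}(v)$, so $c^{1}\leqslant I_{1}(v)$. For $c^{1}\geqslant c_{0}^{1}$, given any $\Upsilon\in\Gamma^{1}$ the function $\phi(t)=\langle I_{1}'(\Upsilon(t)),\Upsilon(t)\rangle$ is strictly positive for small $t>0$ (nonlinear contributions are higher order in $\|\Upsilon(t)\|_{D}$) while the condition $I_{1}(\Upsilon(1))<0$ forces $t_{\Upsilon(1)}<1$ and therefore $\phi(1)<0$; continuity produces $t^{*}\in(0,1)$ with $\Upsilon(t^{*})\in\mathcal{N}^{1}$, so $\max_{t\in[0,1]}I_{1}(\Upsilon(t))\geqslant c_{0}^{1}$.

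The main obstacle is verifying that the fibering map retains its single--peak shape in the presence of $k+1$ distinct supercritical exponents; this is resolved by observing that $g'(t)/t$ is strictly decreasing because every exponent $2\cdot 2^{*}_{\alpha_{i}}-2$ and $2^{*}_{s}-2$ is strictly positive, and this monotonicity in turn drives both the uniqueness of $t_{u}$ and the sign change of $\phi$ that makes the path-crossing argument work.
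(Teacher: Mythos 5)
Your proof is correct and follows essentially the route the paper takes (implicitly): the paper states Lemma \ref{lemma13} without any proof, relying on the same two-step scheme it carries out for $I_{2}$ in Lemmas \ref{lemma11} and \ref{lemma12} (a uniform lower bound on $\|u\|_{D}$ over $\mathcal{N}^{1}$ via the best constants, then the standard fibering-map/Willem Theorem 4.1--4.2 identification of the Nehari, infimum-of-maxima, and mountain-pass levels), which is precisely what you have written out in detail. The only cosmetic point is that $\phi(t)=\langle I_{1}'(\Upsilon(t)),\Upsilon(t)\rangle$ is merely nonnegative (not strictly positive) at small $t$ where the path may vanish, but the crossing argument goes through unchanged since near any such $t$ a nonzero $\Upsilon(\tau)$ with small norm still gives $\phi(\tau)>0$.
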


The following result implies the non--vanishing of
$(PS)_{c^{1}}$
sequences.
\begin{lemma}\label{lemma19}
Assume that the assumptions of Theorem
\ref{theorem1}
hold.
Let
$\{u_{n}\}$
be a
$(PS)_{c^{1}}$
sequence of
$I_{1}$
with
$c^{1}\in(0,c^{1,*})$,
then
\begin{equation*}
\begin{aligned}
\lim_{n\rightarrow\infty}
\int_{\mathbb{R}^{N}}
|u_{n}|^{2^{*}_{s}}
\mathrm{d}x>0,
\end{aligned}
\end{equation*}
and
\begin{equation*}
\begin{aligned}
\lim_{n\rightarrow\infty}
\int_{\mathbb{R}^{N}}
\int_{\mathbb{R}^{N}}
\frac{|u_{n}(x)|^{2^{*}_{\alpha_{i}}}|u_{n}(y)|^{2^{*}_{\alpha_{i}}}}{|x-y|^{\alpha_{i}}}
\mathrm{d}x
\mathrm{d}y>0,(i=1,\ldots,k).
\end{aligned}
\end{equation*}
\end{lemma}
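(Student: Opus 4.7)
\noindent
\textbf{Proof proposal for Lemma \ref{lemma19}.}
The plan is to mirror the case analysis of Lemma \ref{lemma16}, but with the roles of the Hardy--Sobolev terms now played by the Sobolev critical term $\int|u_n|^{2^{*}_s}$ and of the single Hartree term by the $k$ Hartree terms of exponents $\alpha_1,\ldots,\alpha_k$. First I would record that any $(PS)_{c^{1}}$ sequence is bounded in $D^{s,2}(\mathbb{R}^N)$: since $\alpha_i<N$ forces $2\cdot 2^{*}_{\alpha_i}>2^{*}_s>2$, the combination $I_1(u_n)-\tfrac{1}{2^{*}_s}\langle I_1'(u_n),u_n\rangle$ has nonnegative contributions from the Hartree pieces and yields
\[
c^{1}+o(\|u_n\|_D)\;\geqslant\;\Bigl(\tfrac12-\tfrac1{2^{*}_s}\Bigr)\|u_n\|_D^{2},
\]
so $\|u_n\|_D\leqslant C$.

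Next, I argue by contradiction and split into $k+1$ cases. Case $0$ assumes $\lim_n\int|u_n|^{2^{*}_s}\,dx=0$. Applying Lemma \ref{lemma1} with $t=r=\frac{2N}{2N-\alpha_i}$ to $f=h=|u_n|^{2^{*}_{\alpha_i}}$, and using the identity $2^{*}_{\alpha_i}\cdot\tfrac{2N}{2N-\alpha_i}=2^{*}_s$, gives
\[
\int_{\mathbb{R}^{N}}\!\int_{\mathbb{R}^{N}}\frac{|u_n(x)|^{2^{*}_{\alpha_i}}|u_n(y)|^{2^{*}_{\alpha_i}}}{|x-y|^{\alpha_i}}\,dx\,dy\;\leqslant\;C(N,\alpha_i)\,\|u_n\|_{L^{2^{*}_s}}^{2\cdot 2^{*}_{\alpha_i}}\;\longrightarrow\;0
\]
for every $i=1,\ldots,k$. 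Substituting into $\langle I_1'(u_n),u_n\rangle=o(1)$ forces $\|u_n\|_D^{2}\to 0$, whence $I_1(u_n)\to 0$, contradicting $c^{1}>0$.

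In case $j\in\{1,\ldots,k\}$ I assume $\lim_n\int\!\int|u_n(x)|^{2^{*}_{\alpha_j}}|u_n(y)|^{2^{*}_{\alpha_j}}/|x-y|^{\alpha_j}\,dx\,dy=0$ and invoke the endpoint refined Sobolev inequality (Lemma \ref{lemma2}) with $\alpha=\alpha_j$:
\[
\|u_n\|_{L^{2^{*}_s}}\;\leqslant\;C_1\,\|u_n\|_D^{\frac{(N-\alpha_j)(N-2s)}{N(N+2s-\alpha_j)}}\left(\int_{\mathbb{R}^N}\!\int_{\mathbb{R}^N}\frac{|u_n(x)|^{2^{*}_{\alpha_j}}|u_n(y)|^{2^{*}_{\alpha_j}}}{|x-y|^{\alpha_j}}\,dx\,dy\right)^{\frac{s(N-2s)}{N(N+2s-\alpha_j)}}.
\]
Since $\|u_n\|_D$ is bounded and the Hartree factor on the right tends to zero, $\|u_n\|_{L^{2^{*}_s}}\to 0$, which reduces the situation to Case $0$ and yields the same contradiction. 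Note that, as in Case 4 of Lemma \ref{lemma16}, the sharp upper bound $c^{1}<c^{1,*}$ is not used here; the positivity $c^{1}>0$ is enough.

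The main obstacle, as highlighted in Remark 1.2, is the presence of $k$ distinct Hartree critical nonlinearities: a priori, a $(PS)$ sequence could vanish along one Hartree channel while remaining concentrated along another or along the Sobolev one. The resolution is the bridging role played by Lemma \ref{lemma2}, together with the fact that the fractional Coulomb--Sobolev embeddings from Lemma \ref{lemma6} make $\|u_n\|_{\mathcal{E},\alpha_i}$ bounded for every $i$ as soon as $\|u_n\|_D$ is: any single vanishing Hartree quantity pushes $\|u_n\|_{L^{2^{*}_s}}$ to zero via Lemma \ref{lemma2}, after which Hardy--Littlewood--Sobolev collapses all remaining Hartree terms. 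This cascading is what allows a uniform treatment of the $k+1$ critical channels.
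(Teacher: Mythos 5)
Your proposal is correct and follows essentially the same route as the paper: assuming the Sobolev term vanishes, you collapse all Hartree terms via Hardy--Littlewood--Sobolev and contradict $c^{1}>0$; assuming any single Hartree term vanishes, you use the endpoint refined Sobolev inequality (Lemma \ref{lemma2}, with membership in the relevant Coulomb--Sobolev spaces supplied by Lemma \ref{lemma6}) to kill $\|u_n\|_{L^{2^{*}_s}}$ and reduce to the first case. Your side remarks (boundedness of the $(PS)$ sequence and the fact that only $c^{1}>0$, not the upper bound $c^{1,*}$, is needed here) are also accurate.
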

\noindent
{\bf Proof.}
Let
$\{u_{n}\}$
be a
$(PS)_{c^{1}}$
sequence of
$I_{1}$
with
$c^{1}\in(0,c^{1,*})$,
It's easy to see that
$\{u_{n}\}$
is uniformly bounded in
$D^{s,2}(\mathbb{R}^{N})$.
The proof of this Lemma is divided into three cases:

\noindent
(1)
$
\lim_{n\rightarrow\infty}
\int_{\mathbb{R}^{N}}
\int_{\mathbb{R}^{N}}
\frac{|u_{n}(x)|^{2^{*}_{\alpha_{1}}}|u_{n}(y)|^{2^{*}_{\alpha_{1}}}}{|x-y|^{\alpha_{1}}}
\mathrm{d}x
\mathrm{d}y>0
$;

\noindent
(2)
$
\lim_{n\rightarrow\infty}
\int_{\mathbb{R}^{N}}
\int_{\mathbb{R}^{N}}
\frac{|u_{n}(x)|^{2^{*}_{\alpha_{j}}}|u_{n}(y)|^{2^{*}_{\alpha_{j}}}}{|x-y|^{\alpha_{j}}}
\mathrm{d}x
\mathrm{d}y>0,
$
for
$j=2,\ldots,k$;

\noindent
(3)
$
\lim_{n\rightarrow\infty}
\int_{\mathbb{R}^{N}}
|u_{n}|^{2^{*}_{s}}
\mathrm{d}x>0.
$

\noindent
{\bf Case 1.}
Suppose that
\begin{equation}\label{51}
\begin{aligned}
\lim_{n\rightarrow\infty}
\int_{\mathbb{R}^{N}}
\int_{\mathbb{R}^{N}}
\frac{|u_{n}(x)|^{2^{*}_{\alpha_{1}}}|u_{n}(y)|^{2^{*}_{\alpha_{1}}}}
{|x-y|^{\alpha_{1}}}
\mathrm{d}x
\mathrm{d}y=0.
\end{aligned}
\end{equation}
Since
$\{u_{n}\}$
is uniformly bounded in
$D^{s,2}(\mathbb{R}^{N})$,
there exists a constant
$0<C<\infty$
such that
$\|u_{n}\|_{D}\leqslant C$.
By using (\ref{51})
and the definition of
fractional Coulomb--Sobolev space,
we obtain
$u_{n}\in \mathcal{E}^{s,\alpha_{1},2^{*}_{\alpha_{1}}}(\mathbb{R}^{N})$.
Applying Lemma \ref{lemma2} and (\ref{51}),
we have
\begin{equation}\label{52}
\begin{aligned}
&
\lim_{n\rightarrow\infty}
\|u_{n}\|_{L^{2^{*}_{s}}(\mathbb{R}^{N})}\\
\leqslant&
C
\left(
\lim_{n\rightarrow\infty}
\int_{\mathbb{R}^{N}}
\int_{\mathbb{R}^{N}}
\frac{|u_{n}(x)|^{2^{*}_{\alpha_{1}}}|u_{n}(y)|^{2^{*}_{\alpha_{1}}}}{|x-y|^{\alpha_{1}}}
\mathrm{d}x
\mathrm{d}y
\right)
^{\frac{s(N-2s)}{N(N+2s-\alpha_{1})}}
=0.
\end{aligned}
\end{equation}
Combining Hardy--Littlewood--Sobolev inequality
and
(\ref{52}),
for all
$i=2,\ldots,k$,
we know
\begin{equation}\label{53}
\begin{aligned}
\lim_{n\rightarrow\infty}
\int_{\mathbb{R}^{N}}
\int_{\mathbb{R}^{N}}
\frac{|u_{n}(x)|^{2^{*}_{\alpha_{i}}}|u_{n}(y)|^{2^{*}_{\alpha_{i}}}}{|x-y|^{\alpha_{i}}}
\mathrm{d}x
\mathrm{d}y
\leqslant&
C
\lim_{n\rightarrow\infty}
\|u_{n}\|_{L^{2^{*}_{s}}(\mathbb{R}^{N})}^{2\cdot2^{*}_{\alpha_{i}}}
=0.
\end{aligned}
\end{equation}
According to
(\ref{51})
--
(\ref{53})
and the definition of
$(PS)_{c^{1}}$
sequence
,
we obtain
$$
c^{1}
+
o(1)
=
\frac{1}{2}
\|u_{n}\|_{D}^{2},$$
and
$$o(1)
=
\|u_{n}\|_{D}^{2},
$$
these imply that
$c^{1}=0$,
which contradicts as $0<c^{1}$.

\noindent
{\bf Case 2.}
From Case 1,
we have
$u_{n}\in \mathcal{E}^{s,\alpha_{1},2^{*}_{\alpha_{1}}}(\mathbb{R}^{N})$.
Applying the result of (ii) in Lemma \ref{lemma6},
we know that
$u_{n}\in \bigcap_{i=2}^{k}
\mathcal{E}^{s,\alpha_{i},2^{*}_{\alpha_{i}}}(\mathbb{R}^{N})$.
Similar to Case 1,
for all
$i=2,\ldots,k$,
we prove that
$$\lim_{n\rightarrow\infty}
\int_{\mathbb{R}^{N}}
\int_{\mathbb{R}^{N}}
\frac{|u_{n}(x)|^{2^{*}_{\alpha_{i}}}|u_{n}(y)|^{2^{*}_{\alpha_{i}}}
}{|x-y|^{\alpha_{i}}}
\mathrm{d}x
\mathrm{d}y>0.$$

\noindent
{\bf Case 3.}
Suppose that
\begin{equation}\label{54}
\begin{aligned}
\lim_{n\rightarrow\infty}
\int_{\mathbb{R}^{N}}
|u_{n}|^{2^{*}_{s}}
\mathrm{d}x
=0,
\end{aligned}
\end{equation}
By using Lemma \ref{lemma1}
and
(\ref{54}),
for all
$i=1,\ldots,k$,
we have
\begin{equation}\label{55}
\begin{aligned}
\lim_{n\rightarrow\infty}
\int_{\mathbb{R}^{N}}
\int_{\mathbb{R}^{N}}
\frac{|u_{n}(x)|^{2^{*}_{\alpha_{i}}}|u_{n}(y)|^{2^{*}_{\alpha_{i}}}}{|x-y|^{\alpha_{i}}}
\mathrm{d}x
\mathrm{d}y
\leqslant&
C
\lim_{n\rightarrow\infty}
\|u_{n}\|_{L^{2^{*}_{s}}(\mathbb{R}^{N})}
^{2\cdot2^{*}_{\alpha_{i}}}=0.
\end{aligned}
\end{equation}
Applying
(\ref{54})
and
(\ref{55}),
we get
$$
c^{1}
+
o(1)
=
\frac{1}{2}
\|u_{n}\|_{D}^{2},$$
and
$$o(1)
=
\|u_{n}\|_{D}^{2},
$$
these imply that
$c^{1}=0$,
which contradicts as $0<c^{1}$.
\qed

We are now ready to prove the existence of nonnegative solution for problem
$(\mathcal{P}_{1})$.

\noindent
{\bf Proof of Theorem \ref{theorem1}:}
\noindent
{\bf Step 1.}
Since
$\{u_{n}\}$
is a bounded sequence in
$D^{s,2}(\mathbb{R}^{N})$,
up to a subsequence,
we can assume that
\begin{align*}
&u_{n}\rightharpoonup u
~
\mathrm{in}
~
D^{s,2}(\mathbb{R}^{N}),~
u_{n}\rightarrow u
~
\mathrm{a.e. ~in}
~
\mathbb{R}^{N},\\
&u_{n}\rightarrow u
~
\mathrm{in}
~
L^{r}_{loc}(\mathbb{R}^{N})
~
\mathrm{for~all}
~
r\in[1,2^{*}_{s}).
\end{align*}
According to
Lemma
\ref{lemma4},
Lemma
\ref{lemma5}
and
Lemma
\ref{lemma19},
there exists
$C>0$
such that
$$
\|u_{n}\|_{\mathcal{L}^{2,N-2s}(\mathbb{R}^{N})}\geqslant C>0.
$$
On the other hand,
since the sequence is bounded in
$D^{s,2}(\mathbb{R}^{N})$
and
$D^{s,2}(\mathbb{R}^{N})\hookrightarrow L^{2^{*}_{s}}(\mathbb{R}^{N})\hookrightarrow \mathcal{L}^{2,N-2s}(\mathbb{R}^{N})$,
we have
$$
\|u_{n}\|_{\mathcal{L}^{2,N-2s}(\mathbb{R}^{N})}\leqslant C,
$$
for some
$C>0$
independent of $n$.
Hence, there exists a positive constant which we denote again by $C$ such
that for any $n$ we obtain
$$
C
\leqslant
\|u_{n}\|_{\mathcal{L}^{2,N-2s}(\mathbb{R}^{N})}\leqslant C^{-1}.
$$
So we may find
$\sigma_{n} > 0$
and
$x_{n}\in \mathbb{R}^{N}$
such that
$$
\frac{1}{\sigma_{n}^{2s}}
\int_{B(x_{n},\sigma_{n})}
|u_{n}(y)|^{2}
\mathrm{d}y
\geqslant
\|u_{n}\|_{\mathcal{L}^{2,N-2s}(\mathbb{R}^{N})}^{2}
-
\frac{C}{2n}
\geqslant
C_{7}>0.
$$
Let
$\bar{v}_{n}(x)=\sigma_{n}^{\frac{N-2s}{2}}u_{n}(x_{n}+\sigma_{n}x)$.
We could verify that
$$I_{1}(\bar{v}_{n})=I(u_{n})\rightarrow c^{1},~
\langle I^{'}_{1}(\bar{v}_{n}),\varphi\rangle
~\mathrm{as}~n\rightarrow\infty.$$
It's obviously that
$\{\bar{v}_{n}\}$
is uniformly bounded in
$D^{s,2}(\mathbb{R}^{N})$.
Thus there exists
$v$
such that
\begin{align*}
&\bar{v}_{n}\rightharpoonup \bar{v}
~
\mathrm{in}
~
D^{s,2}(\mathbb{R}^{N}),~
\bar{v}_{n}\rightarrow \bar{v}
~
\mathrm{a.e. ~in}
~
\mathbb{R}^{N},\\
&\bar{v}_{n}\rightarrow \bar{v}
~
\mathrm{in}
~
L^{r}_{loc}(\mathbb{R}^{N})~~\mathrm{for~all~}r\in[1,2^{*}_{s} ).
\end{align*}
Then
$
\int_{B(0,1)}
|\bar{v}_{n}(y)|^{2}
\mathrm{d}y
\geqslant
C_{7}>0$.
As a result,
$\bar{v}\not\equiv0$.

\noindent
{\bf Step 2.}
Similar to \eqref{39},
we get
\begin{equation*}
\begin{aligned}
\langle I^{'}_{1}(\bar{v}),\varphi\rangle=0.
\end{aligned}
\end{equation*}
Similar to the proof of Theorem \ref{theorem2},
we know that
$\bar{v}_{n}\rightarrow \bar{v}$
strongly in
$D^{s,2}(\mathbb{R}^{N})$,
and
$I_{1}(\bar{v})=c^{1}$.
Moreover,
we can choose
$\bar{v}\geqslant0$.
By using the fractional Kelvin transformation
\begin{equation*}
\begin{aligned}
\bar{\bar{v}}(x)
=
\frac{1}{|x|^{N-2s}}
\bar{v}
\left(
\frac{x}{|x|^{2}}
\right).
\end{aligned}
\end{equation*}
We obtain
\begin{equation*}
\begin{aligned}
(-
\Delta)^{s}
\bar{\bar{v}}
=&
\sum\limits^{k}_{i=1}
\left(
\int_{\mathbb{R}^{N}}
\frac{|\bar{\bar{v}}|^{\frac{2N-\alpha_{i}}{N-2s}}}
{|x-y|^{\alpha_{i}}}
\mathrm{d}y
\right)
\left|
\bar{\bar{v}}
\right|^{\frac{4s-\alpha_{i}}{N-2s}}
\bar{\bar{v}}
+
\left|
\bar{\bar{v}}
\right|^{\frac{4s}{N-2s}}
\bar{\bar{v}}
,
~\mathrm{in}~\mathbb{R}^{N}\backslash\{0\}.
\end{aligned}
\end{equation*}
\qed
\section*{Open Problem}
During the preparation of the manuscript we faced one problem which is worth to be tackled in forthcoming investigation.

We want to generalize the study of problem $(\mathcal{P}_{1})$ to the following problem:
$$
(-\Delta)^{s} u
-\zeta\frac{ u}{|x|^{2s}}
=
\sum_{i=1}^{k}
\left(
\int_{\mathbb{R}^{N}}
\frac{|u|^{2^{*}_{\alpha_{i}}}}{|x-y|^{\alpha_{i}}}
\mathrm{d}y
\right)
|u|^{2^{*}_{\alpha_{i}}-2}u
+
|u|^{2^{*}_{s}-2}u
,
\mathrm{~in~}
\mathbb{R}^{N},
\eqno(\mathcal{P}_{4})
$$
where
$N\geqslant3$,
$s\in(0,1)$,
$\zeta\in
\left(
0,4^{s}\frac{\Gamma(\frac{N+2s}{4})}{\Gamma(\frac{N-2s}{4})}
\right)$
and
$0<\alpha_{1}<\alpha_{2}<\cdots<\alpha_{k}<N$
($k\in \mathbb{N}$,
$2\leqslant k<\infty$).
If
$\zeta=0$,
then problem $(\mathcal{P}_{4})$
goes back to
problem $(\mathcal{P}_{1})$.
However,
they are very different from each other.

\end{document}